\title{Report on locally finite triangulated categories}
\author{Henning Krause}
\address{Henning Krause\\ Fakult\"at f\"ur Mathematik\\
Universit\"at Bielefeld\\ D-33501 Bielefeld\\ Germany.}
\email{hkrause@math.uni-bielefeld.de}
\newtheorem{lem}{Lemma}[section]
\newtheorem{prop}[lem]{Proposition}
\newtheorem{cor}[lem]{Corollary}
\newtheorem{thm}[lem]{Theorem}
\theoremstyle{remark}
\newtheorem{rem}[lem]{Remark}
\theoremstyle{definition}
\newtheorem{exm}[lem]{Example}
\numberwithin{equation}{section}
\renewcommand{\mod}{\operatorname{mod}\nolimits}
\DeclareMathOperator*{\colim}{colim}
\newcommand{\rad}{\operatorname{rad}\nolimits}
\newcommand{\Rad}{\operatorname{Rad}\nolimits}
\newcommand{\Irr}{\operatorname{Irr}\nolimits}
\newcommand{\add}{\operatorname{add}\nolimits}
\newcommand{\id}{\operatorname{id}\nolimits}
\newcommand{\Mod}{\operatorname{Mod}\nolimits}
\newcommand{\umod}{\operatorname{\underline{mod}}\nolimits}
\newcommand{\uEnd}{\operatorname{\underline{End}}\nolimits}
\newcommand{\End}{\operatorname{End}\nolimits}
\newcommand{\Hom}{\operatorname{Hom}\nolimits}
\newcommand{\RHom}{\operatorname{\mathbf{R}Hom}\nolimits}
\renewcommand{\Im}{\operatorname{Im}\nolimits}
\newcommand{\Ker}{\operatorname{Ker}\nolimits}
\newcommand{\Coker}{\operatorname{Coker}\nolimits}
\newcommand{\coh}{\operatorname{coh}\nolimits}
\renewcommand{\dim}{\operatorname{dim}\nolimits}
\newcommand{\Ext}{\operatorname{Ext}\nolimits}
\newcommand{\Tr}{\operatorname{Tr}\nolimits}
\newcommand{\uHom}{\operatorname{\underline{Hom}}\nolimits}
\newcommand{\Thick}{\operatorname{Thick}\nolimits}
\newcommand{\tor}{\operatorname{tor}\nolimits}
\newcommand{\cox}{\operatorname{cox}\nolimits}
\newcommand{\NC}{\operatorname{NC}\nolimits}
\newcommand{\MCM}{\operatorname{MCM}\nolimits}
\newcommand{\uMCM}{\operatorname{\underline{MCM}}\nolimits}
\newcommand{\Ab}{\mathsf{Ab}}
\newcommand{\op}{\mathrm{op}}
\newcommand{\inc}{\mathrm{inc}}
\newcommand{\can}{\mathrm{can}}
\newcommand{\lto}{\longrightarrow}
\newcommand{\xto}{\xrightarrow}
\def\a{\alpha}
\def\b{\beta}
\def\e{\varepsilon}
\def\d{\delta}
\def\g{\gamma}
\def\p{\phi}
\def\s{\sigma}
\def\t{\tau}
\def\De{\varDelta}
\def\Ga{\varGamma}
\def\Si{\varSigma}
\def\Om{\varOmega}
\def\A{{\mathsf A}}
\def\C{{\mathsf C}}
\def\Sc{{\mathsf S}}
\def\X{{\mathsf X}}
\def\T{{\mathsf T}}
\def\U{{\mathsf U}}
\def\V{{\mathsf V}}
\def\Oc{{\mathcal O}}
\def\bfA{{\mathbf A}}
\def\bfD{{\mathbf D}}
\def\bfK{{\mathbf K}}
\def\bfT{{\mathbf T}}
\def\bbP{{\mathbb P}}
\def\bbR{{\mathbb R}}
\def\bbZ{{\mathbb Z}}
\def\frp{{\mathfrak p}}
\begin{document}

\begin{abstract}
The basic properties of locally finite triangulated categories are
discussed. The focus is on Auslander--Reiten theory and the lattice of
thick subcategories.
\end{abstract}

\keywords{Triangulated category, derived category, locally noetherian,
  locally finite, thick subcategory, Auslander--Reiten theory}

\subjclass[2010]{18E30(primary), 16E35, 16G70}
\maketitle
\setcounter{tocdepth}{1}
\tableofcontents

\section{Introduction}

This is a report on a particular class of triangulated categories. A
triangulated category $\T$ is said to be \emph{locally finite} if
every cohomological functor from $\T$ or its opposite category
$\T^\op$ into the category of abelian groups is a direct sum of
representable functors.

We present a number of basic results for such triangulated
categories. Some of these results seem to be new, but we include also
results which are variations or generalisations of known results. Thus
our aim is to provide the foundations for studying the locally finite
triangulated categories.

A basic tool for understanding a triangulated category $\T$ is the
collection of representable functors $\Hom_\T(-,X)\colon\T^\op\to\Ab$
where $X$ runs through the objects of $\T$. We show that $\T$ is
locally finite if and only if each representable functor is of finite
length. This property justifies the term `locally finite' which is due
to Xiao and Zhu \cite{XZ} in the triangulated context and goes
back to Gabriel \cite{Ga1962}.

An important thread in the study of locally finite triangulated
categories is the use of Auslander--Reiten theory. The principal idea
is to analyse for each object $X$ in $\T$ the radical filtration
\[\ldots\subseteq\Rad^{2}_\T(-,X)\subseteq
\Rad^{1}_\T(-,X)\subseteq\Rad^{0}_\T(-,X)=\Hom_\T(-,X)
\]
which is finite when $\T$ is locally finite. Some of this information
is encoded in the Auslander--Reiten quiver of $\T$ which can be
described fairly explicitly.

Another intriguing invariant of a triangulated category is the lattice
of thick subcategories. Assuming locally finiteness, we show that the
inclusion of each thick subcategory admits a left and a right
adjoint. In fact, the lattice has interesting symmetries and is even
finite if the category is finitely generated.

The results presented here are most complete when the category is
\emph{simply connected}, that is, the Auslander--Reiten quiver is
connected and contains no oriented cycle. For instance, the lattice of
thick subcategories is in this case isomorphic to the lattice of
non-crossing partitions associated to some diagram of Dynkin type.

Using covering theory, the study of a general locally finite
triangulated category can often be reduced to the simply connected
case. For this direction we refer to recent work of Amiot
\cite{Am2007} and K\"ohler \cite{Ko}.

\subsection*{Acknowledgements} 
It is a pleasure to thank numerous colleagues for interesting
discussions and helpful comments on the subject of this work. Let me
mention explicitly Apostolos Beligiannis, Otto Kerner, Claudia
K\"ohler, Shiping Liu, Claus Ringel, Jan \v{S}\v{t}ov\'i\v{c}ek, Hugh
Thomas, and Dieter Vossieck. I am also grateful to an anonymous
referee for many helpful suggestions for improving the exposition.

\section{Locally noetherian triangulated categories}

In this section, locally noetherian and locally finite triangulated
categories are introduced. We provide various characterisations and a
host of examples. Then we establish the existence of adjoints for
inclusions of thick subcategories.

Throughout this work let $\T$ denote a triangulated category with
suspension $\Si$.

\subsection*{The abelianisation of a triangulated category}
Following Freyd \cite[\S3]{Fr1966} and Verdier \cite[II.3]{V}, we
consider the \emph{abelianisation} $\bfA(\T)$ of a triangulated
category $\T$ which is by definition the abelian category consisting
of all additive functors $F\colon\T^\op\to\Ab$ into the category of
abelian groups that fit into an exact sequence
\[\Hom_\T(-,X)\lto\Hom_\T(-,Y)\lto F\lto 0.\]
The fully faithful Yoneda functor $H\colon\T\to \bfA(\T)$ taking an
object $X$ to the representable functor $\Hom_\T(-,X)$ is the
universal cohomological functor starting in $\T$, that is, each
cohomological functor $\T\to\A$ to an abelian category $\A$ factors
essentially uniquely through $H$. Observe that the functor taking
$\Hom_\T(-,X)$ to $\Hom_\T(X,-)$ induces an equivalence
\begin{equation}\label{eq:dual}
\bfA(\T)^\op\stackrel{\sim}\lto\bfA(\T^\op).
\end{equation}
This is an immediate consequence of the universal property of the Yoneda functor.

\subsection*{Locally noetherian triangulated categories}
Given an essentially small triangulated category $\T$, we use its
abelianisation to formulate a useful finiteness condition; see also
\cite{Be2000}.  We say that $\T$ is \emph{locally
  noetherian}\footnote{The terminology refers to the equivalent fact
  that the abelian category of additive functors $\T^\op\to\Ab$ is
  locally noetherian in the sense of \cite[II.4]{Ga1962}.} if the
equivalent conditions of the following theorem are satisfied.

\begin{thm}\label{th:fin}
For an essentially small triangulated category $\T$ the following
conditions are equivalent.
\begin{enumerate}
\item Every cohomological functor $\T^\op\to\Ab$ into the category of
  abelian groups is a direct sum of representable functors.
\item Every object in $\T$ is a finite coproduct of indecomposable
objects with local endomorphism rings, and for every sequence
$X_1\xto{\p_1} X_2\xto{\p_2} X_3\xto{\p_3} \ldots$ of non-isomorphisms
between indecomposable objects there exists some number $n$ such that
$\p_n\ldots\p_2\p_1=0$.
\item Idempotents in $\T$ split and every object of the abelianization
  $\bfA(\T)$ is noetherian, that is,  every ascending chain of
  subobjects in  $\bfA(\T)$ eventually stabilises.
\end{enumerate}
\end{thm}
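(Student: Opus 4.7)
The plan is to establish the cyclic implications $(1) \Rightarrow (3) \Rightarrow (2) \Rightarrow (1)$, treating $\bfA(\T)$ as the category of finitely presented functors inside the Grothendieck category $\mathrm{Mod}(\T)$ of all additive functors $\T^\op\to\Ab$ and exploiting that the cohomological functors are exactly the flat objects there (filtered colimits of representables).

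For $(1)\Rightarrow(3)$, I would first split idempotents: given $e=e^2\colon X\to X$, the representable $\Hom_\T(-,X)$ splits as the direct sum of the cohomological subfunctors $\Im H(e)$ and $\Im H(1-e)$; assumption (1) writes each as a coproduct of representables, and tracking where $1_X$ sits (in $F(X)$, which is a finite direct sum of abelian groups) together with Yoneda forces $\Im H(e)$ to be a single representable $\Hom_\T(-,Y)$, giving a splitting $e=(X\to Y\to X)$. Local noetherianity of $\bfA(\T)$ is then immediate: any ascending chain of subobjects of a finitely presented $F$ has union a cohomological subfunctor, hence by (1) a coproduct of representables; embedded in a finitely presented object, it must already be finitely generated, so the chain stabilises.

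For $(3)\Rightarrow(2)$, Fitting's lemma in the noetherian abelian category $\bfA(\T)$, applied to the endomorphism of $H(X)$ induced by any $f\in\End_\T(X)$, produces a decomposition $H(X)=\Im f^n\oplus \Ker f^n$ for $n\gg 0$. Since idempotents in $\End_\T(X)$ split, one obtains iteratively a finite Krull--Schmidt decomposition $X=\bigoplus X_i$ with local endomorphism rings. For the chain condition on sequences of non-isomorphisms $\varphi_n\colon X_n\to X_{n+1}$ between indecomposables, I would consider the ascending chain of subfunctors $\Ker H(\varphi_n\cdots\varphi_1)\subseteq H(X_1)$; noetherianity of $H(X_1)$ gives stabilisation, and locality of $\End(X_1)$ together with the fact that each $\varphi_i$ lies in the radical forces the eventual composite to vanish.

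The hard step is $(2)\Rightarrow(1)$. Given a cohomological $F\colon\T^\op\to\Ab$, I would write $F$ as a filtered colimit of representables along its category of elements, and then construct a decomposition of $F$ into indecomposable representables by a transfinite Zorn argument: pick a nonzero element $x\in F(X)$ with $X$ indecomposable (using Krull--Schmidt from (2)) and $x$ chosen \emph{minimal} in the radical filtration (here the second part of (2) guarantees the radical filtration on $\Hom_\T(-,X)$ terminates, so a minimal nonzero generator exists); argue that the induced map $\Hom_\T(-,X)\to F$ is a split monomorphism, split off the summand, and iterate. The principal obstacle is precisely this splitting lemma: one must show that any map from a representable to $F$ representing a minimal element has no nontrivial endomorphism obstruction, which is where the combination of local endomorphism rings and the Harada--Sai type vanishing in (2) enters in a non-formal way.
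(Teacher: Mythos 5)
Your outline diverges from the paper's proof (which establishes $(1)\Leftrightarrow(2)$ by quoting the several-object version of Bass's Theorem~P and $(1)\Leftrightarrow(3)$ via the identification of cohomological functors with fp-injective modules), and each of your three implications has a genuine gap. In $(1)\Rightarrow(3)$, the union of an ascending chain of subobjects of a finitely presented functor is \emph{not} in general a cohomological subfunctor: cohomological means flat, and a subfunctor of a representable need not be flat (take the constant chain at $\Rad_\T(-,X)$ for $X$ indecomposable non-projective in $\bfA(\T)$). The paper avoids this by working with injective envelopes of fp-injectives and observing that the \emph{cokernel} of a monomorphism between cohomological functors is cohomological by the snake lemma.

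The implication $(3)\Rightarrow(2)$ is where the triangulated structure must do real work, and your sketch never uses it. Fitting's lemma requires both chain conditions: for a noetherian, non-artinian object the decomposition $H(X)=\Im f^n\oplus\Ker f^n$ fails (multiplication by $2$ on $\bbZ$ over $\bbZ$), so noetherianity of $\bfA(\T)$ alone does not yield local endomorphism rings. Likewise, stabilisation of the chain $\Ker H(\p_n\cdots\p_1)$ does not force the composite to vanish: for finitely generated free $k[[t]]$-modules with each $\p_i$ multiplication by $t$, all kernels are zero, all maps are radical non-isomorphisms between indecomposables with local endomorphism rings, and no composite vanishes. Since your argument would apply verbatim to that (non-triangulated) noetherian additive category, it cannot be correct; one needs the specifically triangulated fact that flat equals fp-injective (equivalently, that a morphism inducing a monomorphism of representables is a split monomorphism), which is how the paper passes from (3) to (1) and only then to (2). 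Finally, in $(2)\Rightarrow(1)$ you correctly identify the splitting lemma for a minimal generator as the crux, but you leave it unproved; this is exactly the hard direction of Bass's theorem, which the paper delegates to \cite[Theorem~B.12]{JL}. Acknowledging an obstacle is not the same as overcoming it, so as it stands none of the three implications is established.
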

\begin{proof}
  We view the additive category $\T$ as a \emph{ring with several
    objects} and think of additive functors $\T^\op\to\Ab$ as
  $\T$-modules. Note that a $\T$-module is flat if and only if it is a
  cohomological functor; see \cite[Lemma~2.7]{Kra2000}.  Bass has
  characterised the rings for which every flat module is
  projective. This can be generalised to modules over rings with
  several objects, see \cite[Theorem~B.12]{JL}, and yields the
  equivalence of conditions (1) and (2).

Recall that a module $M$ is \emph{fp-injective} if $\Ext^1(-,M)$
vanishes on all finitely presented modules. Note that $\T$ as a ring
with several objects is \emph{noetherian} (that is, each representable
functor $\Hom_\T(-,X)$ satisfies the ascending chain condition on
subfunctors) if and only if every fp-injective $\T$-module is
injective; see \cite[Theorem~B.17]{JL}. The fp-injective $\T$-modules
are precisely the cohomological functors $\T^\op\to\Ab$, by
\cite[Lemma~2.7]{Kra2000}.

Suppose that (1) holds and fix an fp-injective $\T$-module $M$. Choose
an injective envelope $\p \colon M\to Q$. The snake lemma shows that
the cokernel $\Coker\p$ is cohomological. Thus $\Coker\p$ is a direct
sum of representable functors and therefore projective; in particular
$\p$ splits. It follows that $M$ is injective, and therefore $\T$ is
noetherian. It remains to show that $\T$ is idempotent complete. But
this is clear because a direct summand of a representable functor is
cohomological and therefore representable.  Thus (3) holds.

Now suppose that (3) holds.  Thus the ascending chain condition holds
for chains of finitely presented submodules of modules of the form
$\Hom_\T(-,X)$. This implies the ascending chain conditions for
arbitrary submodules, since each submodule is a union of finitely
generated submodules and each finitely generated submodule of a
finitely presented one is again finitely presented. It follows that
$\T$ is noetherian.

Fix a flat $\T$-module $M$ and choose an epimorphism $\p\colon P\to M$
such that $P$ is projective. The snake lemma shows that the kernel
$\Ker\p$ is cohomological. Thus $\Ker \p$ is fp-injective and
therefore injective; in particular $\p$ splits. It follows that $M$ is
projective, and therefore a direct sum of finitely generated
projective modules; see \cite[Corollary~B.13]{JL}. The finitely
generated projective modules are precisely the representable functors
since $\T$ has split idempotents. Thus (1) holds.
\end{proof}

\begin{exm}\label{ex:noeth}
Fix a field $k$ and denote by $\A$ the category of $k$-linear
representations of the quiver
\[\Ga\colon\quad 1\stackrel{}\lto 2\stackrel{}\lto 3
\stackrel{}\lto 4\stackrel{}\lto \cdots\] that are finite dimensional
and have finite support.  Then the bounded derived category
$\bfD^b(\A)$ is locally noetherian but its opposite category is not.

Indeed, each object in $\bfD^b(\A)$ decomposes into a finite coproduct
of indecomposable objects with local endomorphism rings. The
indecomposable objects are isomorphic to complexes concentrated in a
single degree (thus of the form $X[i]$ with $X\in\A$
and $i\in\bbZ$) since $\Ext_\A^p(-,-)=0$ for $p>1$.

Given a sequence $X_1,\ldots,X_r$ of objects in $\A$ and a sequence of
morphism $X_1[i_1]\to X_2[i_2]\to \cdots\to X_r[i_r]$ in $\bfD^b(\A)$
such that their composite is non-zero, we have $i_1\le\ldots\le i_r\le
i_1+1$.  Thus it remains to observe that infinite chains of
non-isomorphisms between indecomposable objects in $\A$ exist only in
one direction. More precisely, for each $n\ge 1$ let $\A_n$ denote the
full subcategory of representations with support contained in
$\{1,\ldots,n\}$.  Then any non-zero morphism $X\to Y$ between
indecomposable representations has the property that $X\in\A_n$
implies $Y\in\A_n$. This is an immediate consequence of the fact that
each indecomposable representation of $\Ga$ is, up to isomorphism, of
the following form:
\[0\to\cdots \to 0\to k\xto{1}\cdots\xto{1}k\to 0\to\cdots\]

On the other hand, there is an obvious chain of
proper epimorphisms $\cdots \to X_3\to X_2\to X_1$ in $\A$, where
$X_n$ denotes the unique indecomposable representation with support
$\{1,\ldots,n\}$.
\end{exm}

\subsection*{Locally finite triangulated categories}
An essentially small triangulated category $\T$ is said to be
\emph{locally finite} if $\T$ and $\T^\op$ are locally noetherian.
The first condition means that each representable functor
$\Hom_\T(-,X)$ is a noetherian $\T$-module. In particular, each
subobject belongs to $\bfA(\T)$.  Combining the second condition with
the equivalence $\bfA(\T)^\op\stackrel{\sim}\to\bfA(\T^\op)$, it
follows that $\Hom_\T(-,X)$ is an artinian $\T$-module. Thus locally
finite means that each representable functor $\Hom_\T(-,X)$ is of
finite length as a $\T$-module. In particular, $\T$ is locally finite
if and only if the category of $\T$-modules is locally finite in the
sense of \cite[II.4]{Ga1962}.

Suppose that $\T$ is locally finite and fix an object $Y$. Then there
are only finitely many isomorphism classes of indecomposable objects
$X$ satisfying $\Hom_\T(X,Y)\neq 0$, because $\Hom_\T(X,Y)\neq 0$
implies that $\Hom_\T(-,X)$ is the projective cover of a composition
factor of $\Hom_\T(-,Y)$. This observation gives rise to the following
characterisation which can be deduced from
\cite[Theorem~2.12]{Au1974}.

\begin{prop}[Auslander]\label{pr:fin}
\pushQED{\qed} An essentially small triangulated category $\T$ with
split idempotents is locally finite if and only if for each object $Y$
the following holds:
\begin{enumerate}
\item The object $Y$ decomposes into a finite direct sum of
  indecomposable objects.

\item There are only finitely many isomorphism classes of
  indecomposable objects $X$ satisfying $\Hom_\T(X,Y)\neq 0$.
\item For each indecomposable object $X$, the $\End_\T(X)$-module
  $\Hom_\T(X,Y)$ is of finite length.\qedhere
\end{enumerate}
\end{prop}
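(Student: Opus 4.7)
The plan is to prove the two implications separately, throughout relying on the preceding observation that $\T$ is locally finite if and only if every representable functor $\Hom_\T(-,Y)$ has finite length as a $\T$-module.

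For the forward direction, assume $\T$ is locally finite. Condition~(1) is immediate from Theorem~\ref{th:fin}(2). Now fix $Y$ and choose a composition series of $\Hom_\T(-,Y)$. The isomorphism classes of simple $\T$-modules are parametrised by indecomposable objects via $X\mapsto S_X:=\Hom_\T(-,X)/\Rad_\T(-,X)$, and $S_X$ occurs as a composition factor of $\Hom_\T(-,Y)$ precisely when $\Hom_\T(X,Y)\ne 0$: a non-zero morphism $X\to Y$ yields a morphism $\Hom_\T(-,X)\to\Hom_\T(-,Y)$ whose image has $S_X$ as a quotient, and conversely. Since the composition series is finite, only finitely many $X$ occur, giving (2). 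Evaluating the composition series at $X$ yields a finite filtration of the $\End_\T(X)$-module $\Hom_\T(X,Y)$ whose successive quotients are either zero or $\End_\T(X)/\rad\End_\T(X)$, a simple $\End_\T(X)$-module. This gives (3).

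For the converse, assume (1)--(3) hold for every $Y$, fix $Y$, and set $M = \Hom_\T(-,Y)$. By (2) there is a finite list $X_1,\dots,X_n$ of indecomposables exhausting those with $M(X_i)\ne 0$, and by (3) each $M(X_i)$ has finite $\End_\T(X_i)$-length $\ell_i$. For any $\T$-submodule $M'\subseteq M$ define
\[
\ell(M')=\sum_{i=1}^n\ell_{\End_\T(X_i)}\bigl(M'(X_i)\bigr)\le\sum_{i=1}^n\ell_i.
\]
I would then show that $M'\subsetneq M''$ forces $\ell(M')<\ell(M'')$: pick $Z$ with $M'(Z)\subsetneq M''(Z)$, decompose $Z$ into indecomposables via (1), and use the standard fact that additive subfunctors are compatible with finite biproduct decompositions (so $M'(Z)=\bigoplus M'(Z_k)$ under $M(Z)\cong\bigoplus M(Z_k)$) to produce an indecomposable summand $Z_k$ with $M'(Z_k)\subsetneq M''(Z_k)$; since $M(Z_k)\ne 0$, one has $Z_k\cong X_i$ for some $i$, and so the $i$-th summand of $\ell$ increases strictly. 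Hence every chain of submodules of $M$ is bounded in length by $\sum_i\ell_i$, so $M$ has finite length and $\T$ is locally finite.

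The main subtlety is the converse, specifically the verification that subfunctors of $M$ split cleanly along the finite biproduct decompositions supplied by~(1), so that the length function $\ell$ actually detects strict inclusions. With that compatibility in hand, the bound on chains of submodules is immediate, and the forward direction is largely a formal unpacking of finite length for the $\T$-module $\Hom_\T(-,Y)$.
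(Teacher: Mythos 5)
Your proof is correct, and it fills in details that the paper itself does not write out: the paper only states the key observation (that $\Hom_\T(X,Y)\neq 0$ exactly when $S_X=\Hom_\T(-,X)/\Rad_\T(-,X)$ is a composition factor of $\Hom_\T(-,Y)$) and then cites Auslander for the rest. Your argument runs along exactly those lines, and your length function $\ell(M')=\sum_i\ell_{\End_\T(X_i)}(M'(X_i))$ is precisely the counting device from Auslander's proof (quoted later in the paper as $\ell(X)=\sum_C\ell_{\End_\T(C)}\Hom_\T(C,X)$), so this is the intended approach, carried out completely.
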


\subsection*{Examples}
We list some examples of triangulated categories that are locally
finite.  Throughout we fix a field $k$.

(1) Let $\T$ be an essentially small $k$-linear triangulated
category. Suppose that idempotents split and that morphism spaces are
finite dimensional.  Then $\T$ is locally finite if and only if for
each object $Y$ there are only finitely many isomorphism classes of
indecomposable objects $X$ satisfying $\Hom_\T(X,Y)\neq 0$. This
follows from Proposition~\ref{pr:fin} and serves as a definition in
\cite{XZ}.

(2) Let $A$ be a finite dimensional $k$-algebra and suppose that $k$
is algebraically closed. Then the bounded derived category
$\bfD^b(\mod A)$ of the category of finite dimensional $A$-modules is
locally finite if and only if it is triangle equivalent to
$\bfD^b(\mod k\Ga)$ for some path algebra $k\Ga$ of a finite quiver
$\Ga$ such that  its underlying diagram is a disjoint union of
diagrams of Dynkin type; see \cite[\S5]{Ha1988} and \cite[Theorem~12.20]{Be2000b}.

(3) Let $\A$ be an essentially small hereditary abelian category. Then
the derived category $\bfD^b(\A)$ is locally finite if and only if
$\A$ satisfies the conditions in Proposition~\ref{pr:fin}.  This
follows from the fact that each indecomposable object is isomorphic to
a complex that is concentrated in a single degree.  If $\A$ is the
category of finitely generated modules over an artinian ring, then
this condition means that the ring is of finite representation type.

(4) Let $A$ be a noetherian ring and suppose that $A$ is
\emph{Gorenstein}, that is, $A$ has finite injective dimension as an
$A$-module. Denote by $\MCM(A)$ the category of finitely generated
$A$-modules $X$ that are \emph{maximal Cohen--Macaulay}, which means
that $\Ext_A^i(X,A)=0$ for all $i>0$. This is an exact Frobenius
category, and the stable category $\uMCM(A)$ modulo all morphisms that
factor through a projective object is a triangulated category
\cite{Bu}.

If $A$ is a finite dimensional and self-injective $k$-algebra, then
all $A$-modules are maximal Cohen--Macaulay, and $\uMCM(A)$ is locally
finite if and only if $A$ is of \emph{finite representation type},
that is, there are only finitely many isomorphism classes of
indecomposable $A$-modules. These algebras have been classified
\cite{Ri1983,Wa}.

If $A$ is a commutative complete local ring, then $A$ is by definition
of \emph{finite Cohen--Macaulay type} if there there are only finitely
many isomorphism classes of indecomposable maximal Cohen--Macaulay
modules over $A$.  In that case $\uMCM (A)$ is locally finite.  There
is a whole theory describing such rings and a parallel theory for
graded Gorenstein algebras; see \cite{Yo}.

(5) Let $\Ga$ be a quiver of Dynkin type. Then the orbit category
$\bfD^b(\mod k\Ga)/G$ of the derived category with respect to an
appropriate group $G$ of automorphisms is a locally finite
triangulated category \cite{Ke2005,Am2007}. Examples are the cluster
categories of finite type \cite{BMRRT}.

(6) The category of finitely generated projective modules over the
ring $\bbZ/4\bbZ$ carries a triangulated structure that admits no
model \cite{MSS}; it is a locally finite triangulated category.

\subsection*{Orthogonal subcategories}

Let $\T$ be a triangulated category and $\Sc$ a triangulated
subcategory. Then we define two full subcategories
\begin{align*}
\Sc^\perp&=\{Y\in\T\mid\Hom_\T(X,Y)=0\text{ for all }X\in\Sc\}\\
^\perp\Sc&=\{X\in\T\mid\Hom_\T(X,Y)=0\text{ for all }Y\in\Sc\}
\end{align*}
and call them \emph{orthogonal subcategories} with respect to $\Sc$.
Note that $\Sc^\perp$ and $^\perp\Sc$ are thick subcategories of $\T$.

The following lemma collects some basic facts about orthogonal
subcategories which are well-known. For a proof, see \cite[Proposition~4.9.1]{Kr2010}.

\begin{lem}\label{le:tria-loc}
\pushQED{\qed}
Let $\T$ be a triangulated category and $\Sc$ a thick subcategory. Then
the following are equivalent.
\begin{enumerate}
\item The inclusion functor $\Sc\to\T$ admits a right adjoint.
\item The composite $\Sc^\perp\xto{\inc}\T\xto{\can}\T/\Sc$ is an equivalence.
\item The inclusion functor $\Sc^\perp\to\T$ admits a left adjoint and
  $^\perp(\Sc^\perp)=\Sc$.\qedhere
\end{enumerate}
\end{lem}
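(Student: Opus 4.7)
The plan is to prove the three conditions equivalent by the cycle $(1)\Rightarrow(2)\Rightarrow(3)\Rightarrow(1)$, in each step constructing the needed functor or equivalence from a localisation triangle whose outer terms lie in $\Sc$ and $\Sc^\perp$. The underlying workhorse is the standard observation that if $X'\to X\to X''\to \Si X'$ is a triangle with $X'\in\Sc$ and $X''\in\Sc^\perp$, then the pair $(X',X'')$ is determined by $X$ up to unique isomorphism and depends functorially on $X$.

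For $(1)\Rightarrow(2)$, let $r\colon\T\to\Sc$ be right adjoint to the inclusion $i$. For each $X\in\T$ I would embed the counit $irX\to X$ into a triangle
\[ irX\lto X\lto Y_X\lto\Si\, irX \]
and verify $Y_X\in\Sc^\perp$ by applying $\Hom_\T(Z,-)$ for $Z\in\Sc$ and invoking the adjunction. This gives a functor $\T\to\Sc^\perp$, $X\mapsto Y_X$. The composite $\Sc^\perp\xto{\inc}\T\xto{\can}\T/\Sc$ is essentially surjective because the third term $\Si\, irX$ lies in $\Sc$, and fully faithful by the calculus of fractions: any roof $X'\to X$ with cone in $\Sc$ induces an isomorphism on $\Hom_\T(-,Y)$ whenever $Y\in\Sc^\perp$. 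For $(2)\Rightarrow(3)$, composing a quasi-inverse of the equivalence with the quotient functor yields a functor $\ell\colon\T\to\Sc^\perp$ together with a natural morphism $\eta_X\colon X\to j\ell X$ whose cone lies in $\Sc$ (since $\eta_X$ becomes invertible in $\T/\Sc$), and a short check shows $\eta$ is the unit of an adjunction making $\ell$ left adjoint to the inclusion $j$ of $\Sc^\perp$. The inclusion $\Sc\subseteq{}^\perp(\Sc^\perp)$ is immediate; for the converse, if $X\in{}^\perp(\Sc^\perp)$ then $\eta_X=0$, so the triangle built from $\eta_X$ splits and $X$ is a direct summand of an object of $\Sc$, hence lies in $\Sc$ because $\Sc$ is thick.

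For $(3)\Rightarrow(1)$, with $\ell$ the given left adjoint to $j\colon\Sc^\perp\to\T$, I would complete the unit $X\to j\ell X$ to a triangle
\[ W_X\lto X\lto j\ell X\lto\Si\, W_X. \]
The adjunction forces $\Hom_\T(W_X,Z)=0$ for every $Z\in\Sc^\perp$, so $W_X\in{}^\perp(\Sc^\perp)=\Sc$ by hypothesis. Functoriality of the triangle makes $X\mapsto W_X$ into a functor $\T\to\Sc$, and applying $\Hom_\T(S,-)$ to the triangle for $S\in\Sc$ identifies $\Hom_\T(S,W_X)\xto{\sim}\Hom_\T(S,X)$, exhibiting $W_X\to X$ as the counit of a right adjoint to the inclusion $\Sc\to\T$. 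The main obstacle in each step is bookkeeping for functoriality and the adjunction identities, and this is handled uniformly by the uniqueness of the triangle decomposition noted at the start; the only point where thickness of $\Sc$ is used in an essential way is the direct-summand argument needed for ${}^\perp(\Sc^\perp)=\Sc$ in $(2)\Rightarrow(3)$.
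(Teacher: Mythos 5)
Your argument is correct and is precisely the standard localisation argument: in each implication you produce the triangle $X'\to X\to X''\to\Si X'$ with $X'\in\Sc$ and $X''\in\Sc^\perp$ from the given adjoint or equivalence, and the only genuinely delicate points (that $\Hom_\T(-,Y)$ inverts morphisms with cone in $\Sc$ when $Y\in\Sc^\perp$, and the splitting argument using thickness for $^\perp(\Sc^\perp)=\Sc$) are handled correctly. The paper itself gives no proof but refers to \cite[Proposition~4.9.1]{Kr2010}, and your proof is essentially the one found there, so there is nothing to compare beyond noting the agreement.
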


There is an interesting consequence. If $\Sc$ is a thick subcategory of $\T$ such
that the inclusion admits a left and a right adjoint, then one has
equivalences
\[^\perp\Sc\stackrel{\sim}\lto\T/\Sc\stackrel{\sim}\longleftarrow\Sc^\perp.\]

\subsection*{Existence of adjoints}

Triangulated categories that are locally noetherian have the following
remarkable property.

\begin{thm}\label{th:adj}
Let $\T$ be an essentially small triangulated category and suppose
that $\T$ is locally noetherian. Then for each thick subcategory of
$\T$ the inclusion functor admits a right adjoint.
\end{thm}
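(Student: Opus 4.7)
The plan is to construct the right adjoint explicitly. For each $X\in\T$, a right adjoint must supply an object $R(X)\in\Sc$ with $\Hom_\Sc(S,R(X))\cong\Hom_\T(S,X)$ naturally in $S\in\Sc$, so the task reduces to showing that the restricted functor $F_X:=\Hom_\T(-,X)|_\Sc\colon\Sc^\op\to\Ab$ is representable in $\Sc$.

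As a preliminary, I would observe that $\Sc$ is itself locally noetherian. Using characterisation (2) of Theorem~\ref{th:fin}, the decomposition of objects into finite sums of indecomposables with local endomorphism rings descends from $\T$ to $\Sc$ since $\Sc$ is thick (hence closed under direct summands), and any chain of non-isomorphisms in $\Sc$ is a chain of non-isomorphisms in $\T$ by full faithfulness, so its composite eventually vanishes. Being a restriction of a representable functor along a triangulated inclusion, $F_X$ is cohomological on $\Sc$.

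The heart of the argument is to show that $F_X$ is noetherian as a $\Sc$-module. Given an ascending chain $G_1\subseteq G_2\subseteq\cdots$ of $\Sc$-submodules of $F_X$, I would lift each $G_n$ to a $\T$-submodule $\tilde G_n$ of $H(X)=\Hom_\T(-,X)$ by declaring $\tilde G_n(T)$ to be the subgroup of $\Hom_\T(T,X)$ generated by all composites $T\xto{\a}S\xto{\p}X$ with $S\in\Sc$, $\a\in\Hom_\T(T,S)$, and $\p\in G_n(S)$. A direct check shows that $\tilde G_n$ is stable under precomposition by arbitrary maps in $\T$, that $\tilde G_n|_\Sc=G_n$ (using that $G_n$ is already closed under precomposition by morphisms in $\Sc$), and that $\tilde G_n\subseteq\tilde G_{n+1}$. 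Since $H(X)\in\bfA(\T)$ is noetherian by Theorem~\ref{th:fin}(3), the lifted chain stabilises, and hence so does $(G_n)$. This lifting step is where I expect the main obstacle to lie: verifying that the generated subgroup is genuinely a subfunctor and that restriction recovers $G_n$ is a small but careful computation.

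To conclude, $F_X$ is cohomological and noetherian over the locally noetherian category $\Sc$, so Theorem~\ref{th:fin}(1) produces a decomposition $F_X\cong\bigoplus_{i\in I}\Hom_\Sc(-,X_i)$. Noetherianness forces $I$ to be finite, since otherwise the partial sums would form a strictly ascending chain of $\Sc$-submodules. Setting $R(X):=\bigoplus_{i\in I}X_i\in\Sc$, which exists as a finite coproduct in $\Sc$, gives a representing object, and letting $X$ vary assembles the desired right adjoint to the inclusion $\Sc\hookrightarrow\T$.
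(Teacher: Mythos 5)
Your argument is correct, but it takes a genuinely different route from the paper. The paper works inside $\bfA(\T)$ and the Verdier quotient: it forms the filtered colimit $\colim_{U\to X}\Hom_\T(-,U)$ over the comma category $\U/X$, places it in an exact sequence with $\Hom_\T(-,X)$ and $\Hom_{\T/\U}(-,X)$, decomposes both the colimit and the quotient functor into direct sums of representables using local noetherianness of $\T$, and extracts finiteness of the index sets from the exact sequence. You instead restrict $\Hom_\T(-,X)$ to $\Sc$, verify directly via condition (2) of Theorem~\ref{th:fin} that $\Sc$ is itself locally noetherian (which is the first half of Corollary~\ref{co:quot}, but your proof of it is independent of Theorem~\ref{th:adj}, so there is no circularity), apply the decomposition theorem inside $\Sc$, and obtain finiteness of the index set from your subfunctor-lifting argument; that lifting does work as you describe, since $\tilde G_n$ is visibly closed under precomposition, restricts back to $G_n$ because $G_n$ is an $\Sc$-submodule, and the ascending chain condition for \emph{arbitrary} subfunctors of $\Hom_\T(-,X)$ is available (the paper notes in the proof of Theorem~\ref{th:fin} that ACC on finitely presented subfunctors implies ACC on all subfunctors). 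Your version is more elementary and self-contained, avoiding the Verdier quotient entirely; the paper's version buys extra information along the way, namely that $\Hom_{\T/\U}(-,X)$ is representable by an object of $\U^\perp$, which feeds into Lemma~\ref{le:tria-loc} and the equivalence $\U^\perp\simeq\T/\U$ used later. One cosmetic remark: when citing noetherianness of $H(X)$ you should invoke ACC for arbitrary subfunctors rather than literally Theorem~\ref{th:fin}(3), since your $\tilde G_n$ need not be finitely presented; as noted, this stronger form is justified in the paper's proof of that theorem.
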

\begin{proof}
Fix a thick subcategory $\U$ and an object $X$ in $\T$. We need to
construct a morphism $U\to X$ with $U$ in $\U$ inducing a bijection
$\Hom_\T(U',U)\to\Hom_\T(U',X)$ for all $U'$ in $\U$. Take the comma
category $\U/X$ consisting of all morphisms $U\stackrel{\p}\to X$ with
$U$ in $\U$. A morphism from $U\stackrel{\p}\to X$ to
$U'\stackrel{\p'}\to X$ is a morphism $\mu\colon U\to U'$ such that
$\p'\mu=\p$.  This category is closely related to Verdier's
construction of the localisation functor $\T\to\T/\U$; see
\cite[II.2]{V}. The arguments given there show that $\U/X$ is
filtered. Therefore the functor
\[\colim_{U\to X}\Hom_\T(-,U)\] is cohomological.  
Moreover, one obtains an exact sequence of cohomological functors
\begin{multline*}
\cdots\lto \Hom_{\T/\U}(-,\Si^{-1}X)\lto \colim_{U\to
  X}\Hom_\T(-,U)\lto \\ \lto\Hom_\T(-,X)\lto \Hom_{\T/\U}(-,X)\lto
\cdots
\end{multline*}
since one has by definition
\[\Hom_{\T/\U}(-,X)=\colim_{X\to V}\Hom_\T(-,V)
\]
where $X\to V$ runs through all morphisms with cone in $\U$.  

Now we use that $\T$ is locally noetherian and write \[\colim_{U\to
  X}\Hom_\T(-,U)=\bigoplus_{i\in I}\Hom_\T(-,U_i)\] as a direct sum of
representable functors.  Similarly, we
get \[\Hom_{\T/\U}(-,X)=\bigoplus_{j\in J}\Hom_\T(-,V_j).\] We may
assume that $U_i$ and $V_j$ are non-zero for all $i,j$.  Observe that
$U_i\in \U$ and $V_j\in\U^\perp$ for all $i,j$.  The morphism
\[\Hom_\T(-,X)\lto\bigoplus_{j\in J}\Hom_\T(-,V_j)\] factors through a
finite sum $\bigoplus_{j\in J_0}\Hom_\T(-,V_j)$. In fact, the
exactness of the above sequence implies that $V_j$ belongs to $\U$ for
each $j\in J\smallsetminus J_0$. Thus $J=J_0$ and therefore
$\Hom_{\T/\U}(-,X)$ belongs to $\bfA(\T)$. It follows that $I$ is also
finite, since $ \colim_{U\to X}\Hom_\T(-,U)$ is an extension of two
objects in $\bfA(\T)$. This yields a morphism $U=\coprod_iU_i\to X$
inducing a bijection $\Hom_\T(U',U)\to\Hom_\T(U',X)$ for all $U'$ in
$\U$.
\end{proof}

\begin{cor}\label{co:perp}
Let $\T$ be an essentially small triangulated category and suppose
that $\T$ is locally noetherian.  If $\U$ is a thick subcategory of
$\T$, then $^\perp(\U^\perp)=\U$.
\end{cor}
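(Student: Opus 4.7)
The plan is to derive the corollary directly from the two results that immediately precede it in the paper. The inclusion $\U \subseteq {}^\perp(\U^\perp)$ is automatic from the definitions: any $X \in \U$ satisfies $\Hom_\T(X,Y) = 0$ for every $Y \in \U^\perp$, so $X$ lies in ${}^\perp(\U^\perp)$. Only the reverse inclusion requires argument.

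For the reverse inclusion, I would invoke Theorem \ref{th:adj} to conclude that the inclusion functor $\U \to \T$ admits a right adjoint, using that $\T$ is locally noetherian and $\U$ is thick. This is precisely condition (1) of Lemma \ref{le:tria-loc} applied with $\Sc = \U$. The equivalence of (1) and (3) in that lemma then tells us that condition (3) also holds, and one of the two assertions contained in (3) is exactly the identity $^\perp(\U^\perp) = \U$. This finishes the proof.

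The main obstacle, if there is one, has already been overcome in the proof of Theorem \ref{th:adj}: it is there that the locally noetherian hypothesis does genuine work, ensuring that the filtered colimit $\colim_{U \to X}\Hom_\T(-,U)$ decomposes as a direct sum of representable functors and then that only finitely many summands survive, so that a single morphism $U \to X$ representing the right adjoint can be extracted. Granted that theorem and Lemma \ref{le:tria-loc}, the corollary is a purely formal consequence; there is no further categorical or combinatorial difficulty to address.
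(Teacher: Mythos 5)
Your proposal is correct: Theorem~\ref{th:adj} gives the right adjoint for the inclusion $\U\to\T$, which is condition (1) of Lemma~\ref{le:tria-loc} with $\Sc=\U$, and condition (3) of that lemma contains the identity $^\perp(\U^\perp)=\U$ verbatim. In fact the paper explicitly acknowledges this route in its first sentence (``One could deduce this from Lemma~\ref{le:tria-loc}'') but then deliberately declines it in favour of a short self-contained argument: given $X\in{}^\perp(\U^\perp)$, take the universal (counit) morphism $U\to X$ with $U\in\U$, complete it to an exact triangle $U\to X\to V\to\Si U$, observe that $V\in\U^\perp$ because $\Hom_\T(U',\Si^nU)\to\Hom_\T(U',\Si^nX)$ is bijective for all $U'\in\U$ and all $n$, and conclude from $\Hom_\T(X,V)=0$ that the triangle splits and $X$ is a direct summand of $U$, hence lies in $\U$. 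The trade-off is the expected one: your argument is purely formal but leans on Lemma~\ref{le:tria-loc}, which the paper only cites (to \cite[Proposition~4.9.1]{Kr2010}) without proof, whereas the paper's argument uses only the existence of the adjoint from Theorem~\ref{th:adj} plus the thickness of $\U$, and is therefore self-contained modulo that theorem. Both are complete proofs; yours is the one the author chose not to write out.
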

\begin{proof}
  One could deduce this from Lemma~\ref{le:tria-loc}, but we give the
  complete argument because it is short and simple.  Clearly,
  ${^\perp(\U^\perp)}$ contains $\U$. Now pick an object $X$ in
  $^\perp(\U^\perp)$. Let $U\to X$ be the universal morphism from an
  object in $\U$ to $X$, and complete this to an exact triangle $U\to
  X\to V\to\Si U$. Then $V$ belongs to $\U^\perp$ and therefore
  $\Hom_\T(X,V)=0$. It follows that $X$ belongs to $\U$.
\end{proof}

The following example shows that the identity $^\perp(\U^\perp)=\U$
does not hold in general.

\begin{exm} Consider the bounded derived category $\bfD^b(\mod\bbZ)$ of
finitely generated modules over the ring $\bbZ$ of integers.  The
complexes with torsion cohomology form a thick subcategory
$\U=\bfD^b_{\tor}(\mod\bbZ)$ such that $^\perp\U=0$.
\end{exm}

\begin{cor}\label{co:quot}
Let $\T$ be an essentially small triangulated category and suppose
that $\T$ is locally noetherian. If $\U$ a thick subcategory of $\T$,
then the categories $\U$ and $\T/\U$ are locally noetherian.
\end{cor}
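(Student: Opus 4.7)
The plan is to verify condition (2) of Theorem~\ref{th:fin} for both $\U$ and $\T/\U$, using the key observation that any thick subcategory of a locally noetherian triangulated category inherits the property in a completely elementary way.

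First I would dispose of $\U$. Since $\U$ is thick, it is closed under direct summands in $\T$. Given $X\in\U$, the decomposition $X=X_1\amalg\cdots\amalg X_n$ into indecomposables with local endomorphism rings provided by $\T$ stays inside $\U$, and this forces idempotent completeness as well. Likewise, any infinite sequence of non-isomorphisms between indecomposables in $\U$ is a fortiori such a sequence in $\T$, so some finite composition vanishes. Thus $\U$ satisfies condition (2) of Theorem~\ref{th:fin}, and is locally noetherian.

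For $\T/\U$ I would reduce to the previous case via an orthogonal subcategory. By Theorem~\ref{th:adj} the inclusion $\U\to\T$ admits a right adjoint, so Lemma~\ref{le:tria-loc} yields an equivalence
\[\U^\perp\xto{\sim}\T/\U.\]
Since $\U^\perp$ is a thick subcategory of $\T$ (as noted in the discussion preceding Lemma~\ref{le:tria-loc}), the argument of the previous paragraph applied to $\U^\perp$ shows that $\U^\perp$ is locally noetherian. Transporting this along the displayed equivalence gives the result for $\T/\U$.

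There is essentially no hard step here: the substance of the corollary is concentrated in Theorem~\ref{th:adj}, whose invocation is the only non-elementary input. The mild subtlety to keep in mind is that one should verify condition (2) rather than condition (1) of Theorem~\ref{th:fin}, because (1) involves functors on the whole category and does not restrict to subcategories in an obvious way, whereas (2) is a purely local condition on objects and morphisms that passes trivially to thick subcategories.
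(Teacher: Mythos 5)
Your proof is correct, and for the quotient $\T/\U$ it is exactly the paper's argument: invoke Theorem~\ref{th:adj} to get the right adjoint, hence the equivalence $\U^\perp\xto{\sim}\T/\U$ from Lemma~\ref{le:tria-loc}, and then transport the statement for the thick subcategory $\U^\perp$ across it. The only divergence is in how the claim for a thick subcategory $\U$ itself is established. The paper checks condition (3) of Theorem~\ref{th:fin}: the inclusion $\U\to\T$ induces a fully faithful exact functor $\bfA(\U)\to\bfA(\T)$ on abelianisations, so noetherianness of objects is inherited. You instead check condition (2), observing that indecomposables of $\U$ are indecomposables of $\T$ (fullness plus closure under direct summands) and that chains of non-isomorphisms in $\U$ are chains in $\T$. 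Both routes are short and valid; yours is marginally more elementary since it avoids knowing that $\bfA(\U)\to\bfA(\T)$ is fully faithful and exact, while the paper's has the advantage of working uniformly at the level of the module-theoretic characterisation that is used throughout the rest of the section. Your closing remark that condition (1) does not obviously restrict to subcategories is a fair justification for the choice, though note the paper shows that condition (3) restricts just as painlessly as (2).
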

\begin{proof}
The inclusion $\U\to\T$ induces a fully faithful and exact functor
$\bfA(\U)\to\bfA(\T)$. Thus every object of $\bfA(\U)$ is
noetherian. On the other hand, there is an equivalence
$\U^\perp\xto{\sim}\T/\U$ by Lemma~\ref{le:tria-loc}. Thus $\T/\U$ is
locally noetherian, since $\U^\perp$ is locally noetherian.
\end{proof}

\section{Auslander--Reiten theory for triangulated categories}

We describe briefly the Auslander--Reiten theory for an essentially
small triangulated category $\T$ that is locally noetherian. For
general concepts from Auslander--Reiten theory we refer to
Appendix~\ref{se:appendix}.

\subsection*{Auslander--Reiten triangles}
Recall from \cite{Ha1987} that an exact triangle
\[X\stackrel{\a}\lto Y\stackrel{\b}\lto Z\stackrel{\g}\lto\Si X\] is
an \emph{Auslander--Reiten triangle} starting at $X$ and ending at $Z$
if $\a$ is a left almost split morphism and $\b$ is a right almost
split morphism.  Observe that these properties hold if and only if the
morphism $\b$ is minimal right almost split, by
\cite[Lemma~2.6]{Kr2000b}.

\begin{prop}\label{pr:ARtriangle}
Given an essentially small triangulated category that is locally
noetherian, there exists for each indecomposable object an
Auslander--Reiten triangle ending at it.
\end{prop}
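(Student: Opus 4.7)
The plan is to reduce the proposition to constructing a \emph{minimal} right almost split morphism $\beta\colon Y\to Z$: by the criterion noted just after the definition of Auslander--Reiten triangles, which invokes \cite[Lemma~2.6]{Kr2000b}, completing such a $\beta$ to an exact triangle $X\xto{\alpha}Y\xto{\beta}Z\to\Si X$ automatically yields an Auslander--Reiten triangle ending at $Z$.

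First I would construct a (possibly non-minimal) right almost split morphism. Since $Z$ is indecomposable, Theorem~\ref{th:fin}(2) guarantees that $\End_\T(Z)$ is local, so the representable functor $P_Z=\Hom_\T(-,Z)$ has a unique maximal subfunctor, namely the radical $\Rad_\T(-,Z)$; for any $W\in\T$, a morphism $W\to Z$ lies in $\Rad_\T(W,Z)$ precisely when it is not a split epimorphism. Because $\T$ is locally noetherian, $P_Z$ is a noetherian object of $\bfA(\T)$ by Theorem~\ref{th:fin}(3), so its subfunctor $\Rad_\T(-,Z)$ is finitely generated. Hence there exist $Y\in\T$ and an epimorphism $\Hom_\T(-,Y)\twoheadrightarrow\Rad_\T(-,Z)$ in $\bfA(\T)$, which by the Yoneda lemma is of the form $\beta_*$ for some $\beta\colon Y\to Z$. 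Since the image of $\beta_*$ equals $\Rad_\T(-,Z)$, every non-split epimorphism into $Z$ factors through $\beta$, so $\beta$ is right almost split.

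To upgrade $\beta$ to a \emph{minimal} right almost split morphism I invoke the Krull--Schmidt property from Theorem~\ref{th:fin}(2): decompose $Y=\bigoplus_{i=1}^n Y_i$ into indecomposables with local endomorphism rings, and discard any summand whose deletion leaves the induced map to $\Rad_\T(-,Z)$ still surjective. A standard Fitting-type argument in Krull--Schmidt additive categories then shows that the resulting morphism is right minimal. Completing it to an exact triangle produces the desired Auslander--Reiten triangle.

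The main technical point is the right minimality step; once this is in place, all of the triangulated input is packaged into the cited result \cite[Lemma~2.6]{Kr2000b}, which supplies the left almost split half of the Auslander--Reiten condition (and indecomposability of $X$) automatically.
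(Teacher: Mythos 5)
Your proposal is correct and follows essentially the same route as the paper: the paper takes a minimal projective presentation of the simple functor $S_Z=\Hom_\T(-,Z)/\Rad_\T(-,Z)$ (which exists by local noetherianity plus Proposition~\ref{pr:KRS}), reads off a minimal right almost split morphism via Lemma~\ref{le:cover}, and completes it to a triangle — which is exactly your construction of a projective cover of $\Rad_\T(-,Z)$, just with the minimality step packaged into the cited Krull--Remak--Schmidt/projective-cover machinery rather than done by hand with the discard-and-Fitting argument.
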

\begin{proof}
Fix an indecomposable object $Z$ in $\T$.  Then the simple $\T$-module
$S_Z=\Hom_\T(-,Z)/\Rad_\T(-,Z)$ is finitely presented since $\T$ is
locally noetherian. Thus we can choose in $\bfA(\T)$ a minimal
projective presentation
\[\Hom_\T(-,Y)\lto\Hom_\T(-,Z)\lto S_Z\lto 0;\]
see Proposition~\ref{pr:KRS}.  It follows from Lemma~\ref{le:cover} that the
induced morphism $Y\to Z$ is minimal right almost split.  Completing
this morphism to an exact triangle yields an Auslander--Reiten
triangle ending at $Z$.
\end{proof}

The definition of an Auslander--Reiten triangle is symmetric. Thus
there are Auslander--Reiten triangles in $\T$ starting at each
indecomposable object if $\T^\op$ is locally noetherian. This gives
the existence of Auslander--Reiten triangles for locally finite
triangulated categories.  For compactly generated triangulated
categories, this result is due to Beligiannis
\cite[Theorem~10.2]{Be2004}.

\begin{cor}\label{co:AR}
\pushQED{\qed} Given an essentially small triangulated category that
is locally finite, there exist Auslander--Reiten triangles starting
and ending at each indecomposable object.\qedhere
\end{cor}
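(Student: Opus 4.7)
The plan is to reduce the claim to Proposition~\ref{pr:ARtriangle} by invoking the self-duality of the notion of an Auslander--Reiten triangle. Since $\T$ is locally finite, both $\T$ and $\T^\op$ are locally noetherian by definition, so Proposition~\ref{pr:ARtriangle} is available for each.

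First I would apply Proposition~\ref{pr:ARtriangle} directly to $\T$: for every indecomposable object $Z$ it furnishes an Auslander--Reiten triangle $X\xto{\a}Y\xto{\b}Z\xto{\g}\Si X$ ending at $Z$. This gives half of the statement.

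For the other half, I would pass to the opposite category. The opposite $\T^\op$ carries a natural triangulated structure with suspension $\Si^{-1}$, in which exact triangles are precisely the diagrams obtained from exact triangles in $\T$ by reversing arrows. Under this dictionary, a morphism in $\T$ is left almost split if and only if the corresponding morphism in $\T^\op$ is right almost split, and an Auslander--Reiten triangle starting at $X\in\T$ corresponds to an Auslander--Reiten triangle ending at $X\in\T^\op$. Since $\T^\op$ is locally noetherian, applying Proposition~\ref{pr:ARtriangle} inside $\T^\op$ to a given indecomposable $X$ produces an Auslander--Reiten triangle in $\T^\op$ ending at $X$, which, translated back to $\T$, is an Auslander--Reiten triangle starting at $X$.

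The only real point to verify is that the notion of Auslander--Reiten triangle is indeed preserved under the passage $\T\leftrightarrow\T^\op$, i.e.\ that minimal right almost split morphisms in $\T^\op$ coincide, via the equivalence $\bfA(\T)^\op\stackrel{\sim}\to\bfA(\T^\op)$ already recorded in \eqref{eq:dual}, with minimal left almost split morphisms in $\T$. This is a matter of unpacking definitions rather than a genuine obstacle, and will be the main (and essentially only) thing to check carefully.
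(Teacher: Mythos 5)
Your argument is exactly the paper's: apply Proposition~\ref{pr:ARtriangle} to $\T$ for triangles ending at each indecomposable, and use the symmetry of the definition of an Auslander--Reiten triangle to apply it to $\T^\op$ (locally noetherian since $\T$ is locally finite) for triangles starting at each indecomposable. The duality check you flag at the end is the routine point the paper summarises by saying the definition is symmetric, so your proposal is correct and matches the intended proof.
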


\begin{rem}\label{re:tau}
Let $X\to Y\to Z\to \Si X$ be an Auslander--Reiten triangle in an
essentially small triangulated category that is locally
noetherian. The relation between the end terms can be explained as
follows. Let $A=\End_\T(Z)$ and denote by $E=E(A/\rad A)$ an injective
envelope. Then
\begin{equation}\label{eq:tau}
  \Hom_A(\Hom_\T(Z,-),E)\cong\Hom_\T(-,\Si
  X);
\end{equation}
see \cite[Theorem~2.2]{Kr2000b}. In particular,
\begin{align*}
  \End_\T(X)&\cong \End_\T(\Si X)\\
  &\cong\Hom_A(\Hom_\T(Z,\Si X),E)\\
  &\cong\Hom_A(\Hom_A(\Hom_\T(Z,Z),E),E)\\
&\cong\End_A(E).
\end{align*}
\end{rem}

\subsection*{The Auslander--Reiten quiver}
For a locally finite triangulated category the structure of the
Auslander--Reiten quiver has been determined in work of Xiao and Zhu
\cite{XZ} and Amiot \cite{Am2007}. In fact, these authors consider
triangulated categories that are linear over a field with finite
dimensional morphism spaces. Then they apply the structural results on
valued translation quivers due to Riedtmann \cite{Ri1980} and Happel,
Preiser, and Ringel \cite{HPRa}.  The same arguments work in a
slightly more general setting, thanks to the following result; see also
\cite[Proposition~2.1]{Li}.

For the definition of a \emph{valued translation quiver}, see
\cite[\S2]{Li}. The original definition \cite[\S2]{HPRb} excludes
loops, but they are possible in our setting.

\begin{prop}\label{pr:AR}
Let $k$ be a commutative ring and $\T$ an essentially small $k$-linear
triangulated category such that all morphism spaces are of finite
length over $k$. Suppose that $\T$ is locally finite.  Then the
Auslander--Reiten quiver of $\T$ is a valued translation
quiver. Assigning to a vertex $X$ the length $\ell(X)$ of
$\Hom_\T(-,X)$ in the abelianisation $\bfA(\T)$ yields a subadditive
function on the set of vertices such that for each vertex $Z$
\[2\ell(Z)=\ell(Z)+\ell(\t Z)=2+\sum_{Y\to Z}\d_{Y,Z}\ell(Y).\]
\end{prop}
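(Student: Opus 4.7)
The plan is to read off both the translation quiver structure and the length formulae directly from the Auslander--Reiten triangles, using the abelianisation $\bfA(\T)$ as the bookkeeping device. Existence of AR triangles starting and ending at each indecomposable is guaranteed by Corollary~\ref{co:AR}, giving a well-defined translation $\t$ on vertices. For arrows and valuations one considers for each ordered pair $(Y,Z)$ of indecomposables the bimodule $\Irr(Y,Z)=\Rad_\T(Y,Z)/\Rad_\T^{2}(Y,Z)$ over the endomorphism division rings of $Y$ and $Z$; the standard correspondence between summands of the middle term of an AR triangle and irreducible morphisms, together with the compatibility between the ends of the triangle encoded in Remark~\ref{re:tau}, then produces the valued translation quiver axioms.

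The heart of the proof is a length calculation using a single AR triangle. Fix an indecomposable $Z$, take its AR triangle $\t Z\xto{\a}Y\xto{\b}Z\to\Si\t Z$, and apply $\Hom_\T(-,?)$ to obtain the long exact sequence in $\bfA(\T)$
\[\Hom(-,\Si^{-1}Z)\xto{\partial}\Hom(-,\t Z)\xto{\a_*}\Hom(-,Y)\xto{\b_*}\Hom(-,Z)\to\Hom(-,\Si\t Z).\]
Because $\b$ is minimal right almost split, $\Im\b_*=\Rad_\T(-,Z)$, so $\Coker\b_*$ is the simple top $S_Z=\Hom_\T(-,Z)/\Rad_\T(-,Z)$, of length one. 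Dually, $\a$ is minimal left almost split; applying the equivalence $\bfA(\T)^\op\simeq\bfA(\T^\op)$ from \eqref{eq:dual} to the analogous statement in $\bfA(\T^\op)$---the cokernel of $\Hom_\T(Y,-)\to\Hom_\T(\t Z,-)$ is the simple top of $\Hom_\T(\t Z,-)$---shows that $\Ker\a_*=\Im\partial$ is simple of length one in $\bfA(\T)$.

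Length additivity on the five-term exact sequence $0\to\Im\partial\to\Hom(-,\t Z)\to\Hom(-,Y)\to\Hom(-,Z)\to S_Z\to 0$ now yields $\ell(Y)=\ell(\t Z)+\ell(Z)-2$. Decomposing $Y\cong\coprod_i Y_i^{\d_{Y_i,Z}}$ into indecomposable summands, with $\d_{Y_i,Z}$ the valuation attached to the arrow $Y_i\to Z$ above, gives $\ell(Y)=\sum_{Y\to Z}\d_{Y,Z}\ell(Y)$; this produces the second equality of the proposition, and subadditivity $\ell(\t Z)+\ell(Z)\ge\sum_{Y\to Z}\d_{Y,Z}\ell(Y)$ is immediate.

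For the remaining identity $\ell(Z)=\ell(\t Z)$, I use that $\Si$ is an autoequivalence of $\T$---hence induces an autoequivalence of $\bfA(\T)$ and preserves length---so it suffices to show $\ell(\Si\t Z)=\ell(Z)$. Here formula \eqref{eq:tau} identifies $\Hom_\T(-,\Si\t Z)$ with the Matlis dual $D_A\Hom_\T(Z,-)$ over $A=\End_\T(Z)$; combined with \eqref{eq:dual} and the length-preserving nature of Matlis duality on finite-length $A$-modules, this gives the desired equality. The main obstacle I expect is exactly this last step: one must check that pointwise Matlis duality transports composition factors of $\Hom_\T(Z,-)\in\bfA(\T^\op)$ bijectively to composition factors of $\Hom_\T(-,\Si\t Z)\in\bfA(\T)$, so that the two abelianisation-lengths really coincide.
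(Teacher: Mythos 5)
Your argument reproduces the paper's proof essentially step for step: the translation comes from Corollary~\ref{co:AR}, the valuation identities are the content of Lemmas~\ref{le:val} and \ref{le:val'}, and the length formula is obtained from the five-term exact sequence $0\to S_{\Si^{-1}Z}\to\Hom_\T(-,\t Z)\to\Hom_\T(-,\bar Y)\to\Hom_\T(-,Z)\to S_Z\to 0$ together with the decomposition of $\bar Y$ from Lemma~\ref{le:AR}. The one step you flag as an obstacle --- that the duality \eqref{eq:tau} really preserves length in the abelianisation --- is precisely where the paper inserts Auslander's alternative description $\ell(X)=\sum_C\ell_{\End_\T(C)}\Hom_\T(C,X)$ from \cite[Proposition~2.11]{Au1974} together with the equivalence \eqref{eq:dual}; since $\Hom_A(-,E)$ with $A=\End_\T(Z)$ preserves lengths termwise, this closes exactly the gap you anticipate.
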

\begin{proof}
The existence of Auslander--Reiten triangles has already been
established, and this gives the translation $\t$. The identities for
the valuation are precisely the statements of Lemmas~\ref{le:val} and
\ref{le:val'}. For the second part one uses the fact that each
Auslander--Reiten triangle $\t Z\to \bar Y\to Z\to\Si (\t Z)$ induces
an exact sequence
\[0\to S_{\Si^{-1} Z}\to\Hom_\T(-,\t Z)\to\Hom_\T(-,\bar
Y)\to\Hom_\T(-,Z)\to S_Z\to 0\] in $\bfA(\T)$ by
Lemma~\ref{le:cover}. Then one applies Lemma~\ref{le:AR} which gives a
decomposition \[\bar Y=\coprod_{Y\to Z}Y^{\d_{Y,Z}}\] where $Y\to Z$
runs through all arrows ending at $Z$. It remains to observe that
$\ell(Z)=\ell(\t Z)$. This follows from the isomorphism \eqref{eq:tau}
and the alternative description of $\ell$ via
\[\ell(X)=\sum_C\ell_{\End_\T(C)}\Hom_\T(C,X),\] where $C$ runs through
a representative set of indecomposable objects; see
\cite[Proposition~2.11]{Au1974}.  Note that $\Hom_\T(-,X)$ and
$\Hom_\T(X,-)$ have the same length, thanks to the duality
\eqref{eq:dual}.
\end{proof}

\begin{thm}[Xiao--Zhu]\label{th:AR}
Let $k$ be a commutative ring and $\T$ an essentially small $k$-linear
triangulated category such that all morphism spaces are of finite
length over $k$.  Suppose that $\T$ is locally finite. Then each
connected component of the Auslander--Reiten quiver of $\T$ is of the
form $\bbZ\De/G$ for some tree $\De$ of Dynkin type and some group $G$
of automorphisms of $\bbZ\De$.
\end{thm}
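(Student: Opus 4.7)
The plan is to assemble three ingredients. First, because $\T$ is locally finite, Corollary~\ref{co:AR} ensures that Auslander--Reiten triangles exist both starting and ending at every indecomposable object; consequently the translation $\t$ is defined on all vertices, so each connected component $C$ of the Auslander--Reiten quiver is a \emph{stable} valued translation quiver (Proposition~\ref{pr:AR}).

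Second, I would invoke Riedtmann's structure theorem \cite{Ri1980}: any connected stable valued translation quiver is of the form $\bbZ\De/G$ for some directed tree $\De$ and some admissible group $G$ of automorphisms of $\bbZ\De$. The construction is the standard one---build the universal cover $\bbZ\De$ from a section in $C$ and realise $C$ as the quotient by the deck transformations. Since this argument is purely combinatorial, it applies to our valued---and possibly loop-containing---setting with no change.

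It remains to show that $\De$ is of Dynkin type, and here I would use the subadditive function $\ell$ from Proposition~\ref{pr:AR}. Since $\ell(\t Z) = \ell(Z)$, the identity recorded there rewrites as
\[
\ell(Z) + \ell(\t Z) \;=\; 2 + \sum_{Y \to Z} \d_{Y,Z}\, \ell(Y),
\]
so $\ell$ is positive integer-valued and \emph{strictly} subadditive, with defect exactly $2$ at every vertex. By the Happel--Preiser--Ringel theorem \cite{HPRa}, the existence of a strictly subadditive positive integer-valued function on a connected valued translation quiver forces its tree class $\De$ to be of Dynkin type; the additive case is excluded because the defect never vanishes.

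The main obstacle I anticipate is not in any of these three ingredients themselves but in checking that they still apply in the present generality: the original formulations of Riedtmann and of Happel--Preiser--Ringel assume a base field and finite-dimensional morphism spaces, whereas here $k$ is only a commutative ring and the valuations $\d_{Y,Z}$ arise from lengths over the (possibly non-commutative) local endomorphism rings $\End_\T(X)$. Once Proposition~\ref{pr:AR} has produced both the valued translation quiver structure and the integer-valued subadditive function, however, Riedtmann's construction and the HPR analysis are purely combinatorial and transfer without modification.
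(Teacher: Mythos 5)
Your outline is the same route the paper takes: the proof of Theorem~\ref{th:AR} simply says to adapt the arguments of Xiao--Zhu and Amiot using Proposition~\ref{pr:AR}, and those arguments are exactly your three ingredients (existence of $\t$ from Corollary~\ref{co:AR}, Riedtmann's Struktursatz, and the Happel--Preiser--Ringel analysis of the subadditive function $\ell$). Two steps in your sketch are, however, stated too strongly and need repair.

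First, loops. The definition of valued translation quiver used in this paper (following Liu) explicitly permits loops, whereas Riedtmann's Struktursatz and the HPR theorem are formulated for quivers without loops, and $\bbZ\De$ for a tree $\De$ never contains a loop. So your assertion that Riedtmann's construction applies ``with no change'' to the possibly loop-containing component is not correct as stated; one must first observe (as in Lemma~\ref{le:tree}, quoting Amiot's Proposition~4.1.1) that an irreducible morphism $X\to X$ forces $X=\t X$, remove these loops, and only then apply the Struktursatz to the resulting stable translation quiver. Second, the HPR theorem does not say that a subadditive, non-additive, positive integer-valued function forces the tree class to be of (finite) Dynkin type: it narrows the possibilities to the Dynkin diagrams \emph{or} $A_\infty$ (and ruling out additivity only eliminates the Euclidean case, as you say). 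The infinite tree class must be excluded separately, either by Amiot's finiteness lemma for the tree (cited in Lemma~\ref{le:tree}) or directly from Proposition~\ref{pr:fin}(2), since a component of tree class $A_\infty$ would contain infinitely many indecomposables admitting non-zero maps to a fixed object. With these two points supplied, your argument is exactly the intended one.
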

\begin{proof}
Adapt the proofs of \cite[Theorem~2.3.4]{XZ} or
\cite[Theorem~4.0.4]{Am2007}, using Proposition~\ref{pr:AR}.
\end{proof}

\begin{exm}
Let $k$ be a field and $\Ga$ be a quiver of Dynkin type. Then the
Auslander--Reiten quiver of $\bfD^b(\mod k\Ga)$ is of the form
$\bbZ\Ga$; see \cite[\S4]{Ha1987}.
\end{exm}

A \emph{decomposition} $\C=\C_1\times\C_2$ of an additive category
$\C$ is a pair of full additive subcategories $\C_1$ and $\C_2$ such
that each object in $\C$ is a direct sum of two objects from $\C_1$
and $\C_2$, and $\Hom_\C(X_1,X_2)=0=\Hom_\C(X_2,X_1)$ for all
$X_1\in\C_1$ and $X_2\in\C_2$.  An additive category $\C$ is
\emph{connected} if any decomposition $\C=\C_1\times\C_2$ implies
$\C=\C_1$ or $\C=\C_2$.

\begin{prop}\label{pr:connect}
Let $\T$ be an essentially small triangulated category that is locally
finite. Then any non-zero morphism $X\to Y$ between two indecomposable
objects in $\T$ gives a path $X\to \cdots\to Y$ in the
Auslander--Reiten quiver of $\T$. In particular, the category $\T$ is
connected if and only if its Auslander--Reiten quiver is connected.
\end{prop}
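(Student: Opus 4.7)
The plan is to induct on the depth of $f$ in the radical filtration, exploiting for each $Y$ the Auslander--Reiten triangle ending at $Y$ guaranteed by Corollary~\ref{co:AR}. Since $\T$ is locally finite, every representable functor $\Hom_\T(-,Y)$ has finite length in $\bfA(\T)$, so the descending chain $\Rad^n_\T(-,Y)$ vanishes for $n$ large. Hence for any non-zero $f\colon X\to Y$ between indecomposables there is a unique integer $\nu(f)$ with $f\in\Rad^{\nu(f)}_\T(X,Y)\setminus\Rad^{\nu(f)+1}_\T(X,Y)$, and I would induct on $\nu(f)$.

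For the base case $\nu(f)=0$, the morphism $f$ avoids the radical and is therefore an isomorphism, since both $X$ and $Y$ are indecomposable with local endomorphism rings; thus $X\cong Y$ and the trivial length-zero path at this vertex suffices. For the inductive step $\nu(f)\ge 1$, I would fix the Auslander--Reiten triangle $\t Y\to\bar Y\to Y\to\Si\t Y$ with minimal right almost split middle morphism $\phi\colon\bar Y\to Y$. The non-isomorphism $f$ then factors as $f=\phi\comp g$ for some $g\colon X\to\bar Y$. Decomposing $\bar Y=\coprod_i Y_i$ into indecomposable summands gives components $\phi_i\colon Y_i\to Y$ of $\phi$ and $g_i\colon X\to Y_i$ of $g$, and each $\phi_i$ lies in $\Rad_\T(Y_i,Y)$ since $\phi$ is not a split epimorphism.

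The central step, which I expect to be the main obstacle, is to show that some $g_i$ satisfies $g_i\notin\Rad^{\nu(f)}_\T(X,Y_i)$. Indeed, were every $g_i$ in $\Rad^{\nu(f)}_\T(X,Y_i)$, then the ideal property $\Rad_\T\cdot\Rad^n_\T\subseteq\Rad^{n+1}_\T$ would force each $\phi_i\comp g_i$, and hence $f=\sum_i\phi_i\comp g_i$, into $\Rad^{\nu(f)+1}_\T(X,Y)$, contradicting the choice of $\nu(f)$. Any such $g_i$ is automatically non-zero, satisfies $\nu(g_i)<\nu(f)$, and comes with an arrow $Y_i\to Y$ in the Auslander--Reiten quiver. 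Applying the inductive hypothesis to $g_i$ yields a path $X\to\cdots\to Y_i$, which I would extend by the arrow $Y_i\to Y$ to conclude.

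For the final assertion, the first part implies that any two indecomposables linked by a non-zero morphism (in either direction) lie in the same connected component of the Auslander--Reiten quiver. Hence the connected components partition the indecomposables into classes admitting no non-zero morphisms across classes, giving an additive product decomposition of $\T$ indexed by these classes. Consequently $\T$ is connected if and only if there is precisely one such class, which is equivalent to the Auslander--Reiten quiver being connected.
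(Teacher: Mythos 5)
Your proof is correct and follows essentially the same route as the paper: factor a non-invertible $f$ through the minimal right almost split morphism $\bar Y\to Y$ supplied by Proposition~\ref{pr:ARtriangle}, extract an arrow $Y_i\to Y$ via Lemma~\ref{le:AR}, and walk backwards. Your induction on the radical depth $\nu(f)$ is just a precise formulation of the paper's terser "the process terminates since $\T$ is locally finite", and the observation that some component $g_i$ must escape $\Rad^{\nu(f)}_\T(X,Y_i)$ is exactly the right way to guarantee termination.
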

\begin{proof}
We prove the first statement; the second statement is then an
immediate consequence. Let $\p\colon X\to Y$ be a non-zero
morphism. If $\p$ is invertible, then the path between $X$ and $Y$ has
length zero. Otherwise, $\p$ factors through the right almost split
morphism ending at $Y_0=Y$, which exists by
Proposition~\ref{pr:ARtriangle}. It follows from Lemma~\ref{le:AR}
that there is an arrow $Y_1\to Y_0$ and a non-zero morphism $X\to Y_0$
that factors through an irreducible morphism $Y_1\to Y_0$. We continue
with the corresponding morphism $X\to Y_1$, and the process terminates
since $\T$ is locally finite.
\end{proof}

It is interesting to note that the Auslander--Reiten quiver of $\T$
can be identified with the Ext-quiver \cite[7.1]{Ga1973} of the
abelian length category $\bfA(\T)$, using the bijection from
Lemma~\ref{le:simples} between the indecomposable objects of $\T$ and
the simple objects of $\bfA(\T)$. For the bijection between arrows one
uses Lemma~\ref{le:irr}.

\subsection*{The Nakayama functor}
Let $k$ be a field and $\T$ an essentially small $k$-linear
triangulated category with finite dimensional morphism spaces. Suppose
that $\T$ is locally finite.  Then there is for each object $X$ in
$\T$ an object $NX$ representing the $k$-dual of $\Hom_\T(X,-)$, that
is,
\[\Hom_k(\Hom_\T(X,-),k)\cong\Hom_\T(-,NX).\]
More precisely, we have an isomorphism
\[\Hom_k(\Hom_\T(X,-),k)\cong\bigoplus_{i\in I}\Hom_\T(-,Y_i)\]
for some collection of indecomposable objects $Y_i$ since $\T$ is
locally noetherian. It follows from Proposition~\ref{pr:fin} that $I$
is finite. Thus $NX=\coprod_iY_i$ is a representing object.  This
gives a functor $N\colon\T\to\T$ which is known as \emph{Nakayama
  functor} in representation theory \cite{Ga1980}, or as \emph{Serre
  functor} in algebraic geometry \cite{BK}.  It is easily checked that
this functor is an equivalence; a quasi-inverse is given by the
Nakayama functor for $\T^\op$ which sends an object $X$ to the object
representing $\Hom_k(\Hom_\T(-,X),k)$. The exactness then follows from
\cite[Proposition~3.3]{BK} or \cite[Theorem~A.4.4]{vdB}.

Note that for each indecomposable object $Z$ in $\T$, one obtains an
Auslander--Reiten triangle $X\to Y\to Z\to\Si X$ by first choosing a
non-zero $k$-linear map $\End_\T(Z)\to k$ annihilating the unique
maximal ideal of $\End_\T(Z)$ and then completing the corresponding
morphism $Z\to NZ$ to an exact triangle $\Si^{-1}(NZ)\to Y\to Z\to
NZ$; see \cite{Kr2000b} for details.

\section{The lattice of thick subcategories}

Let $\T$ be an essentially small triangulated category. We denote by $\mathbf T
(\T)$ the set of all thick subcategories of $\T$. This set is
partially ordered by inclusion. Observe that for any collection of
thick subcategories $\U_i$ its intersection $\bigcap_i\U_i$ is
thick. Thus $\bfT(\T)$ is a \emph{lattice}, that is, we can form for
each pair of thick subcategories $\U,\V$ its infimum
$\U\wedge\V=\U\cap\V$ and its supremum $\U\vee\V=\bigcap_{\Sc}\Sc$,
where $\Sc$ runs through all thick subcategories containing $\U$ and
$\V$.  In fact, the lattice $\bfT(\T)$ is \emph{complete} since for
any collection of thick subcategories $\U_i$ its infimum $\bigwedge_i
\U_i$ and its supremum $\bigvee_i \U_i$ exist.

If $X$ is an object or collection of objects in $\T$, we write
$\Thick(X)$ for the thick subcategory generated by $X$, that is, the
smallest thick subcategory containing $\T$. The triangulated category
$\T$ is \emph{finitely generated} if there is some
object $X$ such that  $\T=\Thick(X)$.

\subsection*{Compactness}
Let $L$ be a lattice. An element $x$ of $L$ is \emph{compact} if
$x\le\bigvee_{i\in I}y_i$ implies $x\le\bigvee_{i\in J}y_i$ for some
finite subset $J\subseteq I$. The lattice $L$ is \emph{compact} if it has a
greatest element that is compact.

\begin{lem}
\pushQED{\qed} The lattice $\bfT(\T)$ is compact if and only if $\T$
is finitely generated.  \qedhere
\end{lem}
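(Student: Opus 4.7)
The greatest element of $\bfT(\T)$ is $\T$ itself, so the lattice being compact means precisely that whenever $\T=\bigvee_{i\in I}\U_i$ there is a finite subset $J\subseteq I$ with $\T=\bigvee_{i\in J}\U_i$. The plan is to prove the two implications separately; both reduce quickly to a single observation about thick subcategories generated by sets.

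For the converse direction, assume $\T=\Thick(X)$ and suppose $\T=\bigvee_{i\in I}\U_i$. The key point I would establish is that the supremum can be described as $\bigvee_{i\in I}\U_i=\Thick(\bigcup_{i\in I}\U_i)$, and that this generated thick subcategory enjoys a finite-support property: every object $Y$ of $\Thick(S)$ already lies in $\Thick(S')$ for some finite subset $S'\subseteq S$. I would verify this by showing that the class of such $Y$ is closed under $\Si^{\pm 1}$, under completing triangles, and under direct summands (since unions of two finite sets are finite, and summands of objects in $\Thick(S')$ lie in $\Thick(S')$), hence forms a thick subcategory containing $S$, so it equals $\Thick(S)$. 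Applied to the generator $X\in\Thick(\bigcup_{i\in I}\U_i)$, this yields finitely many objects $Y_1,\ldots,Y_n$ drawn from finitely many $\U_{i_1},\ldots,\U_{i_m}$ with $X\in\Thick(Y_1,\ldots,Y_n)\subseteq \U_{i_1}\vee\cdots\vee\U_{i_m}$. Taking $J=\{i_1,\ldots,i_m\}$, we get $\T=\Thick(X)\subseteq\bigvee_{i\in J}\U_i$, hence equality, which proves that $\T$ is a compact element.

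For the forward direction, assume $\bfT(\T)$ is compact. Write
\[\T=\bigvee_{X\in\Ob\T}\Thick(X),\]
which holds trivially because each object $X$ lies in $\Thick(X)$. Compactness of the top element furnishes finitely many objects $X_1,\ldots,X_n$ with $\T=\Thick(X_1)\vee\cdots\vee\Thick(X_n)=\Thick(X_1\oplus\cdots\oplus X_n)$, so $\T$ is finitely generated.

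The only non-formal step is the finite-support lemma for thick subcategories in the converse direction; the rest is bookkeeping about suprema in $\bfT(\T)$. That lemma itself is routine because each of the three closure properties defining a thick subcategory involves only finitely many input objects, so finiteness of the generating set is preserved at each stage.
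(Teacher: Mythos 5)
Your argument is correct. The paper states this lemma without proof, and your two implications supply exactly the standard details it leaves implicit: the identification of $\bigvee_i\U_i$ with $\Thick(\bigcup_i\U_i)$, the finite-support property of $\Thick(S)$ (proved by checking that the objects with finite support form a thick subcategory containing $S$), and, for the forward direction, writing $\T$ as the supremum of the $\Thick(X)$ over all objects $X$ and using $\Thick(X_1)\vee\cdots\vee\Thick(X_n)=\Thick(X_1\oplus\cdots\oplus X_n)$.
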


\subsection*{Noetherianess}
A lattice $L$ is \emph{noetherian} if there is no infinite strictly
increasing chain $x_0<x_1<x_2<\ldots$ in $L$.

\begin{prop}
Let $\T$ be an essentially small triangulated category that is
finitely generated and locally noetherian. Then the lattice $\bfT(\T)$
is noetherian.
\end{prop}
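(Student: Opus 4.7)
The plan is to take the chain $\U_0\subseteq\U_1\subseteq\ldots$ and consider the union $\U=\bigcup_{n}\U_n$, which is easily seen to be a thick subcategory: closure under shifts and summands is inherited term-by-term, and for a cone of $A\to B$ with $A\in\U_i$, $B\in\U_j$, both lie in $\U_{\max(i,j)}$, so the cone does too. Since $\T$ is locally noetherian, Theorem~\ref{th:adj} applies to $\U\subseteq\T$ and gives a right adjoint $r_\U$ together with, for each $Y\in\T$, an approximation triangle
\[r_\U(Y)\lto Y\lto s_\U(Y)\lto\Si r_\U(Y)\]
with $r_\U(Y)\in\U$ and $s_\U(Y)\in\U^\perp$. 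The same holds for each $\U_n$, yielding functors $r_n$.

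Pick a generator $X$ with $\T=\Thick(X)$. Since $r_\U(X)\in\U=\bigcup_n\U_n$, there exists $N$ with $r_\U(X)\in\U_N$. For every $n\ge N$ we then have $r_\U(X)\in\U_n$ and $s_\U(X)\in\U^\perp\subseteq\U_n^\perp$, so the triangle above is \emph{also} the approximation triangle of $X$ with respect to $\U_n$; by uniqueness of adjoints, $r_n(X)\cong r_\U(X)$, and in particular $r_\U(X)\in\U_n$.

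Now I would bootstrap from $X$ to all of $\T$ using the standing assumption $\T=\Thick(X)$. Fix $n\ge N$ and consider
\[\C_n=\{Y\in\T\mid r_\U(Y)\in\U_n\}.\]
Because $r_\U\colon\T\to\T$ is a triangulated functor and $\U_n$ is a thick subcategory, the preimage $\C_n$ is again a thick subcategory of $\T$. By the previous paragraph $X\in\C_n$, so $\C_n\supseteq\Thick(X)=\T$; thus $r_\U(Y)\in\U_n$ for every $Y\in\T$. Applying this to $Y\in\U$ (where $r_\U(Y)\cong Y$) gives $\U\subseteq\U_n$, and combined with $\U_n\subseteq\U$ this forces $\U_n=\U$ for all $n\ge N$. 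Hence the chain stabilises.

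The only real obstacle is the passage from the approximation of $X$ to the approximation of arbitrary objects of $\T$; this is precisely where finite generation of $\T$ enters, via the observation that the locus $\C_n$ on which $r_n$ and $r_\U$ agree is automatically a thick subcategory, so once it contains the generator $X$ it must be all of $\T$. The locally noetherian hypothesis is used only through Theorem~\ref{th:adj} to produce the right adjoints in the first place.
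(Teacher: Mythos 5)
Your argument is correct, and it takes a genuinely different route from the paper's. Both proofs hinge on Theorem~\ref{th:adj} applied to the generator $X$, but they diverge afterwards. The paper assigns to every thick subcategory $\U$ the image $H_\U$ of $\Hom_\T(-,X_\U)\to\Hom_\T(-,X)$ in the abelianisation $\bfA(\T)$, checks via a projective cover argument that $\U\mapsto H_\U$ is injective and order-preserving, and then invokes local noetherianity a second time to see that the subobject lattice of $\Hom_\T(-,X)$ is noetherian. You instead work directly with an ascending chain: you form the union $\U$, apply Theorem~\ref{th:adj} to $\U$ itself (this is where local noetherianity does the essential work for you --- the individual $\U_n$ admitting adjoints would not by itself give anything for the union), and propagate the containment $r_\U(X)\in\U_N$ from the generator to all of $\U$ via the thick subcategory $\C_n$. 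The one step you should justify explicitly is that $\C_n$ is thick: this needs $r_\U$ to be an \emph{exact} functor, which is the standard fact that a right adjoint of an exact functor between triangulated categories is exact (see \cite{Kr2010}); once that is in place, your $\C_n$ argument is precisely the reason behind the paper's unproved assertion that $\U=\Thick(X_\U)$ whenever $\T=\Thick(X)$. Your proof is more self-contained in that it never leaves the triangulated world, whereas the paper's argument yields the stronger conclusion that $\bfT(\T)$ order-embeds into the noetherian lattice of subobjects of $\Hom_\T(-,X)$.
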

\begin{proof}
  Let $\T=\Thick(X)$ and write $H=\Hom_\T(-,X)$. For each thick
  subcategory $\U$ let $\p_\U\colon X_\U\to X$ be the universal
  morphism from an object in $\U$ ending at $X$, which exists by
  Theorem~\ref{th:adj}. Note that $\U=\Thick(X_\U)$ since
  $\T=\Thick(X)$.  Denote by $H_\U$ the image of the induced morphism
  $\Hom_\T(-,X_\U)\to H$ in $\bfA(\T)$, and observe that the induced
  morphism $\pi_\U\colon \Hom_\T(-,X_\U)\to H_\U$ is a projective
  cover. This follows from the uniqueness of $\p_\U$ with
  Lemma~\ref{le:procov}, since any endomorphism $\Hom_\T(-,X_\U)\to
  \Hom_\T(-,X_\U)$ commuting with $\pi_\U$ is an isomorphism.

Let $\V$ be another thick subcategory of $\T$. Then $H_\U=H_\V$
implies $\U=\V$. Indeed, $H_\U=H_\V$ implies that their projective
covers are isomorphic. Thus $X_\U\cong X_\V$, and therefore
\[\U=\Thick(X_\U)=\Thick(X_\V)=\V.\]
Clearly, $\U\subseteq\V$ implies $H_\U\subseteq H_\V$. It follows that
$\bfT(\T)$ is noetherian, since the lattice of subobjects of $H$ in
$\bfA(\T)$ is noetherian.
\end{proof}

\begin{exm} 
Recall from Example~\ref{ex:noeth} that representations of the quiver
\[1\lto 2\lto 3\lto 4\lto\cdots\]
give rise to a triangulated category $\bfD^b(\A)$ that is locally
noetherian. The full subcategories $\A_n\subseteq\A$ consisting of all
representations with support contained in $\{1,\ldots,n\}$ yield an
infinite strictly increasing chain
$\bfD^b(\A_1)\subseteq\bfD^b(\A_2)\subseteq\ldots$ of thick
subcategories in $\bfD^b(\A)$.
\end{exm}

\subsection*{Complements}

Assigning to a thick subcategory $\U$ its orthogonal subcategories
$\U^\perp$ and $^\perp\U$ yields two order reversing maps
$\bfT(\T)\to\bfT(\T)$.  These maps are of interest because we have
for two objects $X,Y$ in $\T$
\[\Hom_\T^*(X,Y)=0 \quad\Longleftrightarrow \quad\Thick(X)^\perp\ge\Thick(Y) \]
where
\[\Hom_\T^*(X,Y)=\bigoplus_{n\in\bbZ}\Hom_\T(X,\Si^nY).\]
Let us collect the basic properties of both maps when $\T$
is locally finite.

\begin{prop}
Let $\T$ be an essentially small triangulated category and suppose
that $\T$ is locally finite.
\begin{enumerate}
\item The maps $\bfT(\T)\to\bfT(\T)$ taking $\U$ to $\U^\perp$ and
  $^\perp\U$ are mutually inverse. Thus the lattice $\bfT(\T)$ is
  self-dual.
\item Let $\U$ be a thick subcategory. Then $\U\wedge\U^\perp=0$ and
  $\U\vee\U^\perp=\T$.
\item Let $\V\subseteq\U\subseteq\T$ be thick subcategories.  Then the
  quotient $\U/\V$ is a locally finite triangulated category, and there
  is a lattice isomorphism \[ [\V,\U]=\{\X\in\bfT(\T)\mid\V\subseteq
 \X\subseteq\U\}\stackrel{\sim}\lto\bfT(\U/\V).\]
The map takes $\X\subseteq\U$ to $\X/\V$.
\end{enumerate}
\end{prop}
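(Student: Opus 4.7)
For part (1), my plan is to invoke Corollary~\ref{co:perp} twice, once in $\T$ and once in $\T^\op$. Since $\T$ is locally finite, both $\T$ and $\T^\op$ are locally noetherian, and a thick subcategory $\U$ of $\T$ is the same thing as a thick subcategory of $\T^\op$. Under the reversal of arrows the two orthogonal operations get swapped, so Corollary~\ref{co:perp} applied to $\T$ gives ${}^\perp(\U^\perp)=\U$, while the same corollary applied to $\T^\op$ gives $({}^\perp\U)^\perp=\U$. Together these show that $\U\mapsto\U^\perp$ and $\U\mapsto{}^\perp\U$ are mutually inverse bijections on $\bfT(\T)$, and both are order-reversing, so the lattice is self-dual.

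For part (2), the intersection $\U\wedge\U^\perp$ is trivial because any $X$ in both satisfies $\Hom_\T(X,X)=0$, forcing $X=0$. For the supremum, I would apply Theorem~\ref{th:adj} to get a right adjoint to the inclusion $\U\hookrightarrow\T$; by Lemma~\ref{le:tria-loc} this produces, for every $X\in\T$, an exact triangle $U\to X\to V\to\Si U$ with $U\in\U$ and $V\in\U^\perp$. Hence every object of $\T$ lies in any thick subcategory containing $\U\cup\U^\perp$, so $\U\vee\U^\perp=\T$.

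For part (3), I first want to upgrade Corollary~\ref{co:quot}, which gives the locally noetherian conclusion, to the locally finite setting. Since $\T$ is locally finite, both $\T$ and $\T^\op$ are locally noetherian; applying Corollary~\ref{co:quot} in $\T$ shows that $\U$ and $\T/\U$ are locally noetherian, while applying it in $\T^\op$ shows that $\U^\op$ and $(\T/\U)^\op$ are locally noetherian. Hence $\U$ is locally finite, and then the same argument applied to $\V\subseteq\U$ shows $\U/\V$ is locally finite. For the lattice isomorphism, let $q\colon\U\to\U/\V$ be the Verdier quotient. I define $\Phi\colon[\V,\U]\to\bfT(\U/\V)$ by $\Phi(\X)=\X/\V$ (which is thick since $\X$ is thick and contains $\V$), and a candidate inverse $\Psi(\Y)=q^{-1}(\Y)$, the full subcategory of objects of $\U$ whose image lies in $\Y$. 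The identity $\Psi\Phi=\mathrm{id}$ is the subtle step: the inclusion $\X\subseteq q^{-1}(q(\X))$ is obvious, and conversely if $q(X)\in\X/\V$ then there is a roof $X\leftarrow Z\to X'$ with $X'\in\X$ and cones in $\V\subseteq\X$, so the exact triangles $Z\to X\to C\to\Si Z$ and $Z\to X'\to C'\to\Si Z$ together with the thickness of $\X$ force $X\in\X$. The identity $\Phi\Psi=\mathrm{id}$ is easier from the surjectivity of $q$ on isomorphism classes. Both maps obviously preserve inclusions.

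The main technical point is really the equality $\X=q^{-1}(q(\X))$ in part (3); everything else in the proposition is a rather formal consequence of the adjoint existence theorem and the Corollary~\ref{co:perp} $=$ Corollary~\ref{co:quot} package. The self-duality in part (1) and the "complement" property in part (2) will follow immediately once one notices how $\T^\op$-duality interacts with orthogonality.
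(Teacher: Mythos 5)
Your proof is correct and follows essentially the same route as the paper's (which is very terse): part (1) via Corollary~\ref{co:perp} applied to both $\T$ and $\T^\op$, part (2) via the triangle $U\to X\to V\to\Si U$ supplied by Theorem~\ref{th:adj}, and part (3) via Corollary~\ref{co:quot} upgraded by opposite-category duality, with the inverse of $\X\mapsto\X/\V$ given by taking preimages under the localisation functor. You merely supply details the paper leaves implicit, most usefully the verification of $\X=q^{-1}(q(\X))$ using a roof with both cones in $\V\subseteq\X$ and the thickness of $\X$.
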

\begin{proof} 
(1) We have $^\perp(\U^\perp)=\U=(^\perp\U)^\perp$ by
  Corollary~\ref{co:perp}.

  (2) Clearly, $\U\cap\U^\perp=0$.  On the other hand, each object $X$
  in $\T$ fits into an exact triangle $U\to X\to V\to\Si U$ with
  $U\in\U$ and $V\in\U^\perp$, by Theorem~\ref{th:adj}. Thus
  $\U\vee\U^\perp=\T$.

  (3) The category $\U/\V$ is locally finite by
  Corollary~\ref{co:quot}, and the inverse map $\bfT(\U/\V)\to[\V,\U]$
  takes $\X$ to its inverse image under the localisation functor
  $\U\to\U/\V$.
\end{proof}

This proposition says that the lattice is \emph{relatively
  complemented}, that is, each interval is complemented. A lattice $L$
is \emph{complemented} if for each $x\in L$ there exists $y\in L$ such
that $x\vee y=1$ and $x\wedge y=0$.

If $\T$ is locally finite and admits a Nakayama functor
$N\colon\T\xto{\sim}\T$, then this induces a lattice automorphism of
$\bfT(\T)$ by taking a thick subcategory $\U$ to $N\U$. It follows
from the definition of $N$ that $^\perp N\U=\U^\perp$. Thus
$N\U=(\U^\perp)^\perp$ by Corollary~\ref{co:perp}.

\subsection*{Finiteness}
There are further finiteness results for the lattice $\bfT(\T)$ if
$\T$ is locally finite.

\begin{prop}\label{pr:lattice}
Let $\T$ be an essentially small triangulated category. If $\T$ is
finitely generated and locally finite, then $\T$ has only finitely
many thick subcategories.
\end{prop}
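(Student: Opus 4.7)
I would continue the approach used in the preceding proposition, refining the injection $\U\mapsto H_\U$ so that its image is visibly finite. Fix an object $X$ with $\T=\Thick(X)$ and set $H=\Hom_\T(-,X)$; for each thick subcategory $\U$, take the universal morphism $X_\U\to X$ from Theorem~\ref{th:adj}, and let $H_\U\subseteq H$ be the image of the induced map $\Hom_\T(-,X_\U)\to H$ in $\bfA(\T)$. The preceding proof already shows that $\U\mapsto H_\U$ is injective, so it is enough to bound the number of possible values of $H_\U$.

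For each indecomposable object $Z$ of $\T$, let $H_Z\subseteq H$ denote the subobject spanned by the images of $f_*\colon\Hom_\T(-,Z)\to H$ for all $f\in\Hom_\T(Z,X)$; crucially, $H_Z$ depends only on the isomorphism class of $Z$, not on $\U$. The universal property of $X_\U\to X$ forces every morphism $Z\to X$ with $Z\in\U$ to factor through $X_\U$, so $H_Z\subseteq H_\U$ whenever $Z\in\U$; conversely, $H_\U$ is generated by the images coming from the indecomposable summands of $X_\U$, all of which lie in $\U$. Thus
\[H_\U\;=\;\sum_{Z\in\U\text{ indecomposable}}H_Z.\]

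By Proposition~\ref{pr:fin} applied to the locally finite category $\T$ and the object $X$, only finitely many isomorphism classes of indecomposables $Z$ satisfy $\Hom_\T(Z,X)\neq 0$; call this finite set $\mathcal{Z}$. Since $H_Z=0$ for $Z\notin\mathcal{Z}$, the formula reduces to $H_\U=\sum_{Z\in\mathcal{Z}\cap\U}H_Z$, so $H_\U$ depends only on the subset $\mathcal{Z}\cap\U$ of the finite set $\mathcal{Z}$. Consequently there are at most $2^{|\mathcal{Z}|}$ possible values of $H_\U$, and therefore at most $2^{|\mathcal{Z}|}$ thick subcategories of $\T$.

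The main obstacle is the additive decomposition $H_\U=\sum_{Z\in\U\text{ indec}}H_Z$: it needs the universal property of $X_\U$ in both directions, to ensure that every indecomposable $Z\in\U$ with nonzero maps to $X$ contributes its full share $H_Z$ to $H_\U$ while no indecomposables outside $\mathcal{Z}$ can contribute anything beyond this sum.
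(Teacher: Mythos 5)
Your proof is correct and rests on the same two pillars as the paper's: the universal morphism $X_\U\to X$ from Theorem~\ref{th:adj} and the finiteness, via Proposition~\ref{pr:fin}, of the set $\mathcal{Z}$ of indecomposables mapping non-trivially to $X$. The only difference is that the paper concludes directly from $\U=\Thick(X_\U)$ together with the fact that the indecomposable summands of $X_\U$ lie in $\mathcal{Z}$, whereas you take the longer route through the subobjects $H_\U\subseteq\Hom_\T(-,X)$ and the injectivity of $\U\mapsto H_\U$ established for the noetherianness result; both arguments yield the same bound of $2^{|\mathcal{Z}|}$.
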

\begin{proof}
Let $\T=\Thick(X)$. For each thick subcategory $\U$ let $X_\U\to X$ be
the universal morphism from an object in $\U$ ending at $X$, which
exists by Theorem~\ref{th:adj}. Note that $\U=\Thick(X_\U)$ since
$\T=\Thick(X)$. Each indecomposable direct summand $X'$ of $X_\U$
satisfies $\Hom_\T(X',X)\neq 0$. There are only finitely many
isomorphism classes of such objects by Proposition~\ref{pr:fin}. It
follows that $\bfT(\T)$ is finite.
\end{proof}

Auslander--Reiten theory is used in an essential way for proving the
following.

\begin{thm}\label{th:fingen}
Let $\T$ be an essentially small and locally finite triangulated
category.  If $\T$ is connected then $\T$ is finitely generated and
has therefore only finitely many thick subcategories.
\end{thm}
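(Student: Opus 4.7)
The plan is to use the structure of the Auslander--Reiten quiver to exhibit an explicit finite generator of $\T$. Once $\T$ is known to be finitely generated, Proposition~\ref{pr:lattice} delivers the second half of the conclusion for free, so the real work is concentrated in producing such a generator.

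I would first combine Proposition~\ref{pr:connect} with Theorem~\ref{th:AR}: connectedness of $\T$ forces the Auslander--Reiten quiver to be connected, so it coincides with a single quotient $\bbZ\De/G$ for a Dynkin tree $\De$ and some group $G$ of translation quiver automorphisms. Fix any orientation of $\De$ and pick indecomposables $X_v$ of $\T$ whose isomorphism classes correspond to the section $\{(0,v):v\in\De_0\}\subset\bbZ\De$. Because $\De_0$ is finite, the family $\{X_v\}$ is finite; put $X=\bigoplus_v X_v$ and $\U=\Thick(X)$. The task reduces to showing that the image in $\bbZ\De/G$ of every vertex $(n,v)\in\bbZ\De$ lies in $\U$.

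The core of the proof is a double induction: a main induction on $|n|$, and within each level a sub-induction along a fixed topological order $v_1,\dots,v_r$ of $\De_0$ (so $v_j\to v_k$ in $\De$ forces $j<k$). The base case $n=0$ is the choice of section. For the forward step $n\to n+1$, I would process the $v_i$ in order and use Corollary~\ref{co:AR} to produce the Auslander--Reiten triangle starting at $(n,v_i)$,
\[(n,v_i)\lto Y_i\lto (n+1,v_i)\lto\Si(n,v_i).\]
By Lemma~\ref{le:AR} the indecomposable summands of $Y_i$ are precisely the targets of the arrows of $\bbZ\De$ leaving $(n,v_i)$: namely the vertices $(n,u)$ for $v_i\to u$ in $\De$, which sit at level $n$ and are in $\U$ by the main induction hypothesis, together with the vertices $(n+1,v_j)$ for $v_j\to v_i$ in $\De$, which have $j<i$ by our topological order and are in $\U$ by the sub-induction hypothesis at level $n+1$. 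Hence $Y_i\in\U$, and the triangle then forces $(n+1,v_i)\in\U$. The backward step $n\to-(n+1)$ is strictly dual, using Auslander--Reiten triangles ending at $(-n,v_i)$ and processing $\De_0$ in reverse topological order.

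Once this procedure has placed every $(n,v)\in\bbZ\De$ into $\U$, we conclude $\U=\T$, so $\T=\Thick(X)$ is finitely generated and Proposition~\ref{pr:lattice} finishes the proof. The real bookkeeping obstacle is the sub-induction: the middle term of an Auslander--Reiten triangle in $\bbZ\De$ always straddles two consecutive $\t$-levels, so one cannot hope to propagate from one level to the next without first committing to a topological order on $\De$ that dictates the order in which the new indecomposables are introduced.
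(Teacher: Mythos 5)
Your argument is essentially the paper's: identify the Auslander--Reiten quiver with $\bbZ\De/G$ for a finite tree $\De$, take the thick subcategory generated by a slice, and propagate outward level by level using Auslander--Reiten triangles, with Lemma~\ref{le:AR} controlling the summands of the middle terms. The only real difference is bookkeeping: the paper first reorients $\De$ so that it has no path of length $>1$ (possible since $\bbZ\De$ is independent of orientation) and then runs a single induction on the integer $d(\t^r x)$, whereas you keep an arbitrary orientation and pay for it with a sub-induction along a topological order of $\De_0$; both handle the fact that the middle term straddles two $\t$-levels, and neither buys anything substantial over the other.

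One citation needs fixing. Theorem~\ref{th:AR} assumes $\T$ is $k$-linear with morphism spaces of finite length over a commutative ring $k$, a hypothesis absent from Theorem~\ref{th:fingen}; the correct reference is Lemma~\ref{le:tree} (Amiot), which applies to any locally finite $\T$ and yields $\bbZ\De/G$ for a finite (not necessarily Dynkin) tree after removing possible loops --- finiteness of $\De_0$ is all your argument uses anyway. You should also say a word about the removed loops: a loop at $X$ forces $X=\t X$, so the corresponding Auslander--Reiten triangle relates an object to itself and the inductive step is vacuous there; this is harmless but worth acknowledging, since Lemma~\ref{le:AR} would otherwise contribute an extra summand to $Y_i$ not accounted for by your list of arrows in $\bbZ\De$.
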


We need the following lemma.
 
\begin{lem}[Amiot]\label{le:tree}
Let $\T$ be an essentially small triangulated category that is locally
finite. Then any connected component of the Auslander--Reiten quiver
of $\T$ is -- after removing possible loops -- of the form $\bbZ
\De/G$ for some finite tree $\De$ and some group $G$ of automorphisms
of $\bbZ\De$.
\end{lem}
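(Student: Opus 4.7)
The plan is to apply Riedtmann's structure theorem for stable translation quivers, together with the Happel--Preiser--Ringel classification of such quivers that admit a positive subadditive function, following essentially the template of Proposition~\ref{pr:AR} and Theorem~\ref{th:AR} but without insisting on the $k$-linear hypothesis.

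First I would verify that the Auslander--Reiten quiver of $\T$ is a stable valued translation quiver in which loops may occur. This uses Corollary~\ref{co:AR} (existence of Auslander--Reiten triangles starting and ending at each indecomposable) together with the Krull--Remak--Schmidt decomposition of the middle term of an AR triangle to extract the valuations $\d_{Y,Z}$. Let $C$ be a connected component. After deleting loops, $C$ is a connected stable translation quiver without loops, to which Riedtmann's structure theorem applies, yielding an isomorphism $C\cong\bbZ\De/G$ for some tree $\De$ and some admissible group $G$ of translation-quiver automorphisms of $\bbZ\De$.

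Next I would introduce the length function $\ell(X)=\ell_{\bfA(\T)}\bigl(\Hom_\T(-,X)\bigr)$, which is a well-defined positive integer for every indecomposable $X$ by the local finiteness of $\T$. I would then check that the two identities of Proposition~\ref{pr:AR},
\[2\ell(Z)=\ell(Z)+\ell(\t Z)=2+\sum_{Y\to Z}\d_{Y,Z}\ell(Y),\]
continue to hold here. The right-hand equality is obtained by taking the alternating sum of lengths in the exact sequence of Lemma~\ref{le:cover} attached to the AR triangle at $Z$. For the left-hand identity $\ell(\t Z)=\ell(Z)$ one combines the alternative description $\ell(X)=\sum_C\ell_{\End_\T(C)}\Hom_\T(C,X)$, the length-preserving duality~\eqref{eq:dual}, the invariance of $\ell$ under the autoequivalence $\Si$, and the Matlis-type isomorphism~\eqref{eq:tau} of Remark~\ref{re:tau}.

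Consequently $\ell$ is a strictly positive subadditive function on the stable translation quiver $\bbZ\De/G$ with constant excess $2$ at every vertex. The Happel--Preiser--Ringel classification of stable translation quivers carrying such a function then forces $\De$ to be of Dynkin type, which is in particular a finite tree, completing the argument. The main obstacle is verifying the equality $\ell(\t Z)=\ell(Z)$ without access to a Nakayama functor: one must confirm that~\eqref{eq:dual} preserves the length of functors, and that the Matlis-type duality underlying~\eqref{eq:tau} preserves length over the local ring $\End_\T(Z)$. Granted these length-preservation statements, the rest of the proof is the standard HPR template applied to $\ell$.
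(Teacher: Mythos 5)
Your overall plan (remove loops, apply Riedtmann, then force finiteness of $\De$ via a Happel--Preiser--Ringel subadditive function) diverges from the paper at the crucial finiteness step, and it is exactly there that it has a genuine gap. Lemma~\ref{le:tree} assumes only that $\T$ is locally finite; it does \emph{not} assume the hypothesis of Proposition~\ref{pr:AR} and Theorem~\ref{th:AR} that $\T$ is $k$-linear with morphism spaces of finite length over a commutative ring $k$. The identity $\ell(\t Z)=\ell(Z)$, which you need in order to make $\ell$ a subadditive function on the stable translation quiver, is proved in the paper only under that linearity hypothesis: it rests on the Matlis-type duality \eqref{eq:tau} over the local artinian ring $A=\End_\T(Z)$, and $\Hom_A(-,E)$ need not preserve length over a general non-commutative artinian local ring (this is a feature of Artin algebras, not of arbitrary artinian rings). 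The same problem affects the valuation symmetry of Lemma~\ref{le:val'}, which is also needed to have a valued translation quiver in the sense required by the HPR classification and whose proof in the paper explicitly uses $\ell_k$. You name this as ``the main obstacle'' and then proceed ``granted these length-preservation statements'' --- but granting them is precisely what is missing, and it is the reason the paper states the Dynkin conclusion only in Theorem~\ref{th:AR} (with the linearity hypothesis) while Lemma~\ref{le:tree} claims only that $\De$ is a finite tree.

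The paper's own proof sidesteps all of this: it quotes Amiot's Proposition~4.1.1 for the fact that an irreducible morphism $X\to X$ forces $X=\t X$ (so that deleting loops still leaves a stable translation quiver --- a point you also pass over silently, though it is needed before Riedtmann's Struktursatz can be invoked), and then quotes Amiot's Lemma~4.2.2 for the finiteness of the tree, which is obtained directly from local finiteness of $\T$ rather than from the HPR machinery. To repair your argument you would either have to supply the length-preservation statements in the non-linear setting (which is doubtful in general) or replace the HPR step by a direct finiteness argument of Amiot's type.
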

\begin{proof}
If $X\to X$ is an irreducible morphism in $\T$ then $X=\t X$, by
\cite[Proposition~4.1.1]{Am2007}.  After removing such arrows $X\to
X$, the Auslander--Reiten quiver of $\T$ is a stable translation
quiver and therefore of the form $\bbZ \De/G$ by Riedtmann's
Struktursatz \cite{Ri1980}. The tree is finite by
\cite[Lemma~4.2.2]{Am2007}.
\end{proof}

\begin{proof}[Proof of Theorem~\ref{th:fingen}]
The Auslander--Reiten quiver of $\T$ is connected by
Proposition~\ref{pr:connect} and therefore -- modulo loops -- of the
form $\bbZ \De/G$ for some finite tree $\De$ and some group $G$ of
automorphisms by Lemma~\ref{le:tree}.  Note that $\bbZ\De$ does not
depend on the orientation of $\De$. Thus we may assume that $\De$ has
no path of length $>1$.  Then there are two types of vertices. Call a
vertex $x\in\De_0$ \emph{minimal} if there is no arrow from $\De$
ending at $x$, and \emph{maximal} if there is no arrow from $\De$
starting at $x$.

Let $\pi\colon\bbZ\De\to\bbZ\De/G$ be the canonical map and write each
vertex of $\bbZ\De$ as $\t^r x$ with $r\in\bbZ$ and $x\in\De_0$.  We
claim that $\T=\Thick(T)$ for $T=\coprod_{x\in\De_0}\pi(x)$. Thus we
fix an indecomposable object $X=\pi(y)$ in $\T$ and need to show that
$X$ belongs to $\Thick(T)$. Define for $y=\t^r x$
\[d(y)=\begin{cases} 2r,\quad&\text{if $x$ is maximal},\\
2r-1,&\text{if $x$ is minimal},
\end{cases}\]
and use induction on $d=d(y)$ as follows.  The cases $d=0$ and $d=-1$
are clear. Suppose now $d>0$ and consider the Auslander--Reiten
triangle $X\to Y\to \t^{-1} X\to \Si X$ starting at $X$. Then each
indecomposable direct summand of $Y$ is of the form $\pi(y')$ with an
arrow $y\to y'$ in $\bbZ\De$, by Lemma~\ref{le:AR}. We have
$d(y')=d-1$ and $d(\t^{-1}y)=d-2$. Thus $Y$ and $\t^{-1}X$ belong to
$\Thick(T)$, and it follows that $X$ is in $\Thick(T)$. The case $d<0$
is similar.

The finiteness of $\bfT(\T)$ follows from Proposition~\ref{pr:lattice}.
\end{proof}

\section{Simply connected triangulated categories}

Let $\T$ be a triangulated category that is essentially small and
locally finite.  We say that $\T$ is \emph{simply connected} if
the Auslander--Reiten quiver of $\T$ is connected and has no oriented
cycle. Here, an \emph{oriented cycle} is a path of length $>0$
starting and ending at the same vertex.  Note that there exists an
oriented cycle if and only if there is a chain of non-invertible
non-zero morphisms $X_0\to X_1\to\cdots\to X_n=X_0$ between
indecomposable objects; see Proposition~\ref{pr:connect}.

\subsection*{Rings of finite representation type}

For any ring $A$, we denote by $\mod A$ the category of finitely
presented $A$-modules. Recall that $A$ has \emph{finite representation
  type} if $A$ is artinian and there are only finitely many
isomorphism classes of finitely presented indecomposable $A$-modules.

\begin{thm}\label{th:frt}
For a triangulated category $\T$ the following are equivalent.
\begin{enumerate}
\item The triangulated category $\T$ is essentially small, algebraic,
  locally finite, and simply connected.
\item There exists a connected hereditary artinian ring $A$ of finite
  representation type such that $\T$ is equivalent to the bounded
  derived category $\bfD^b(\mod A)$.
\end{enumerate}
In this case the Auslander--Reiten quiver of $\T$ is of the form
$\bbZ\De$ for some tree $\De$ of Dynkin type.
\end{thm}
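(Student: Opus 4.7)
The plan is to treat the two directions separately, with most of the effort going into (1)$\Rightarrow$(2). The implication (2)$\Rightarrow$(1) is a short verification: if $A$ is a connected hereditary artinian ring of finite representation type, then $\mod A$ is essentially small, hereditary, and satisfies the conditions of Proposition~\ref{pr:fin}, so by the third item of the examples in Section~2 the category $\bfD^b(\mod A)$ is locally finite. The standard dg enhancement via bounded complexes of projectives makes it algebraic. Simple connectedness follows from the classical fact that the Auslander--Reiten quiver of $\bfD^b(\mod A)$ is of the form $\bbZ\De$ where $\De$ is the valued diagram of $A$; this $\De$ is of Dynkin type since $A$ has finite representation type, and $\bbZ\De$ has no oriented cycle.

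For (1)$\Rightarrow$(2), I would first identify the Auslander--Reiten quiver of $\T$. It is connected by Proposition~\ref{pr:connect}, so $\T$ is finitely generated by Theorem~\ref{th:fingen}; Lemma~\ref{le:tree} then presents the quiver, modulo loops, as $\bbZ\De/G$ for a finite tree $\De$ and a group $G$. Simple connectedness excludes loops (a loop at $X$ is a length-one oriented cycle) as well as nontrivial elements of $G$: if $\t^r X \cong X$ for some $r > 0$, iterating Auslander--Reiten triangles and applying Proposition~\ref{pr:connect} produces an oriented cycle through $X$. Theorem~\ref{th:AR} forces $\De$ to be of Dynkin type, so the quiver is exactly $\bbZ\De$.

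Next, I would construct a tilting object. Fix a section of $\bbZ\De$, that is, a full subquiver isomorphic to $\De$ meeting each $\t$-orbit exactly once; let $T_1, \ldots, T_n$ be the corresponding indecomposables in $\T$ and set $T = T_1 \oplus \cdots \oplus T_n$. The path description of morphisms in $\bbZ\De$ together with Proposition~\ref{pr:connect} yields $\Hom_\T(T, \Si^k T) = 0$ for $k \neq 0$; the induction from the proof of Theorem~\ref{th:fingen}, applied to the whole section in place of a single vertex, gives $\Thick(T) = \T$; and the arrows incident to the section, combined with Remark~\ref{re:tau}, identify $A := \End_\T(T)$ as a connected artinian ring whose valued diagram is the Dynkin tree $\De$. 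The tree shape makes $A$ hereditary, and finite representation type follows from the Dynkin classification.

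The final step invokes the algebraic hypothesis. By Keller's tilting theorem for algebraic triangulated categories, $\RHom_\T(T, -)$ induces a triangle equivalence between $\T$ and the category of perfect dg-modules over the dg endomorphism algebra of $T$. The Ext vanishing established above forces this dg algebra to be quasi-isomorphic to $A$ concentrated in degree zero, and since $A$ is hereditary artinian the perfect complexes coincide with $\bfD^b(\mod A)$; this produces the desired equivalence. The principal obstacle is exactly this last passage: without a dg enhancement one cannot in general reconstruct $\T$ as a derived category of modules, as Example~(6) in Section~2 already illustrates by exhibiting a locally finite triangulated category admitting no algebraic model.
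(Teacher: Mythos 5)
Your proposal is correct and follows essentially the same route as the paper: extract the Auslander--Reiten quiver $\bbZ\De$, build a tilting object $T$ from a slice, identify $A=\End_\T(T)$ as a connected hereditary artinian ring (the paper does this explicitly by choosing $\De$ with no paths of length $>1$ and realising $A$ as the tensor algebra of a species, which is where your ``the tree shape makes $A$ hereditary'' gets its rigorous content), and invoke Keller's theorem for algebraic triangulated categories to obtain $\T\simeq\bfK^b(\add T)\simeq\bfD^b(\mod A)$. The converse verification also matches the paper's.
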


Let us explain how the ring $A$ is obtained from $\T$.  A
\emph{species} $(K_i,{_iE_j})_{i,j\in I}$ consists of a family of
division rings $K_i$ and a family of $K_i-K_j$-bimodules $_iE_j$.
There is an associated tensor algebra $\bigoplus_{n\ge 0}M^n$,
where \[M^0=\prod_{i\in I}K_i,\quad M^1=\bigoplus_{i,j\in I}{_iE_j},
\quad M^n=M^1\otimes_{M^0}\ldots\otimes_{M^0}M^1\quad\text{for}\quad
n>1,\] and multiplication is induced by the tensor product.

Let $\T$ be a triangulated category and suppose that $\T$ is locally
finite and simply connected.  Then the Auslander--Reiten quiver is of
the form $\bbZ\De$ for some finite tree by Lemma~\ref{le:tree}. In
fact, we may assume that $\De$ has no path of length $>1$, since
$\bbZ\De$ does not depend on the orientation of $\De$.  Choose
indecomposable objects $T_i$ for each vertex $i\in\De_0$. Define
$K_i=\End_\T(T_i)$ and $_iE_j=\Hom_\T(T_j,T_i)$ for
$i,j\in\De_0$. Observe that each $K_i$ is a division ring by
Proposition~\ref{pr:connect}, and that each bimodule $_iE_j$ is of
finite length on either side by Proposition~\ref{pr:fin}.  Then the
tensor algebra corresponding to the species $(K_i,{_iE_j})_{i,j\in
  \De_0}$ is isomorphic to the endomorphism ring of
$T=\coprod_{i\in\De_0}T_i$. The proof is straightforward, using the
fact that there are no paths of length $>1$ in $\De$.  We denote this
ring by $A$. Observe that $A$ is hereditary and artinian, since
$K=\prod_{i\in \De_0}K_i$ is semisimple and $\bigoplus_{i,j\in\De_0}{_iE_j}$
is finitely generated over $K$ on either side.

\begin{lem} 
The object $T$ is a tilting object, that is, $\T$ is generated by $T$ and 
$\Hom_\T(T,\Si^n T)=0$ for all $n\neq 0$. 
\end{lem}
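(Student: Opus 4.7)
The lemma splits into two claims: $\Thick(T)=\T$ and $\Hom_\T(T,\Si^n T)=0$ for all $n\neq 0$. For the generation claim, I would reuse the argument of Theorem~\ref{th:fingen}. Simple connectedness excludes oriented cycles, hence loops, and also forces the group $G$ in Lemma~\ref{le:tree} to be trivial: any nontrivial $G$-element identifies some $v\in\bbZ\De$ with a translate $\t^r\rho(v)$ differing from $v$, and then a suitable $\t$-monotone path in $\bbZ\De$ projects to an oriented cycle through $[v]$ in the quotient. Hence the AR quiver is $\bbZ\De$ for a finite tree $\De$. Reorient $\De$ so no path has length $>1$; this is harmless since $\bbZ\De$ is orientation-independent. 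The $T_i$, $i\in\De_0$, then occupy the zero-slice $\{0\}\times\De_0$ of $\bbZ\De$, and the induction on the function $d$ via the AR triangles $X\to Y\to\t^{-1}X\to\Si X$ from the proof of Theorem~\ref{th:fingen} delivers $\T=\Thick(T)$ verbatim.

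For Ext-vanishing, my plan is to exploit the $\t$-grading $\ell$ on $\bbZ\De$ given by the first coordinate. Arrows of $\bbZ\De$ are non-decreasing in $\ell$, so paths from the zero-slice reach only vertices with $\ell\geq 0$; by the contrapositive of Proposition~\ref{pr:connect}, $\Hom_\T(T_j,X)=0$ whenever the indecomposable $X$ has $\ell(X)<0$. The suspension $\Si$ acts on $\bbZ\De$ as a translation-quiver automorphism commuting with $\t$, hence has the form $\t^s\rho$ for some $s\in\bbZ$ and some graph automorphism $\rho$ of $\De$. Using the Nakayama/Serre relation $N=\Si\comp\t$ one argues $s<0$, so that $\Si$ strictly raises $\ell$ by $|s|\geq 1$. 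Then for $n<0$ the vertex $\Si^n T_i$ sits at $\ell=n|s|<0$, giving $\Hom_\T(T_j,\Si^n T_i)=0$ at once. For $n>0$, Serre duality reduces the vanishing of $\Hom_\T(T_j,\Si^n T_i)$ to the vanishing of $\Hom_\T(T_i,\Si^{1-n}\t T_j)$, whose target indecomposable sits at $\ell=-1-(n-1)|s|\leq -1$; so the same path argument applies.

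The main obstacle is the verification that $s<0$, i.e.\ that $\Si$ strictly raises the $\t$-level on $\bbZ\De$. This amounts to pinning down the $\t$-shift component of $\Si$ within the classification of translation-quiver automorphisms of $\bbZ\De$. The cleanest input seems to be the Nakayama functor $N=\Si\comp\t$, whose action on $\bbZ\De$ can be read off from the AR triangles themselves (since $NT_i$ represents the Serre dual of $\Hom_\T(T_i,-)$ and sits strictly forward in the $\t$-direction relative to $T_i$). Once $s<0$ is established, both parts of the Ext-vanishing reduce to the uniform $\ell$-counting argument above.
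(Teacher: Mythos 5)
Your proposal is correct and follows essentially the same route as the paper: generation is quoted from the proof of Theorem~\ref{th:fingen}, and the Ext-vanishing is the paper's path argument in $\bbZ\De$ (Proposition~\ref{pr:connect} plus the Serre duality of Remark~\ref{re:tau}) repackaged through the level function on $\bbZ\De$. The one step you defer, that $\Si$ strictly raises the level, is closed in the paper exactly along the line you indicate: the Auslander--Reiten triangle yields a path $X\to Y\to \t^{-1}X\to\cdots\to\Si X$, or equivalently $\Hom_\T(Z,NZ)\neq 0$ gives $\ell(NZ)\ge\ell(Z)$ and hence $\ell(\Si Z)=\ell(NZ)+1>\ell(Z)$ --- note that only this weak monotonicity of $N$ is needed (and is all that is true in general), not your stronger claim that $NZ$ sits strictly forward of $Z$.
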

\begin{proof}
We use Proposition~\ref{pr:connect} which gives a path in the
Auslander--Reiten quiver for each non-zero morphism between
indecomposable objects. In particular, there is for each
indecomposable object $X$ a path $X\to Y\to \t^{-1} X\to \cdots\to\Si X$, provided there
is a non-zero morphism starting in $X$ which is not a section.

Suppose that $\Hom_\T(T_i,\Si^n T_j)\neq 0$ and consider the following two cases:

$n>0$. From Remark~\ref{re:tau} one has $\Hom_\T(\Si^{n-1}
T_j,\t T_i)\neq 0$. Thus there is in $\bbZ\De$ a
path \[T_j\to\cdots\to\Si^{n-1}T_j\to\cdots\to \t T_i\to T_i'\to T_i.\]

$n<0$. There is in $\bbZ\De$ a path \[T_i\to\cdots\to\Si^n T_j\to\cdots\to \t
T_j\to T_j'\to T_j.\] 

In both cases, this contradicts the choice of $T_i,T_j\in\De_0$ and the
fact that $\De$ has no path of length $>1$. It follows that $n=0$.

The fact that $\T=\Thick(T)$ follows from the proof of
Theorem~\ref{th:fingen}.
\end{proof}

\begin{proof}[Proof of Theorem~\ref{th:frt}]
(1) $\Rightarrow$ (2): The assumptions on $\T$ yield a tilting object
  $T=\coprod_{i\in\De_0}T_i$, and its endomorphism ring $A=\End_\T(T)$
  is hereditary artinian.  It follows that there are equivalences
\[ \T\xleftarrow{\sim}\bfK^b(\add T)\xto{\sim}\bfD^b(\mod A).\]
For the first equivalence, see \cite[2.1]{Ke1990}, while the second is
clear from the fact that $\Hom_\T(T,-)$ identifies $\add T$ with the
category of finitely generated projective $A$-modules.  Locally
finiteness of $\T$ implies that $A$ is of finite representation type.

(2) $\Rightarrow$ (1): The abelian category $\mod A$ is hereditary,
that is, $\Ext_A^p(-,-)$ vanishes for $p>1$. It follows that each
indecomposable complex is concentrated in a single degree.  Finite
representation type of $A$ implies that $\T$ is locally finite, by
Proposition~\ref{pr:fin}. The Auslander--Reiten triangles in
$\bfD^b(\mod A)$ are obtained from almost split seqences in $\mod
A$. Thus the Auslander--Reiten quiver is of the form $\bbZ\De$ for
some tree $\De$; see \cite[\S4]{Ha1987} for details. In particular,
the quiver has no oriented cycles.

For the shape of $\De$, see \cite[\S4]{DRS}, where hereditary rings of
finite representation type are discussed.
\end{proof}

\section{Thick subcategories and non-crossing partitions}

Let $A$ be a \emph{hereditary Artin algebra}. Thus $A$ is an artinian
ring that is finitely generated over its centre and $\Ext^p_A(-,-)=0$
for all $p>1$.  Note that the centre of a hereditary Artin algebra is
semisimple. The algebra $A$ is said to be \emph{connected} if the
centre of $A$ is a field. From now on assume that $A$ is connected and
we denote the centre by $k$.

In this section, we associate to $A$ a poset of non-crossing
partitions and establish a correspondence between this poset and the
lattice of thick subcatgories of $\bfD^b(\mod A)$. This correspondence
was first observed for path algebras of finite and affine type by
Ingalls and Thomas \cite{IT}. For an introduction to non-crossing
partitions, see \cite{Ar2009}.

\subsection*{The Weyl group}
Fix a representative set $S_1,\ldots,S_n$ of simple $A$-modules.  We
associate to $A$ a generalised Cartan matrix $C(A)=(C_{ij})_{1\le
  i,j\le n}$ as follows.  Given two simple modules $S_i,S_j$, we have
$\Ext_A^1(S_i,S_j)=0$ or $\Ext_A^1(S_j,S_i)=0$. Assume $i\neq j$ and
$\Ext_A^1(S_j,S_i)=0$.  Then define
\[C_{ij}=-\ell_{\End_A(S_i)}(\Ext_A^1(S_i,S_j))\quad\text{and}\quad
C_{ji}=-\ell_{\End_A(S_j)}(\Ext_A^1(S_i,S_j)).\] In addition, define
$C_{ii}=2$ and $d_i=\ell_k(\End_A(S_i))$ for each $i$. Then one has
$d_i C_{ij}=d_jC_{ji}$. Thus $C(A)$ is a symmetrisable generalised
Cartan matrix in the sense of \cite{Ka1985}.

Next we consider the Weyl group corresponding to $C(A)$. Let $\bbR^n$
denote the $n$-dimensional real space with standard basis
$\e_1,\ldots,\e_n$. Define a symmetric bilinear form
by $(\e_i,\e_j)=d_iC_{ij}$ and for each $\a\in\bbR^n$ with
$(\a,\a)\neq 0$ the reflection 
\[s_\a\colon\bbR^n\lto\bbR^n,\quad \xi\mapsto\xi-2\frac{(\xi,\a)}{(\a,\a)}\a.\]
We write $s_i$ for the \emph{simple reflection} $s_{\e_i}$ and observe
that each $s_i$ maps $\bbZ^n$ into itself.  The \emph{Weyl group} is
the group $W$ generated by the simple reflections $s_1,\ldots, s_n$.
The \emph{real roots} are by definition the elements of $\bbZ^n$ of
the form $w(\e_i)$ for some $w\in W$ and some $i\in\{1,\ldots,n\}$.
Note that for any real root $\a$ the corresponding reflection $s_\a$
belongs to $W$ since $s_{w(\a)}=ws_\a w^{-1}$.

We define the \emph{absolute order} on $W$ with respect to the
\emph{absolute length} $\ell$ as follows. Consider the set of
reflections
\[W_1=\{ws_iw^{-1}\mid w\in W,\,1\le i\le n\}\] and 
for each $w\in W$ let $\ell(w)$ denote the minimal $r\ge 0$ such that
$w$ can be written as product $w=x_1\ldots x_r$ of reflections $x_j\in
W_1$. Given $u,v\in W$ define
\[u\le v \quad\iff\quad \ell (u)+\ell(u^{-1}v)=\ell(v).\]
Note that the length function $\ell$ and the absolute order are
invariant under conjugation with a fixed element of $W$.

A \emph{Coxeter element} in $W$ is any element of $W$ that is
conjugate to one of the form $s_{\s (1)}s_{\s (2)}\ldots s_{\s (n)}$
for some permutation $\s$. Note that $\ell(c)=n$ for each Coxeter
element $c$ by \cite[Theorem~1.1]{Dy2001}. This has the following
immediate consequence.

\begin{lem}\label{le:cox}
  \pushQED{\qed} Let $c$ be a Coxeter element and $x_1,\ldots,x_n$
  a sequence of reflections in $W_1$ such that $c=x_1\cdots
  x_n$. If $1\le r\le s\le n$, then \[\ell(x_1\ldots
  x_r)=r\quad\text{and}\quad x_1\ldots x_r\le x_1\ldots x_s.
  \qedhere\]
\end{lem}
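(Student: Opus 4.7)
The plan is to derive both statements from two elementary ingredients: the subadditivity of the absolute length, $\ell(uv)\le\ell(u)+\ell(v)$ for all $u,v\in W$ (which is immediate, since the concatenation of minimal reflection factorizations of $u$ and $v$ yields a reflection factorization of $uv$), together with the hypothesis $\ell(c)=n$ that is quoted from \cite[Theorem~1.1]{Dy2001}.

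For the length identity, I would split the given factorization as
\[ c=(x_1\cdots x_r)(x_{r+1}\cdots x_n), \]
and use that $\ell(x_1\cdots x_r)\le r$ and $\ell(x_{r+1}\cdots x_n)\le n-r$ (since each $x_i$ is already a reflection in $W_1$) together with subadditivity to obtain
\[ n=\ell(c)\le\ell(x_1\cdots x_r)+\ell(x_{r+1}\cdots x_n)\le r+(n-r)=n. \]
Equality must hold throughout, which yields $\ell(x_1\cdots x_r)=r$. Applying the same reasoning to the decomposition $x_1\cdots x_s=(x_1\cdots x_r)(x_{r+1}\cdots x_s)$, and combining with the identities $\ell(x_1\cdots x_s)=s$ and $\ell(x_1\cdots x_r)=r$ just established, forces $\ell(x_{r+1}\cdots x_s)=s-r$.

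For the order comparison, I would use that each $x_i$ is an involution, so $(x_1\cdots x_r)^{-1}=x_r\cdots x_1$, and successive cancellation gives $(x_1\cdots x_r)^{-1}(x_1\cdots x_s)=x_{r+1}\cdots x_s$. Substituting the lengths computed above into the definition of the absolute order yields
\[ \ell(x_1\cdots x_r)+\ell\bigl((x_1\cdots x_r)^{-1}(x_1\cdots x_s)\bigr)=r+(s-r)=s=\ell(x_1\cdots x_s), \]
which is exactly the relation $x_1\cdots x_r\le x_1\cdots x_s$.

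There is no serious obstacle: the whole argument is bookkeeping once the two ingredients above are in hand. The only non-trivial input is the quoted equality $\ell(c)=n$ for a Coxeter element, which is precisely why it is isolated as a separate fact in the paragraph preceding the lemma.
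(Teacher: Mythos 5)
Your proof is correct and is exactly the bookkeeping the paper has in mind: the lemma is stated there without proof as an ``immediate consequence'' of Dyer's result $\ell(c)=n$, and your combination of subadditivity of the absolute length with the sandwich $n=\ell(c)\le r+(n-r)=n$ is the intended way to make that immediacy explicit. Both the length computation and the verification of $x_1\cdots x_r\le x_1\cdots x_s$ via the definition of the absolute order are carried out correctly.
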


Relative to a Coxeter element $c$ one defines the poset of
\emph{non-crossing partitions}
\[\NC(W,c)=\{w\in W\mid \id \le w\le c\}.\]
Given two Coxeter elements $c,c'$ in $W$, we have
$\NC(W,c)\cong\NC(W,c')$ provided that $c$ and $c'$ are conjugate.

The Grothendieck group $K_0(\mod A)$ is isomorphic to $\bbZ^n$ via the
map sending each simple $A$-module $S_i$ to the standard base vector
$\e_i$. The image of an $A$-module $X$ under this map is called
\emph{dimension vector} and denoted by $\dim X$; we write $s_X=s_{\dim
  X}$ for the corresponding reflection.

\subsection*{Exceptional modules and sequences}

An $A$-module $X$ is called \emph{exceptional} if $X$ is
indecomposable and $\Ext_A^1(X,X)=0$. Note that $\dim
X$ is a real root if $X$ is exceptional \cite[Corollary~2]{Ri1994}.  A
sequence $(X_1,\ldots,X_r)$ of $A$-modules is called
\emph{exceptional} if each $X_i$ is exceptional and
\[\Hom_A(X_j,X_i)=0=\Ext_A^1(X_j,X_i)\quad\text{for all}\quad i<j.\]
Such a sequence is \emph{complete} if $r=n$.

\begin{lem}\label{le:inj}
Sending an $A$-module $X$ to the reflection $s_X =s_{\dim X}$ gives
an injective map from the set of isomorphism classes of exceptional
$A$-modules into $W$.
\end{lem}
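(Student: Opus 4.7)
The plan is to separate the argument into two independent steps: first show that the dimension vectors agree, then invoke the classical fact that exceptional modules are determined by their dimension vectors.

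\textbf{Step 1.} For a nonzero vector $\alpha \in \bbR^n$ with $(\alpha,\alpha)\neq 0$, the reflection $s_\alpha$ fixes pointwise the hyperplane $\alpha^\perp$ and negates $\alpha$. Hence $s_\alpha = s_\beta$ for two real roots $\alpha,\beta$ forces $\alpha$ and $\beta$ to be proportional, so $\alpha = \pm\beta$ since both are primitive integer vectors of the same $W$-orbit type. Now suppose $X$ and $Y$ are exceptional modules with $s_X = s_Y$; then $\dim X = \pm \dim Y$. Since $X$ and $Y$ are nonzero, their dimension vectors have nonnegative coordinates and at least one strictly positive entry, so $\dim X = \dim Y$.

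\textbf{Step 2.} It then suffices to show that an exceptional module over a connected hereditary Artin algebra is determined up to isomorphism by its dimension vector. This is a known result which can be cited from Schofield or Crawley-Boevey; in the algebraically closed case it is part of Kac's theorem, and the extension to symmetrisable species follows by the standard reflection-functor argument (or, alternatively, from Ringel's analysis of exceptional sequences in \cite{Ri1994}).

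\textbf{Main obstacle.} Step 1 is essentially formal, so the substantive content lies in Step 2: the rigidity of exceptional modules as representations determined by a numerical invariant. This is not something one proves from scratch in two lines; the plan is to invoke it as an established fact, noting that it applies verbatim in the species setting relevant here, since $A$ is connected hereditary Artin and the generalised Cartan matrix $C(A)$ is symmetrisable.
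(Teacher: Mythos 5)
Your proposal is correct and follows essentially the same two-step route as the paper's proof: first recover $\dim X$ from $s_X$ (the paper phrases this as the uniqueness of the non-negative integer vector $\alpha$ with $s(\alpha)=-\alpha$, which is your proportionality-plus-positivity argument), and then invoke the standard fact that an exceptional module over a hereditary Artin algebra is determined up to isomorphism by its dimension vector, for which the paper cites \cite[Lemma~8.2]{Ke1996}.
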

\begin{proof}
An exceptional $A$-module is uniquely determined by its dimension
vector; see \cite[Lemma~8.2]{Ke1996}. On the other hand, given a
reflection $s\in W_1$, we have $s=s_\a$ where $\a\in\bbZ^n$ is the
unique vector with non-negative entries satisfying $s(\a)=-\a$.
Thus $s_{\dim X}=s_{\dim Y}$ implies $X\cong Y$.
\end{proof}

\begin{thm}[Crawley-Boevey, Ringel, Igusa--Schiffler]\label{th:braid}
Let $A$ be a connected hereditary Artin algebra with simple modules
$S_1,\ldots,S_n$ satisfying $\Ext^1_A(S_j,S_i)=0$ for all $i<j$. Denote by
$W$ the associated Weyl group and fix the Coxeter element
$c=s_{1}\cdots s_{n}$. Then the braid group $B_n$ on $n$
strings acts transitively on
\begin{enumerate}
\item[---] the isomorphism classes of complete exceptional
  sequences $(X_1,\ldots,X_n)$ in $\mod A$, and
\item[---] the sequences $(x_1,\ldots,x_n)$ of reflections in $W_1$
  such that $c=x_1\cdots x_n$.
\end{enumerate}
Moreover, $\s (X_1,\ldots,X_n)=(Y_1,\ldots,Y_n)$ implies
 $\s (s_{X_1},\ldots,s_{X_n})=(s_{Y_1},\ldots,s_{Y_n})$ for all $\s\in B_n$.
\end{thm}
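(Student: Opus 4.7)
The plan is to split the statement into three pieces: (i) define the braid group action on complete exceptional sequences (by mutation) and cite its transitivity; (ii) define the Hurwitz action on reflection factorisations of $c$ and cite its transitivity; (iii) check that the assignment $(X_1,\ldots,X_n)\mapsto(s_{X_1},\ldots,s_{X_n})$ intertwines the two actions. Combined with a single anchoring base point, this will deliver all three assertions of the theorem simultaneously.

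Concretely, the standard generator $\s_i\in B_n$ acts on a complete exceptional sequence $(X_1,\ldots,X_n)$ by replacing the pair $(X_i,X_{i+1})$ with $(LX_{i+1},\,X_i)$, where $LX_{i+1}$ is the \emph{left mutation} of $X_{i+1}$ across $X_i$, defined through a universal exact sequence in copies of $X_i$ built from $\Hom_A(X_i,X_{i+1})$ and $\Ext^1_A(X_i,X_{i+1})$; the result is again a complete exceptional sequence, and transitivity of this action is the theorem of Crawley-Boevey and Ringel, which I would cite. On the reflection side, $\s_i$ acts by the Hurwitz move
\[
(x_1,\ldots,x_i,x_{i+1},\ldots,x_n)\;\longmapsto\;(x_1,\ldots,x_ix_{i+1}x_i^{-1},\,x_i,\ldots,x_n),
\]
which preserves both the product $x_1\cdots x_n=c$ and (by Lemma~\ref{le:cox}) the property that the factorisation is reduced; transitivity of this action on reduced reflection factorisations of a Coxeter element is the theorem of Igusa--Schiffler.

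For the equivariance in (iii), it suffices to check on each generator $\s_i$, where the statement reduces to $s_{LX_{i+1}}=s_{X_i}s_{X_{i+1}}s_{X_i}^{-1}$. Since a reflection is determined by its root up to sign, this in turn reduces to the dimension-vector identity $\dimv LX_{i+1}=\pm\,s_{X_i}(\dimv X_{i+1})$ in $K_0(\mod A)\cong\bbZ^n$, which follows from the defining universal sequence together with the formula $(\e_i,\e_j)=d_iC_{ij}$ for the symmetric bilinear form set up at the start of the section. To anchor the two actions by a common base point, observe that the hypothesis $\Ext^1_A(S_j,S_i)=0$ for $i<j$, combined with $\Hom_A(S_j,S_i)=0$ for $i\neq j$, makes $(S_1,\ldots,S_n)$ a complete exceptional sequence whose image under $X\mapsto s_X$ is the factorisation $(s_1,\ldots,s_n)$ of $c$. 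Transitivity on both sides, equivariance, and the injectivity of Lemma~\ref{le:inj} then combine to give both transitivity assertions and the final compatibility statement in one step.

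The main obstacle is the dimension-vector computation underlying equivariance. Over an arbitrary connected hereditary Artin algebra (not just a simply laced path algebra over an algebraically closed field) one must work with the symmetrisable form $(\e_i,\e_j)=d_iC_{ij}$ throughout and verify that left mutation really implements the reflection $s_{X_i}$ on $K_0(\mod A)$; this rests on a careful multiplicity count for copies of $X_i$ appearing in the universal extension, expressed via the Euler form and the Cartan data $(C,d)$, and is the computationally heaviest step of the argument.
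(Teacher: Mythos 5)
Your proposal follows essentially the same route as the paper: cite Crawley-Boevey/Ringel for transitivity on complete exceptional sequences, cite Igusa--Schiffler for transitivity of the Hurwitz action on reflection factorisations of $c$, and obtain the compatibility from the computation of dimension vectors under mutation, which is exactly what the paper points to in \cite{IS2010} and \cite{CB1992}. Your sketch is correct and simply makes explicit the content of the citations the paper relies on.
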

\begin{proof}
For the action of the braid group on complete exceptional sequences,
see \cite{CB1992, Ri1994}. For the action on factorisations of the
Coxeter element, see \cite[Theorem~1.4]{IS2010}. The compatibility of
both actions follows from the computation of the dimension vectors of
the modules in an exceptional sequence under the braid group action;
see the proof of \cite[Corollary~2.4]{IS2010} and also the Corollary
in \cite{CB1992}.
\end{proof}

\begin{cor}\label{co:braid}
  Let $(x_1,\ldots,x_n)$ be a sequence of reflections in $W_1$ such
  that $c=x_1\cdots x_n$. Then there exists up to
  isomorphism a unique complete exceptional sequence
  $(X_1,\ldots,X_n)$ such that $x_i=s_{X_i}$ for all $i$.
\end{cor}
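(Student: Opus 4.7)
The plan is to reduce the statement to Theorem~\ref{th:braid} by exhibiting one matched pair (complete exceptional sequence, factorisation of $c$) to serve as a base point, and then transporting this pair through the braid group action.

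First I would verify that $(S_1,\ldots,S_n)$ is itself a complete exceptional sequence whose associated factorisation is exactly $c = s_1\cdots s_n$. The condition $\Ext^1_A(S_j,S_i)=0$ for $i<j$ is given by hypothesis, and $\Hom_A(S_j,S_i)=0$ whenever $i\neq j$ because the $S_i$ are pairwise non-isomorphic simple modules; so $(S_1,\ldots,S_n)$ is indeed exceptional. Since $\dim S_i = \e_i$, we have $s_{S_i}=s_i$ for each $i$, so the corresponding sequence of reflections is $(s_1,\ldots,s_n)$, whose product is the chosen Coxeter element $c$.

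For existence, let $(x_1,\ldots,x_n)$ be a factorisation of $c$ into reflections from $W_1$. By the transitivity clause of Theorem~\ref{th:braid} applied to the set of factorisations of $c$, there exists $\sigma\in B_n$ with $\sigma(s_1,\ldots,s_n)=(x_1,\ldots,x_n)$. Define $(X_1,\ldots,X_n) := \sigma(S_1,\ldots,S_n)$, which is again a complete exceptional sequence by transitivity of the action on such sequences. The compatibility clause of Theorem~\ref{th:braid} — that $\sigma$ acts the same way on modules and on their associated reflections — then gives $s_{X_i}=x_i$ for every $i$.

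For uniqueness, suppose $(X_1,\ldots,X_n)$ and $(Y_1,\ldots,Y_n)$ are two complete exceptional sequences with $s_{X_i}=s_{Y_i}=x_i$ for all $i$. Lemma~\ref{le:inj} states that $X\mapsto s_X$ is injective on isomorphism classes of exceptional modules, so $X_i\cong Y_i$ componentwise, which is the desired uniqueness up to isomorphism. I do not foresee any real obstacle in carrying this out: Theorem~\ref{th:braid} and Lemma~\ref{le:inj} do essentially all the work, and the only genuine verification is that $(S_1,\ldots,S_n)$ is a complete exceptional sequence matched with the factorisation $c=s_1\cdots s_n$, which is immediate from the orientation hypothesis on the simples.
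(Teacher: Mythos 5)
Your proposal is correct and follows exactly the paper's own argument: transport the base pair $\bigl((S_1,\ldots,S_n),(s_1,\ldots,s_n)\bigr)$ along the braid group element provided by the transitivity in Theorem~\ref{th:braid}, use the compatibility clause to get $s_{X_i}=x_i$, and deduce uniqueness from Lemma~\ref{le:inj}. The only difference is that you spell out the (routine) verification that the simples form a complete exceptional sequence matching the factorisation $c=s_1\cdots s_n$, which the paper leaves implicit.
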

\begin{proof}
Theorem~\ref{th:braid} gives $\s\in B_n$ such that
$(x_1,\ldots,x_n)=\s(s_1,\ldots,s_n)$. Let $(X_1,\ldots,
X_n)=\s(S_1,\ldots,S_n)$. Then $x_i=s_{X_i}$ for all $i$. Uniqueness
follows from Lemma~\ref{le:inj}.
\end{proof}

\subsection*{Thick subcategories}
We consider the bounded derived category $\bfD^b(\mod A)$ and identify
$A$-modules with complexes concentrated in degree zero. Recall the
following correspondence.

\begin{prop}[Br\"uning]\label{pr:exact}
Let $\A$ be a hereditary abelian category. The canonical inclusion
$\A\to\bfD^b(\A)$ induces a bijection between
\begin{enumerate}
\item[--] the set of exact abelian and extension closed subcategories of $\A$, and
\item[--] the set of thick subcategories of $\bfD^b(\A)$.
\end{enumerate}
The bijection sends $\C\subseteq\A$ to $\{X\in\bfD^b(\A)\mid
H^iX\in\C\text{ for all }i\in\bbZ\}$.
Its inverse sends $\U\subseteq\bfD^b(\A)$ to 
\[H^0\U=\{Y\in\A\mid Y\cong H^0X\text{ for some }X\in\U\}.\]
\end{prop}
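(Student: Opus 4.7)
The plan is to exploit the fundamental splitting property available in the derived category of a hereditary abelian category: every object $X \in \bfD^b(\A)$ decomposes as a finite direct sum $X \cong \bigoplus_{i \in \bbZ} H^i(X)[-i]$, because the truncation triangles all split once $\Ext^p_\A = 0$ for $p \ge 2$. Nearly every verification below reduces to this splitting, together with the long exact cohomology sequence.

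For the forward direction, given an exact abelian and extension-closed $\C \subseteq \A$, I would show that $\bfD^b_\C(\A) := \{X \in \bfD^b(\A) \mid H^i X \in \C \text{ for all } i\}$ is a thick subcategory. Closure under shifts and direct summands is clear, since $\C$ is closed under direct summands inside $\A$. For closure under cones, given a triangle $X \to Y \to Z \to X[1]$ with $X,Y \in \bfD^b_\C(\A)$, the long exact cohomology sequence yields a short exact sequence in $\A$
\[
0 \lto \Coker(H^i X \to H^i Y) \lto H^i Z \lto \Ker(H^{i+1}X \to H^{i+1}Y) \lto 0.
\]
Since $\C$ is exact abelian, the outer terms lie in $\C$, and then extension closedness forces $H^i Z \in \C$.

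For the backward direction, given a thick $\U \subseteq \bfD^b(\A)$, set $\C := H^0 \U$. The splitting implies that if $X \in \U$, each summand $H^i(X)[-i]$ already belongs to $\U$, so after identifying $\A$ with complexes concentrated in degree zero, $\C$ coincides with $\U \cap \A$. For a morphism $f \colon Y_1 \to Y_2$ between objects of $\C$, its cone in $\bfD^b(\A)$ lies in $\U$ and splits as $\Ker(f)[1] \oplus \Coker(f)$; consequently both $\Ker(f)$ and $\Coker(f)$ belong to $\U \cap \A = \C$, giving exact abelianness. Extension closedness in $\A$ follows because any short exact sequence $0 \to Y_1 \to Y_2 \to Y_3 \to 0$ in $\A$ produces an exact triangle in $\bfD^b(\A)$, and $\U$ is closed under cones.

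Finally, the two constructions are mutually inverse. The equality $H^0 \bfD^b_\C(\A) = \C$ is immediate, by testing with complexes concentrated in degree zero. Conversely, given a thick $\U$ with $\C = H^0 \U$: any $X$ with $H^i X \in \C$ for all $i$ has each $H^i X \in \U$, hence each $H^i X[-i] \in \U$, and the splitting yields $X \cong \bigoplus_i H^i X[-i] \in \U$ (finite sum); any $X \in \U$ has each $H^i X[-i]$, and thus each $H^i X$, as a summand of a shift of $X$, hence $H^i X \in \C$. The main obstacle is really just to use the splitting cleanly to identify $\A$-objects with degree-zero complexes inside $\U$; once that identification is in place, each verification is either a short diagram-chase using the long exact cohomology sequence or a direct translation between short exact sequences in $\A$ and exact triangles in $\bfD^b(\A)$.
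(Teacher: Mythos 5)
Your argument is correct. Note, however, that the paper itself gives no proof at this point: it simply cites Br\"uning [Theorem~5.1], so there is no internal argument to compare against. What you have written is essentially the standard proof (and in substance the one in Br\"uning's paper): everything is driven by the splitting $X\cong\bigoplus_i H^iX[-i]$, which identifies $H^0\U$ with $\U\cap\A$ and reduces all verifications to the long exact cohomology sequence and the translation between short exact sequences in $\A$ and triangles in $\bfD^b(\A)$. Your handling of the forward direction (the outer terms of the extension presenting $H^iZ$ lie in $\C$ because $\C$ is an exact abelian subcategory, then extension closedness finishes) and of the backward direction (the cone of $f\colon Y_1\to Y_2$ is the two-term complex $[Y_1\to Y_2]$, which splits as $\Ker(f)[1]\oplus\Coker(f)$, so both pieces land in $\U\cap\A$) is exactly right. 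The only step you assert without justification is the splitting itself; for completeness you should record that the truncation triangle $\tau_{\le n}X\to X\to\tau_{>n}X\to\Si(\tau_{\le n}X)$ splits because the connecting morphism lies in a group computed from $\Ext_\A^{\ge 2}$-terms, which vanish by heredity. With that one line added, your proof is a complete and self-contained substitute for the citation.
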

\begin{proof}
See \cite[Theorem~5.1]{Br}.
\end{proof}

The \emph{Loewy length} of an object $X$ in some abelian category is
the smallest $p\ge 0$ such that there exists a chain $0=X_0\subseteq
X_1\subseteq\ldots \subseteq X_p=X$ so that $X_{i+1}/X_i$ is semisimple
for all $i$. The \emph{height} of an abelian category is the supremum
of the Loewy lengths of its objects.

Next we characterise the thick subcategories of $\bfD^b(\mod A)$ such
that the inclusion admits an adjoint. The connection with exceptional
sequences is due to Bondal \cite{Bo1990}.

\begin{prop}\label{pr:hered}
Let $A$ be a hereditary Artin algebra. For a thick subcategory $\U$ of
$\bfD^b(\mod A)$ are equivalent.
\begin{enumerate}
\item The inclusion $\U\to \bfD^b(\mod A)$ admits a left adjoint.
\item The inclusion $H^0\U\to \mod A$ admits a left adjoint.
\item The abelian category $H^0\U$ is of finite height and has only
  finitely many isomorphism classes of simple objects.
\item There exists an exceptional sequence $(X_1,\ldots,X_r)$ in $\mod
  A$  such that $\U=\Thick(X_{1},\ldots,X_r)$.
\item There exists a complete exceptional sequence $(X_1,\ldots,X_n)$
  in $\mod A$ such that
\[\U=\Thick(X_{1},\ldots,X_r)\quad\text{and}\quad^\perp\U=\Thick(X_{r+1},\ldots,X_n)\]
\end{enumerate}
\end{prop}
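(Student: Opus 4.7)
My plan is to close the loop (1) $\Leftrightarrow$ (2) $\Rightarrow$ (3) $\Rightarrow$ (4) $\Rightarrow$ (5) $\Rightarrow$ (1), using Br\"uning's bijection (Proposition~\ref{pr:exact}) as the bridge between the derived and the abelian level and heredity to split every bounded complex as $\bigoplus_i H^i(X)[-i]$. For (1) $\Leftrightarrow$ (2), given a left adjoint $L\colon\bfD^b(\mod A)\to\U$ to the inclusion, the unit triangle $F\to X\to L(X)\to\Si F$ of $X\in\mod A$ together with heredity forces $L(X)$ to have cohomology concentrated in degrees $0$ and $-1$; the identification
\[\Hom_{\bfD^b(\mod A)}(L(X),Y)=\Hom_{\mod A}(H^0L(X),Y)\qquad\text{for }Y\in\C:=H^0\U\]
then exhibits $X\mapsto H^0 L(X)$ as a left adjoint to $\C\hookrightarrow\mod A$. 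A left adjoint $L_0$ at the abelian level extends degreewise via $L(\bigoplus X_i[-i])=\bigoplus L_0(X_i)[-i]$.

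For (2) $\Rightarrow$ (3), the object $L_0(A)\in\C$ is a generator, since $\Hom_\C(L_0(A),S)=\Hom_{\mod A}(A,S)\neq 0$ for every nonzero $S\in\C$, so the finitely many composition factors of $L_0(A)$ in $\C$ are exactly the simples of $\C$, and the $\mod A$-Loewy length of $L_0(A)$ bounds the $\C$-Loewy length of every quotient, giving finite height. For (4) $\Rightarrow$ (5), every exceptional sequence in $\mod A$ extends to a complete one by Crawley-Boevey \cite{CB1992} and Ringel \cite{Ri1994}. For (5) $\Rightarrow$ (1), Bondal's theorem \cite{Bo1990} supplies the semi-orthogonal decomposition
\[\bfD^b(\mod A)=\langle \Thick(X_{r+1},\ldots,X_n),\Thick(X_1,\ldots,X_r)\rangle,\]
so $\U$ is admissible and admits both adjoints, recovering (1) and the identification $^\perp\U=\Thick(X_{r+1},\ldots,X_n)$.

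The hard step is (3) $\Rightarrow$ (4), which I would prove by induction on the number $m$ of simples of $\C$. The case $m=0$ is trivial. For $m\ge 1$, heredity of $\C$ (inherited from $\mod A$: since $\C$ is exact abelian extension closed, Yoneda $\Ext^2_\C$ embeds into $\Ext^2_A=0$) together with finite height and finitely many simples makes $\C$ an essentially small hereditary length category, which admits an indecomposable projective $P\in\C$. Extension-closedness of $\C$ promotes $\Ext^1_\C(P,P)=0$ to $\Ext^1_A(P,P)=0$, so $P$ is exceptional in $\mod A$. The right perpendicular $\C':=P^\perp\cap\C$ consists of those $Y\in\C$ that do not have the simple top of $P$ as a composition factor; it is again exact abelian extension closed in $\mod A$, of finite height, with exactly $m-1$ simples. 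By induction $\C'=H^0\Thick(X_2,\ldots,X_r)$ for an exceptional sequence $(X_2,\ldots,X_r)$, and prepending $X_1=P$ gives the desired sequence, so that via Br\"uning $\U=\bfD^b(\C)=\Thick(P,X_2,\ldots,X_r)$. The main obstacle is precisely this Ringel-style perpendicular computation: confirming that passing from $\C$ to $P^\perp\cap\C$ drops the simple count by exactly one while preserving finite height, and aligning the intrinsic $\C$-level Ext-computations with those in the ambient $\mod A$.
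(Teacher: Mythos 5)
Your overall architecture is sound and your treatment of (1)~$\Rightarrow$~(2), (2)~$\Rightarrow$~(3) and (5)~$\Rightarrow$~(1) matches the paper in substance: the paper cites \cite[\S2]{KS2010} for (1)~$\Leftrightarrow$~(2), proves (2)~$\Rightarrow$~(3) by observing that a left adjoint sends the projective generator $A$ to a projective generator of $H^0\U$ (so $H^0\U\simeq\mod B$), and cites \cite[Theorem~3.2]{Bo1990} for the return to (1). Where you genuinely diverge is in (3)~$\Rightarrow$~(4) and in reaching (5). The paper gets (3)~$\Rightarrow$~(4) in one stroke by quoting Gabriel \cite[8.2]{Ga1973}: $H^0\U\simeq\mod B$ for a hereditary Artin algebra $B$, and the simple $B$-modules already form the required exceptional sequence; and it obtains (5) from (1)--(4) via the $k$-duality and Lemma~\ref{le:tria-loc} applied to $^\perp\U$, rather than via completion of exceptional sequences. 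Your perpendicular-category induction is a legitimate alternative in the spirit of Ringel and Schofield, and your route to (5) via Crawley-Boevey/Ringel completion is also workable, though you must then still prove $^\perp\U=\Thick(X_{r+1},\ldots,X_n)$ (containment one way is clear from heredity; the other way needs the semiorthogonal decomposition, as you indicate).

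Two concrete problems remain in your (3)~$\Rightarrow$~(4). First, the base of your induction rests on the claim that a hereditary length category of finite height with finitely many simple objects admits an indecomposable projective. This is false at that level of generality: the category of finite-dimensional representations of a quiver with two vertices and infinitely many arrows between them is hereditary, has height $2$ and two simples, but no nonzero projectives. What saves you here is that $H^0\U$ sits inside $\mod A$ with all morphism and extension groups of finite length over the centre $k$, which is exactly the finiteness needed to build projective covers layer by layer -- and that is precisely the content of the Gabriel result the paper cites, so you have not actually dispensed with that input and should either invoke it or supply the universal-extension construction using $k$-finiteness. Second, with the paper's convention that $(X_1,\ldots,X_r)$ is exceptional when $\Hom_A(X_j,X_i)=0=\Ext^1_A(X_j,X_i)$ for $i<j$, your category $\C'=P^\perp\cap\C$ consists of objects receiving no maps from $P$, so the projective $P$ must be \emph{appended}, giving $(X_2,\ldots,X_r,P)$, not prepended; prepending produces a sequence violating the defining condition. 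This is easily repaired but as written the induction outputs a non-exceptional sequence. You should also note explicitly that $\Thick(X_2,\ldots,X_r,P)$ exhausts $\U$: one needs $\rad P\in\C'$, which follows since for $P$ indecomposable projective in the hereditary category $\C$ every nonzero endomorphism of $P$ is invertible, so the top of $P$ does not occur in $\rad P$.
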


\begin{rem} The $k$-duality $(\mod A)^\op\xto{\sim}\mod(A^\op)$
  induces a duality
\[\bfD^b(\mod
A)^\op\stackrel{\sim}\lto\bfD^b(\mod A^\op)\] which preserves the
property (3). Thus the existence of left adjoints in (1) and (2) is
equivalent to the existence of right adjoints. 
\end{rem}

\begin{rem}
  The equivalent conditions in Proposition~\ref{pr:hered} are
  automatically satisfied if the algebra $A$ is of finite
  representation type; see Theorem~\ref{th:adj}.
\end{rem}

\begin{proof}[Proof of Proposition~\ref{pr:hered}]
(1) $\Leftrightarrow$ (2): See \cite[\S2]{KS2010}.

(2) $\Rightarrow$ (3): A left adjoint $F\colon\mod A\to H^0\U$ sends a
  projective generator to a projective generator. Thus one takes
  $B=\End_A(FA)$ and gets an equivalence 
\[\Hom_A(FA,-)\colon H^0\U\stackrel{\sim}\lto\mod B.\]
Clearly, $\mod B$ has finite height and only finitely many simple objects.

(3) $\Rightarrow$ (4): It follows from \cite[8.2]{Ga1973} that the
category $H^0\U$ is equivalent to $\mod B$ for some finite dimensional
$k$-algebra $B$. The algebra $B$ is hereditary since $A$ is
hereditary. Thus the simple $B$-modules form a complete exceptional
sequence $(X_1,\ldots,X_r)$ in $\mod B$. This gives an exceptional
sequence in $\mod A$ satisfying $\U=\Thick(X_{1},\ldots,X_r)$.

(4) $\Rightarrow$ (1): See  \cite[Theorem~3.2]{Bo1990}.

(1)--(4) $\Rightarrow$ (5): The duality provides a right adjoint for
the inclusion of $\U$.  Thus the inclusion of $^\perp\U$ admits a left
adjoint by Lemma~\ref{le:tria-loc}.  It follows that there is an
exceptional sequence $(X_{r+1},\ldots,X_s)$ in $\mod A$ with
$^\perp\U=\Thick(X_{r+1},\ldots,X_s)$.  Then $(X_{1},\ldots,X_s)$ is
an exceptional sequence satisfying
$\Thick(X_{1},\ldots,X_s)=\bfD^b(\mod A)$ which is therefore complete.

(5) $\Rightarrow$ (4): Clear.
\end{proof}

\begin{exm}
For a tame hereditary algebra $A$, the regular $A$-modules form an exact abelian
and extension closed subcategory of $\mod A$ that is of infinite height.
\end{exm}

\subsection*{A classification}
The following result is a consequence of Theorem~\ref{th:braid} and
provides a combinatorial classification of the thick subcategories of
$\bfD^b(\mod A)$ satisfying the equivalent conditions in
Proposition~\ref{pr:hered}. Using the correspondence in
Proposition~\ref{pr:exact}, this translates into a classification of
certain abelian subcategories of $\mod A$.

For the path algebra of a quiver over an algebraically closed field,
this result is due to Igusa, Schiffler, and Thomas \cite{IS2010},
Previous work of Ingalls and Thomas \cite{IT} establishes the result
in finite and affine type.

\begin{thm}\label{th:classif}
Let $A$ be a connected hereditary Artin algebra with simple modules
$S_1,\ldots,S_n$ satisfying $\Ext^1_A(S_j,S_i)=0$ for all
$i<j$. Denote by $W$ the associated Weyl group and fix the Coxeter
element $c=s_{1}\cdots s_{n}$. Then there exists an order
preserving bijection between
\begin{enumerate}
\item[---] the set of thick subcategories of $\bfD^b(\mod A)$ such
  that the inclusion admits a left adjoint, and
\item[---] the set of non-crossing partitions $\NC(W,c)$.
\end{enumerate}
The map sends a thick subcategory which is
generated by an exceptional sequence $(X_1,\ldots,X_r)$ to
$s_{X_1}\cdots s_{X_r}$.
\end{thm}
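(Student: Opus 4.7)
The plan is to define a map $\Phi$ on the set of thick subcategories of $\bfD^b(\mod A)$ satisfying the equivalent conditions of Proposition~\ref{pr:hered} by setting $\Phi(\U)=s_{X_1}\cdots s_{X_r}$ for any exceptional sequence $(X_1,\ldots,X_r)$ with $\U=\Thick(X_1,\ldots,X_r)$, and to exhibit an inverse $\Psi$ via Corollary~\ref{co:braid}. To see that $\Phi(\U)$ lies in $\NC(W,c)$, I first extend the generating exceptional sequence, via Proposition~\ref{pr:hered}(5), to a complete exceptional sequence $(X_1,\ldots,X_n)$. Since the standard sequence $(S_1,\ldots,S_n)$ has reflection product $c$ and the braid moves $(x_i,x_{i+1})\mapsto(x_ix_{i+1}x_i^{-1},x_i)$ from Theorem~\ref{th:braid} preserve the product, one obtains $s_{X_1}\cdots s_{X_n}=c$. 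Lemma~\ref{le:cox} then yields $\ell(s_{X_1}\cdots s_{X_r})=r$ and $s_{X_1}\cdots s_{X_r}\le c$, so $\Phi(\U)\in\NC(W,c)$. Independence of $\Phi(\U)$ from the chosen generating sequence follows from Proposition~\ref{pr:hered}(3), which provides an equivalence $H^0\U\simeq\mod B$ for some hereditary Artin algebra $B$: complete exceptional sequences in $\mod B$ form a single braid orbit by Theorem~\ref{th:braid} applied to $B$, and the braid action preserves the reflection product.

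To build $\Psi$, take $w\in\NC(W,c)$ of absolute length $r$. Since $\ell(w)+\ell(w^{-1}c)=n$, I can factor $w=x_1\cdots x_r$ and $w^{-1}c=x_{r+1}\cdots x_n$ into reflections with length-additive prefixes, obtaining $c=x_1\cdots x_n$. Corollary~\ref{co:braid} gives a unique complete exceptional sequence $(X_1,\ldots,X_n)$ with $s_{X_i}=x_i$; set $\Psi(w)=\Thick(X_1,\ldots,X_r)$. The identity $\Phi\circ\Psi=\id$ is immediate from the construction, and $\Psi\circ\Phi=\id$ follows once $\Psi$ is shown to be well defined, because any exceptional sequence generating $\U$ whose reflection product is $\Phi(\U)$ extends, by Proposition~\ref{pr:hered}(5), to a complete exceptional sequence which by Theorem~\ref{th:braid} is related by a braid element to the one used in the construction of $\Psi(\Phi(\U))$. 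For order-preservation, the inclusion $\U\subseteq\V$ lets me extend an exceptional sequence generating $\U$ to one generating $\V$, and Lemma~\ref{le:cox} then gives $\Phi(\U)\le\Phi(\V)$; the converse is obtained by threading a length-additive factorisation of $\Phi(\V)$ refining one for $\Phi(\U)$ through the construction of $\Psi$.

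The main obstacle is showing that $\Psi$ is well defined, equivalently that $\Phi$ is injective. This amounts to a \emph{Hurwitz transitivity} statement: two length-$r$ reflection factorisations of an arbitrary $w\in\NC(W,c)$ must lift, via Corollary~\ref{co:braid}, to braid-equivalent complete exceptional sequences. I would obtain this by observing that every reflection appearing in a length-additive factorisation of $w$ lies below $w$, hence in the subgroup $W(B)\subseteq W$ generated by the reflections $\le w$, where $B$ is the hereditary Artin algebra corresponding to $\Psi(w)$ via Proposition~\ref{pr:hered}, and $w$ is the Coxeter element of $W(B)$. Theorem~\ref{th:braid} applied to $B$ then yields transitivity of the braid action on reflection factorisations of $w$ inside $W(B)$, which is precisely what is needed; the compatibility clause of Theorem~\ref{th:braid} transports this back to braid-equivalence of the complete exceptional sequences produced by Corollary~\ref{co:braid}, closing the argument.
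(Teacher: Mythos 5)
Your handling of well-definedness into $\NC(W,c)$, of surjectivity, and of order-preservation coincides with the paper's argument. The two points where you deviate are exactly the two delicate steps, and one of them contains a genuine gap.

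The minor issue first: to show $\Phi(\U)$ is independent of the chosen generating sequence you pass to $\mod B\simeq H^0\U$ and invoke Theorem~\ref{th:braid} for $B$. That theorem yields an identity of reflection products inside the Weyl group of $B$, whereas you need the identity inside $W=W(A)$; transporting it requires knowing that each elementary braid move on an exceptional pair in $H^0\U$ transforms the corresponding $W(A)$-reflections by the Hurwitz move, i.e.\ a version of the compatibility clause for non-complete sequences in $\mod A$. This is true but is not what Theorem~\ref{th:braid} literally provides. The paper sidesteps this entirely: extend $(X_1,\ldots,X_r)$ to a complete sequence $(X_1,\ldots,X_n)$, observe that $(Y_1,\ldots,Y_s,X_{r+1},\ldots,X_n)$ is again a complete exceptional sequence (the required orthogonality holds because the $X_j$ with $j>r$ lie in $^\perp\U$ and the $Y_i$ in $\U$), conclude that both complete sequences have reflection product $c$, and cancel the common tail in $W$.

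The serious gap is your argument for injectivity of $\Phi$, equivalently well-definedness of $\Psi$. You reduce it to Hurwitz transitivity on reduced reflection factorisations of an arbitrary $w\in\NC(W,c)$ and propose to deduce this from Theorem~\ref{th:braid} applied to ``the algebra $B$ corresponding to $\Psi(w)$''. This is circular: $\Psi(w)$ is precisely the object whose independence of the chosen factorisation is at stake, so $B$ is not yet defined. Even after fixing one factorisation to pin down a candidate $B$, you would still need (i) that every reflection occurring in any length-additive factorisation of $w$ equals $s_X$ for some exceptional module $X$ in $H^0\Psi(w)$, and (ii) that the Weyl group of $B$ embeds into $W$ compatibly with these reflections and with $w$ as its Coxeter element. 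These are substantial results of Igusa--Schiffler/Hubery--Krause type and do not follow from anything quoted in the paper. The paper's injectivity argument avoids all of this: from $\cox(\U)=\cox(\V)=w$ one first gets $r=\ell(w)=s$, hence the length-$n$ factorisation $c=s_{Y_1}\cdots s_{Y_s}s_{X_{r+1}}\cdots s_{X_n}$; Corollary~\ref{co:braid} combined with Lemma~\ref{le:inj} then forces $(Y_1,\ldots,Y_s,X_{r+1},\ldots,X_n)$ to be a complete exceptional sequence, whence $\V=\Thick(Y_1,\ldots,Y_s)=\Thick(X_{r+1},\ldots,X_n)^\perp=\U$. Only Hurwitz transitivity for the full Coxeter element $c$ — already packaged in Corollary~\ref{co:braid} — is used. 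You should replace your Hurwitz argument for general $w$ by this concatenation trick.
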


Let us formulate an immediate consequence.

\begin{cor}
 \pushQED{\qed} Let $(X_1,\ldots, X_r)$ and $(Y_1,\ldots, Y_s)$ be
 exceptional sequences in $\mod A$. Then
\[\Thick(X_1,\ldots,X_r)=\Thick(Y_1,\ldots, Y_s)\quad\Longleftrightarrow\quad
s_{X_1}\cdots s_{X_r}=s_{Y_1}\cdots s_{Y_s}.\qedhere\]
\end{cor}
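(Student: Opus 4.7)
The plan is to read off the corollary directly from Theorem~\ref{th:classif}. First I would note that both $\Thick(X_1,\ldots,X_r)$ and $\Thick(Y_1,\ldots,Y_s)$ belong to the domain of the bijection in that theorem: by Proposition~\ref{pr:hered}, any thick subcategory of $\bfD^b(\mod A)$ generated by an exceptional sequence has inclusion admitting a left adjoint (this is the implication $(4)\Rightarrow(1)$ of that proposition). Next, by the explicit formula in Theorem~\ref{th:classif}, the bijection sends $\Thick(X_1,\ldots,X_r)$ to the non-crossing partition $s_{X_1}\cdots s_{X_r}\in\NC(W,c)$, and likewise sends $\Thick(Y_1,\ldots,Y_s)$ to $s_{Y_1}\cdots s_{Y_s}$. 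Since the map is a bijection, equality of the two thick subcategories is equivalent to equality of their images.

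The only conceptual point worth flagging is the well-definedness asserted by Theorem~\ref{th:classif}: a priori the product $s_{X_1}\cdots s_{X_r}$ might depend on the particular exceptional sequence, not merely on the subcategory it generates. This is subsumed by Theorem~\ref{th:classif}, whose proof rests on the transitivity of the braid group action both on complete exceptional sequences and on factorisations of the Coxeter element (Theorem~\ref{th:braid}), combined with Proposition~\ref{pr:hered}(5) which allows any exceptional sequence to be completed to a full one $(X_1,\ldots,X_n)$ with $c=s_{X_1}\cdots s_{X_n}$. Once Theorem~\ref{th:classif} is invoked, however, the corollary follows without any further work, so there is no real obstacle at this level.
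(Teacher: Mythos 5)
Your proposal is correct and follows the paper exactly: the paper presents this corollary as an immediate consequence of Theorem~\ref{th:classif}, with the forward implication given by the well-definedness of the map $\cox$ and the converse by its injectivity, both established in the theorem's proof. Your remark that Proposition~\ref{pr:hered} places the two thick subcategories in the domain of the bijection is the right (and only) point to check.
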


\begin{proof}[Proof of Theorem~\ref{th:classif}]
Fix a thick subcategory $\U\subseteq\bfD^b(\mod A)$
such that the inclusion admits a left adjoint. There exists a complete
exceptional sequence $(X_1,\ldots,X_n)$ in $\mod A$ such that
$\U=\Thick(X_{1},\ldots,X_r)$ for some $r\le n$, by
Proposition~\ref{pr:hered}. We assign to $\U$ the element
$\cox(\U)=s_{X_{1}}\cdots s_{X_r}$ in $W$. Observe that $\cox(\U)\le
c$ by Lemma~\ref{le:cox}, since $s_{X_{1}}\cdots s_{X_n}=c$ by
Theorem~\ref{th:braid}. Thus $\cox$ gives a map into $\NC(W,c)$.

\emph{The map $\cox$ is well-defined:} Choose a second exceptional
sequence $(Y_1,\ldots,Y_s)$ in $\mod A$ such that
$\U=\Thick(Y_{1},\ldots,Y_s) $.  Then
$(Y_1,\ldots,Y_s,X_{r+1},\ldots,X_n)$ is a complete exceptional
sequence, and it follows from Theorem~\ref{th:braid} that
\[s_{Y_1}\cdots s_{Y_s}s_{X_{r+1}}\cdots s_{X_n}=c=s_{X_1}\cdots s_{X_r}s_{X_{r+1}}\cdots s_{X_n}.\]
Thus $s_{Y_{1}}\cdots s_{Y_s}=s_{X_{1}}\cdots s_{X_r}$.

\emph{The map $\cox$ is injective:} Let $\U$ and $\V$ be thick
subcategories such that $\cox (\U)=\cox(\V)$. Thus there are two
complete exceptional sequences $(X_1,\ldots,X_n)$ and
$(Y_1,\ldots,Y_n)$ such that $\U=\Thick(X_1,\ldots,X_r)$ and
$\V=\Thick(Y_1,\ldots,Y_s)$ for some pair of integers $r,s\le
n$. Moreover, $s_{X_1}\cdots s_{X_r}=s_{Y_1}\cdots s_{Y_s}$.  It
follows that \[r=\ell(s_{X_1}\cdots s_{X_r})=\ell(s_{Y_1}\cdots
s_{Y_s})=s\] and therefore $c=s_{Y_1}\cdots s_{Y_s}s_{X_{r+1}}\cdots
s_{X_n}$.  From Corollary~\ref{co:braid} and Lemma~\ref{le:inj} one
gets that $(Y_1,\ldots,Y_s,X_{r+1},\ldots, X_n)$ is a complete
exceptional sequence. Thus
\[\U=\Thick(X_1,\ldots,X_r)=\Thick(X_{r+1},\ldots,X_n)^\perp=\Thick(Y_1,\ldots,Y_s)=\V.\]

\emph{The map $\cox$ is surjective:} Let $w\in NC(W,c)$. Thus we can write
\[c=x_1\cdots x_rx_{r+1}\cdots x_n\] as products of reflections $x_i\in
W_1$ such that $w=x_1\cdots x_r$. From Corollary~\ref{co:braid}
one gets a complete exceptional sequence $(X_1,\ldots, X_n)$ with
$x_i=s_{X_i}$ for all $i$. Set $\U=\Thick(X_1,\ldots,X_r)$. Then
$\cox(\U)=w$.

\emph{The map $\cox$ is order preserving:} Let $\V\subseteq\U$ be
thick subcategories. From Proposition~\ref{pr:hered} one gets a
complete exceptional sequence $(X_1,\ldots,X_n)$ such that
\[\V=\Thick(X_{1},\ldots,X_s)\quad\text{and}\quad\U=\Thick(X_{1},\ldots,X_r)\] for
some $s\le r\le n$.  It follows from Lemma~\ref{le:cox} that
\[\cox(\V)=s_{X_{1}}\cdots s_{X_s}\le s_{X_{1}}\cdots
s_{X_r}=\cox(\U).\qedhere\]
\end{proof}

\begin{rem}
Let $\U\subseteq\bfD^b(\mod A)$ be a thick subcategory such that the inclusion admits a left adjoint.
Then $\cox (\U)\cox({^\perp\U})=c$.
\end{rem}

\subsection*{Example: The Kronecker algebra}

Let $k$ be a field and consider the \emph{Kronecker algebra}, that is,
the path algebra of the quiver
$\xymatrix@=15pt{\scriptscriptstyle{\circ}
  \ar@<.5ex>[r]\ar@<-.5ex>[r]&{\scriptscriptstyle{\circ}}}$. This is a
tame hereditary Artin algebra;  we denote it by $K$ and
compute the lattice of thick subcategories of $\bfD^b(\mod K)$. For
a description of $\mod K$, we refer to \cite[VIII.7]{ARS}.

Each finite dimensional indecomposable $K$-module is either
exceptional or regular. The dimension vectors of the exceptional
$K$-modules are $(p,q)\in\bbZ^2$ with $p,q\ge 0$ and $|p-q|=1$. Thus
the non-crossing partitions with respect to the Coxeter element
$c=s_1s_2$ form the lattice
\[\NC(W,c)=\{s_{(p,q)}\mid p,q\ge 0\text{ and }|p-q|=1 \}\cup \{\id,c\}\]
with the following Hasse diagram:
\[\xymatrix@=1em{
&&c\ar@{-}[dll]\ar@{-}[dl]\ar@{-}[dr]\ar@{-}[drr]\ar@{-}[d]\\
\cdots\ar@{-}[drr]&\bullet\ar@{-}[dr]&\bullet\ar@{-}[d]&\bullet\ar@{-}[dl]&\cdots\ar@{-}[dll]\\
&&\id
}\]

The regular $K$-modules form an extension closed exact abelian
subcategory of $\mod K$ that is uniserial. The simple objects of this
abelian category are parameterised by the closed points of the
projective line over $k$, which we identify with non-maximal and
non-zero homogeneous prime ideals $\frp\subseteq k[x,y]$.  We denote
the set of closed points by $\bbP^1(k)$ and write $\mathbf
2^{\bbP^1(k)}$ for its power set. Adding an extra greatest element
(the set of all non-maximal homogeneous prime ideals) to
$\mathbf 2^{\bbP^1(k)}$ yields a new lattice which we denote by
$\widehat{\mathbf 2}^{\bbP^1(k)}$. The simple object corresponding to
$\frp$ is denoted by $S_\frp$. We obtain an injective map
\[\widehat{\mathbf 2}^{\bbP^1(k)}\lto\bfT(\bfD^b(\mod K))\]
by sending $U\subseteq \bbP^1(k)$ to $\Thick(\{S_\frp\mid\frp\in U\})$
and $\bbP^1(k)\cup \{0\}$ to $\bfD^b(\mod K)$.

Let $L',L''$ be a pair of lattices with smallest elements $0',0''$ and
greatest elements $1',1''$. Denote by $L'\amalg L''$ the new lattice
which is obtained from the disjoint union $L'\cup L''$ (viewed as sum
of posets) by identifying $0'=0''$ and $1'=1''$.

\begin{prop}
The lattice of thick subcategories of $\bfD^b(\mod K)$ is isomorphic to
the lattice $\NC(W,c)\amalg \widehat{\mathbf2}^{\bbP^1(k)}$.
\end{prop}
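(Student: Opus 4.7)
The plan is to exhibit a bijection between $\bfT(\bfD^b(\mod K))$ and the amalgamated lattice $\NC(W,c) \amalg \widehat{\mathbf 2}^{\bbP^1(k)}$ via a case analysis on whether a thick subcategory $\U$ contains exceptional (preprojective or preinjective) indecomposables and/or regular indecomposables. Purely exceptional subcategories are routed to $\NC(W,c) \setminus \{c\}$ by Theorem~\ref{th:classif}, purely regular subcategories to $\mathbf 2^{\bbP^1(k)}$ via $U \mapsto \Thick(\{S_\frp : \frp \in U\})$, and the extreme cases $\U = 0$ and $\U = \bfD^b(\mod K)$ become the shared endpoints identified by the amalgamation.

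The principal step is a dichotomy: if $\U$ contains both an exceptional indecomposable $E$ and a regular indecomposable $R$, then $\U = \bfD^b(\mod K)$. First, $\U \ni R = R_\frp^{(l)}$ forces $\U \ni S_\frp$: the nilpotent endomorphism $t \in \End_K(R)$ factors as $R \twoheadrightarrow S_\frp \hookrightarrow R$, and the octahedral axiom yields a triangle $S_\frp[1] \to C_t \to S_\frp \to S_\frp[2]$; since $\mod K$ is hereditary the connecting morphism lies in $\Ext_K^2(S_\frp, S_\frp) = 0$, the triangle splits, and $S_\frp$ is a direct summand of $C_t \in \U$. Second, reducing to the preprojective case by the $k$-duality $(\mod K)^\op \cong \mod K^\op$, assume $E = P_n$ with $n \ge 0$; for $n \ge 1$, the Kronecker Euler-form identity $\langle P_n, S_\frp \rangle = 1$ together with $\Ext_K^1(P_n, S_\frp) = 0$ produces a surjection $f \colon P_n \twoheadrightarrow S_\frp$, and the short exact sequence $0 \to P_{n-1} \to P_n \to S_\frp \to 0$ gives $C_f \cong P_{n-1}[1] \in \U$. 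Iterating descends to $P_0 = S_2 \in \U$; combining with $S_\frp \in \U$ and the canonical short exact sequence $0 \to S_2 \to S_\frp \to S_1 \to 0$ then yields $S_1 \in \U$, whence $\U \supseteq \Thick(S_1, S_2) = \bfD^b(\mod K)$.

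With the dichotomy in hand, the remaining classification is routine. If $\U$ contains no exceptional, the same cone-of-endomorphism argument shows that each tube met by $\U$ lies entirely in $\U$, so $\U$ is determined by the set $\{\frp : S_\frp \in \U\} \subseteq \bbP^1(k)$. If $\U$ contains no regular, then $H^0\U$ is supported on the preprojective and preinjective slices of $\mod K$, hence has finite height and only finitely many simple objects; Proposition~\ref{pr:hered} provides a left adjoint to $\U \hookrightarrow \bfD^b(\mod K)$, and Theorem~\ref{th:classif} identifies $\U$ with an element of $\NC(W,c)$. Order preservation of the map from $\NC(W,c)$ is built into Theorem~\ref{th:classif}, while order preservation of the map from $\widehat{\mathbf 2}^{\bbP^1(k)}$ is immediate from the formula $U \mapsto \Thick(\{S_\frp : \frp \in U\})$; the only shared images of the two maps are $0$ and $\bfD^b(\mod K)$, producing precisely the identifications $\id \sim \emptyset$ and $c \sim \bbP^1(k) \cup \{0\}$ built into the amalgamated sum.

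The main obstacle is the dichotomy, and within it the identification $C_f \cong P_{n-1}[1]$; a fully rigorous verification would either perform the coordinate computation realising $P_n$ with arrows acting as shift operators $k^n \to k^{n+1}$, or invoke the uniqueness (up to isomorphism) of the nonzero element of the one-dimensional space $\Ext_K^1(S_\frp, P_{n-1})$, whose middle term is an indecomposable of dimension vector $(n, n+1)$ and therefore equal to $P_n$.
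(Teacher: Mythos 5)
Your proposal follows essentially the same route as the paper: amalgamate the two injective order-preserving maps from $\NC(W,c)$ and $\widehat{\mathbf 2}^{\bbP^1(k)}$, and reduce bijectivity to the key fact that a thick subcategory containing both an exceptional and a regular indecomposable is all of $\bfD^b(\mod K)$ (the paper's ``$\Thick(X,Y)=\bfD^b(\mod K)$''), which you, unlike the paper, actually prove. Two small imprecisions: for a regular $R=R_\frp^{(l)}$ with $l>2$ the octahedron on $t$ gives outer terms $R_\frp^{(l-1)}[1]$ and $R_\frp^{(l-1)}$ rather than shifts of $S_\frp$, so you need a descending induction on regular length; and your dimension counts ($\langle P_n,S_\frp\rangle=1$, the sequence $0\to S_2\to S_\frp\to S_1\to 0$) implicitly assume $\frp$ has degree one, whereas the paper allows arbitrary $k$ and hence higher-degree points $\frp$ with $\dim S_\frp=(d,d)$ --- the argument adapts (e.g.\ using $0\to S_2^d\to S_\frp\to S_1^d\to 0$ and that the image of a nonzero map $P_n\to S_\frp$ is never contained in the socle), but as written it does not cover this case.
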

\begin{proof}
We write $\bfT(K)= \bfT(\bfD^b(\mod K))$ and have injective maps
\[\NC(W,c)\lto \bfT(K)\quad\text{and}\quad
\widehat{\mathbf2}^{\bbP^1(k)}\lto\bfT(K)\] which induce an order preserving
map \[\NC(W,c)\amalg \widehat{\mathbf2}^{\bbP^1(k)}\lto\bfT(K).\] In
order to prove bijectivity, fix a pair of indecomposable $K$-modules
$X,Y$ such that $X$ is exceptional and $Y$ is regular.  We have
$\Thick(X)\cap\Thick(Y)=0$ and this gives injectivity; surjectivity
follows from the fact that $\Thick(X,Y)=\bfD^b(\mod K)$.
\end{proof}

The category of coherent sheaves on the projective line $\bbP^1_k$
admits a tilting object $T=\Oc(0)\oplus\Oc(1)$ such that $\End
(T)\cong K$; see \cite{Be1978}. Thus $\RHom(T,-)$ induces a triangle
equivalence
\[\bfD^b(\coh\bbP^1_k)\stackrel{\sim}\lto\bfD^b(\mod K)\]
and this yields a description of the lattice of thick subcategories of
$\bfD^b(\coh\bbP^1_k)$. Note that the category of coherent sheaves
carries a tensor product. The thick tensor subcategories have been
classified by Thomason \cite[Theorem~3.5]{Th1997}; they are precisely
the ones parameterised by subsets of $\bbP^1(k)$.

\begin{appendix}
\section{Auslander--Reiten theory}\label{se:appendix}

In this appendix we collect some basic facts from Auslander--Reiten
theory, as initiated by Auslander and Reiten in
\cite{AR1975}. Krull--Remak--Schmidt categories form the appropriate
setting for this theory, while exact or triangulated structures are
irrelevant for most parts; see also \cite{Ba,Li}. This material is
well-known, at least for categories that are linear over a field with
finite dimensional morphism spaces.  We provide full proofs for most
statements, including references whenever they are available.

Let $\C$ be an essentially small additive category. Then $\C$ is called
\emph{Krull--Remak--Schmidt category} if every object in $\C$ is a
finite coproduct of indecomposable objects with local endomorphism
rings.

It is convenient to view $\C$ as a ring with several objects. Thus we
use the category $\Mod\C$ of \emph{$\C$-modules}, which are by
definition the additive functors $\C^\op\to\Ab$ into the category of
abelian groups.

There is the following useful characterisation in terms of projective
covers.  Recall that a morphism $\p\colon P\to M$ is a
\emph{projective cover}, if $P$ is a projective object and $\p$ is an
\emph{essential epimorphism}, that is, a morphism $\a\colon P'\to P$
is an epimorphism if and only if $\p\a$ is an epimorphism.

\begin{prop}\label{pr:KRS}
For an essentially small additive category $\C$ with split idempotents the following are
equivalent.
\begin{enumerate}
\item The category $\C$ is a Krull--Remak--Schmidt category.
\item Every finitely generated $\C$-module admits a projective
  cover.
\end{enumerate} 
\end{prop}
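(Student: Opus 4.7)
The plan is to identify, via Yoneda and the split-idempotent hypothesis, the objects of $\C$ with the finitely generated projective $\C$-modules, and to exploit the standard characterisation of a projective cover $\pi\colon P\to M$ as an epimorphism with superfluous kernel, so that every endomorphism of $P$ whose image lies in $\ker\pi$ belongs to the Jacobson radical of $\End(P)$.

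For $(1)\Rightarrow(2)$, given a finitely generated $\C$-module $M$, I would choose an epimorphism $\pi\colon\Hom_\C(-,Y)\to M$, use the Krull--Remak--Schmidt hypothesis to write $Y=\coprod_{i=1}^{n}Y_i$ with each $\End_\C(Y_i)$ local, and then iteratively discard any summand $Y_i$ whose omission still leaves $\pi$ surjective. For the resulting minimal epimorphism $\pi'\colon P'\to M$ one checks that $\ker\pi'$ is superfluous by a Nakayama-type argument: any submodule $N$ satisfying $N+\ker\pi'=P'$ would project onto each $\Hom_\C(-,Y_j)$ modulo its unique maximal submodule, and locality of $\End_\C(Y_j)$ then forces $N=P'$.

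For $(2)\Rightarrow(1)$, fix $X\in\C$ and set $P=\Hom_\C(-,X)$. The first step is to split off, from each simple quotient $\sigma\colon P\to S$, an indecomposable summand of $P$ with local endomorphism ring. The projective cover $\pi\colon Q\to S$ granted by (2) admits, by projectivity of $P$, a lift $f\colon P\to Q$; essentiality of $\pi$ forces $f$ to be an epimorphism, which splits since $Q$ is projective, exhibiting $Q$ as a direct summand of $P$. By split idempotents $Q\cong\Hom_\C(-,X')$ with $X'$ a summand of $X$, and $\End_\C(X')\cong\End(Q)$ is local because every non-invertible endomorphism of $Q$ factors through the superfluous submodule $\ker\pi$, so the non-units form the Jacobson radical.

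The main obstacle is the \emph{finiteness} of the indecomposable decomposition of $X$. My plan is to argue that $R=\End_\C(X)$ is semiperfect: every finitely generated right $R$-module arises as the evaluation at $X$ of an appropriate finitely generated $\C$-module supported on $\add X$, and a projective cover of the latter (granted by (2)) restricts, via evaluation at $X$, to a projective cover of the former. Bass's characterisation then yields a decomposition $1_X=e_1+\cdots+e_n$ into primitive orthogonal idempotents with each $e_iRe_i$ local, which via split idempotents translates into the desired finite coproduct decomposition $X=\coprod_{i=1}^{n}X_i$ into indecomposables with local endomorphism rings.
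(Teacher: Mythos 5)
Your direction $(1)\Rightarrow(2)$ has a gap at its central step. After discarding summands you must show that the minimal epimorphism $\pi'\colon P'\to M$ has superfluous kernel, and the justification offered assumes what is to be proved: from $N+\Ker\pi'=P'$ one can only conclude that the images of $N$ and of $\Ker\pi'$ together cover $P'/\rad P'$; to deduce that $N$ alone surjects onto each $\Hom_\C(-,Y_j)/\rad\Hom_\C(-,Y_j)$ you already need $\Ker\pi'\subseteq\rad P'$, and that inclusion is exactly the content of the claim "minimality implies superfluous kernel". The claim is true, but it needs an argument, e.g.: if $\Ker\pi'\not\subseteq\rad P'$, choose $\a\colon\Hom_\C(-,C)\to P'$ with $C$ indecomposable, $\Im\a\subseteq\Ker\pi'$ and $\a$ nonzero modulo $\rad P'$; then some component $p_{j_0}\a$ is onto $\Hom_\C(-,Y_{j_0})$ modulo its unique maximal submodule, hence onto, hence split epi, hence invertible (both objects indecomposable), so $P'=\Im\a\oplus\bigoplus_{j\neq j_0}\Hom_\C(-,Y_j)$ with $\Im\a\subseteq\Ker\pi'$, contradicting minimality. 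The paper avoids this issue altogether: it first builds projective covers of simple modules from the locality of endomorphism rings (Lemma~\ref{le:covsimple}), observes that $M/\rad M$ is a finite sum of simples because it is a quotient of $P/\rad P$, and then lifts a projective cover of $M/\rad M$ along the essential epimorphism $M\to M/\rad M$. Either repair works, but as written your "Nakayama-type argument" is circular.

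Your direction $(2)\Rightarrow(1)$ is correct and genuinely different from the paper's. The paper stays inside $\Mod\C$: using projective covers of the quotients $P/U$ with $\rad P\subseteq U\subseteq P$ it shows that $P/\rad P$ is semisimple, writes it as a finite sum of simples, and recovers $P$ as the direct sum of their projective covers, each indecomposable with local endomorphism ring by Lemma~\ref{le:covsimple}. You instead transport the problem to $R=\End_\C(X)$ via the equivalence between $\Mod R$ and the $\C$-modules presented over $\add X$, check that projective covers pass through this equivalence (this does work: the projective cover of the induced module is a direct summand of some $\Hom_\C(-,X^n)$, hence again lies over $\add X$, and evaluation at $X$ preserves the condition of Lemma~\ref{le:procov}), and then quote the classical characterisation of semiperfect rings by idempotents. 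Your route is shorter on paper but outsources the combinatorial heart of the matter to the ring-theoretic theorem; the paper's argument is self-contained and proves that theorem along the way.
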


The proof requires some preparations, and we begin with two lemmas.

\begin{lem}\label{le:procov}
Let $P$ be a projective object. Then the following are equivalent for
an epimorphism $\p\colon P\to M$.
\begin{enumerate}
\item The morphism $\p$ is a projective cover of $M$.
\item Every endomorphism $\a\colon P\to P$ satisfying $\p\a=\p$
is an isomorphism.
\end{enumerate}
\end{lem}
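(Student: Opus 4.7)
The plan is to prove the two implications separately; the harder one is $(1)\Rightarrow (2)$, which needs the splitting provided by projectivity together with essentiality, while $(2)\Rightarrow(1)$ is a straightforward lifting argument.

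For $(1)\Rightarrow(2)$, I would start from $\a\colon P\to P$ with $\p\a=\p$. Since $\p$ is epi, so is $\p\a$, and essentiality then forces $\a$ itself to be epi. Because $P$ is projective, $\a$ admits a section $\s\colon P\to P$ with $\a\s=1_P$; applying the identity $\p\a=\p$ on the right to $\s$ gives $\p=\p\s$, and hence also $\p=\p(\s\a)$. Now consider the complementary idempotents $1-\s\a$ and $\s\a$ in $\End(P)$, which yield a decomposition $P=\Im(1-\s\a)\oplus\Im(\s\a)$; on this decomposition $\p$ kills the first summand because $\p(1-\s\a)=0$. The inclusion $j$ of $\Im(\s\a)$ into $P$ therefore satisfies $\p=\p j\pi$, where $\pi$ is the complementary retraction, so $\p j$ is epi. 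Applying essentiality to $\p j$ yields that $j$ is epi, which in turn forces $\Im(1-\s\a)=0$, i.e.\ $\s\a=1_P$. Together with $\a\s=1_P$ this gives that $\a$ is an isomorphism.

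For $(2)\Rightarrow(1)$, I need to show $\p$ is essential. Given a morphism $\beta\colon P'\to P$ with $\p\beta$ epi, use the projectivity of $P$ to lift $\p$ through $\p\beta$, obtaining $\gamma\colon P\to P'$ with $\p\beta\gamma=\p$. Setting $\a=\beta\gamma$, condition (2) makes $\a$ invertible, so $\gamma\a^{-1}$ is a section of $\beta$ and in particular $\beta$ is epi.

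The main obstacle I anticipate is the essentiality step in $(1)\Rightarrow(2)$: one must leverage the splitting of $\a$ to produce a direct summand to which essentiality can be applied, rather than trying to conclude directly that $\a$ is mono from the fact that $\a$ is epi and $P$ is projective (this is false in general). The idempotent $\s\a$ gives exactly the summand decomposition that turns the condition $\p\a=\p$ into a statement about a proper inclusion being epi, which is where essentiality bites. Everything else is formal manipulation in $\Mod\C$.
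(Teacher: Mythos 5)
Your proof is correct and follows essentially the same route as the paper's: both directions rest on the same two ingredients (essentiality applied to morphisms of the form $\p\b$ and the lifting property of $P$), and your $(2)\Rightarrow(1)$ is verbatim the paper's argument. The only difference is the last step of $(1)\Rightarrow(2)$: where you form the idempotent $\s\a$ and apply essentiality to the inclusion of the summand $\Im(\s\a)$, the paper applies essentiality directly to the section $\s$ itself (from $\p\s=\p$ it is an epimorphism, and being a split monomorphism it is then an isomorphism), which shortcuts your decomposition.
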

\begin{proof}
(1) $\Rightarrow$ (2): Let $\a\colon P\to P$ be an endomorphism
    satisfying $\p\a=\p$. Then $\a$ is an epimorphism since $\p$
    is essential. Thus there exists $\a'\colon P\to P$ satisfying
    $\a\a'=\id_P$ since $P$ is projective.  It follows that
    $\p\a'=\p$ and therefore $\a'$ is an epimorphism. On the
    other hand, $\a'$ is a monomorphism. Thus $\a'$ and $\a$ are
    isomorphisms.

(2) $\Rightarrow$ (1): Let $\a\colon P'\to P$ be a morphism such that
    $\p\a$ is an epimorphism. Then $\p$ factors through
    $\p\a$ via a morphism $\a'\colon P\to P'$ since $P$ is
    projective. The composite $\a\a'$ is an isomorphism and
    therefore $\a$ is an epimorphism. Thus $\p$ is essential.
\end{proof}

\begin{lem}
\label{le:covsimple}
Let $\p\colon P\to S$ be an epimorphism such that $P$ is projective
and $S$ is simple.  Then the following are equivalent.
\begin{enumerate}
\item The morphism $\p$ is a projective cover of $S$.
\item The object $P$ has a unique maximal subobject.
\item The endomorphism ring of $P$ is local.
\end{enumerate}
\end{lem}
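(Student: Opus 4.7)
My plan is to prove the implications (1)$\Rightarrow$(2)$\Rightarrow$(3)$\Rightarrow$(1) in a cycle, using throughout the characterisation of projective covers via endomorphisms given in Lemma~\ref{le:procov}.

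For (1)$\Rightarrow$(2), the idea is that essentiality of $\p$ forces $\Ker\p$ to be ``small'' in $P$: if $L\subseteq P$ is a subobject with $L+\Ker\p=P$, then the inclusion $L\hookrightarrow P$ followed by $\p$ is still epi, so by essentiality $L\hookrightarrow P$ itself is epi and thus $L=P$. Now take any maximal subobject $M\subsetneq P$. Either $\Ker\p\subseteq M$, in which case $M/\Ker\p$ is a proper subobject of the simple $S=P/\Ker\p$, hence zero, so $M=\Ker\p$; or $\Ker\p\not\subseteq M$, in which case $M+\Ker\p$ strictly contains $M$ and so equals $P$ by maximality, contradicting smallness of $\Ker\p$. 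Hence $\Ker\p$ is the unique maximal subobject.

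For (2)$\Rightarrow$(3), first observe that $P$ must be indecomposable: if $P=A\oplus B$ with both summands non-zero, then $A$ and $B$ are two proper subobjects both contained in the unique maximal $M$, forcing $P=A+B\subseteq M$, absurd. Next, let $M$ be the unique maximal subobject and $\pi\colon P\to P/M$. For any $\a\in\End(P)$, the image $\Im\a$ is either contained in $M$ (so $\pi\a=0$) or equals $P$ (so $\a$ is epi). In the latter case, projectivity gives a section $\b$ with $\a\b=1_P$; then $\b\a$ is an idempotent in the indecomposable $P$, necessarily $1_P$ (the alternative $\b\a=0$ forces $\b=\b\a\b=0$), so $\a$ is an iso. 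Now given $\a+\b=1_P$ in $\End(P)$, the equation $\pi=\pi\a+\pi\b$ prevents both $\pi\a$ and $\pi\b$ from vanishing, so at least one of $\a,\b$ is an iso. Hence the non-units in $\End(P)$ form a two-sided ideal, i.e.\ $\End(P)$ is local.

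For (3)$\Rightarrow$(1), I apply Lemma~\ref{le:procov}: given $\gamma\in\End(P)$ with $\p\gamma=\p$, I need $\gamma$ to be an iso. Since $\p(1_P-\gamma)=0$, the morphism $1_P-\gamma$ factors through $\Ker\p\subsetneq P$, so it cannot be an epi, hence is not a unit in $\End(P)$. In the local ring $\End(P)$, non-units form an ideal, so if $\gamma$ were also a non-unit then $1_P=\gamma+(1_P-\gamma)$ would be one, a contradiction; therefore $\gamma$ is a unit.

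The only delicate step is (2)$\Rightarrow$(3), specifically the indecomposability of $P$ and the treatment of epimorphic endomorphisms; everything else is a routine unwinding of definitions. I anticipate no serious obstacle, as the argument mirrors the classical module-theoretic case but uses only the abelian-category structure of $\Mod\C$.
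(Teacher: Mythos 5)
Your proof is correct and follows essentially the same route as the paper: the same cycle (1)$\Rightarrow$(2)$\Rightarrow$(3)$\Rightarrow$(1), the same reduction to Lemma~\ref{le:procov}, and the same key observations (smallness of $\Ker\p$, indecomposability of $P$, non-epimorphic endomorphisms landing in the unique maximal subobject). The only variation is in (3)$\Rightarrow$(1), where the paper applies Nakayama's lemma to the cyclic $\End(P)$-submodule of $\Hom(P,S)$ generated by $\p$, whereas you argue directly that $\id_P-\gamma$ is a non-unit; both are fine.
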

\begin{proof}
(1) $\Rightarrow$ (2): Let $U\subseteq P$ be a subobject and suppose
    $U\not\subseteq \Ker\p$. Then $U+\Ker\p=P$, and therefore $U=P$
    since $\p$ is essential.  Thus $\Ker\p$ contains every proper
    subobject of $P$.

    (2) $\Rightarrow$ (3): First observe that $P$ is an indecomposable
    object.  It follows that every endomorphism of $P$ is invertible
    if and only if it is an epimorphism. Given two non-units $\a,\b$
    in $\End(P)$, we have therefore
    $\Im(\a+\b)\subseteq\Im\a+\Im\b\subsetneq P$.  Here we use that
    $P$ has a unique maximal subobject. Thus $\a+\b$ is a non-unit and
    $\End(P)$ is local.

(3) $\Rightarrow$ (1): Consider the $\End(P)$-submodule $H$ of
$\Hom(P,S)$ which is generated by $\p$. Suppose $\p=\p\a$ for some
$\a$ in $\End(P)$.  If $\a$ belongs to the Jacobson radical, then
$H=H J(\End(P))$, which is not possible by Nakayama's lemma. Thus
$\a$ is an isomorphism since $\End(P)$ is local. It follows from
Lemma~\ref{le:procov} that $\p$ is a projective cover.
\end{proof}

Given any $\C$-module $M$, we denote by $\rad M$ its \emph{radical},
that is, the intersection of all maximal submodules of $M$.

\begin{proof}[Proof of Proposition~\ref{pr:KRS}]
First observe that the Yoneda functor $\C\to\Mod\C$ taking an object
$X$ to $\Hom_\C(-,X)$ identifies $\C$ with the category of finitely
generated projective $\C$-modules. The other tools are
Lemmas~\ref{le:procov} and \ref{le:covsimple}, which are used without
further reference.

(1) $\Rightarrow$ (2): Let $M$ be a simple $\C$-module and choose an
indecomposable object $X$ with $M(X)\neq 0$. Then there exists a
non-zero morphism $\Hom_\C(-,X)\to M$ which is a projective cover
since $\End_\C(X)$ is local. Thus every finite  sum of simple
$\C$-modules admits a projective cover.

Now let $M$ be a finitely generated $\C$-module and choose an
epimorphism $\p\colon P\to M$ with $P$ finitely generated
projective. Let $P=\bigoplus_iP_i$ be a decomposition into
indecomposable modules. Then
\[P/\rad P=\bigoplus_iP_i/\rad P_i\] is a finite sum of simple
$\C$-modules since each $P_i$ has a local endomorphism ring. The
epimorphism $\p$ induces an epimorphism $P/\rad P\to M/\rad M$ and
therefore $M/\rad M$ decomposes into finitely many simple modules.
There exists a projective cover $Q\to M/\rad M$ and this factors
through the canonical morphism $\pi\colon M\to M/\rad M$ via a
morphism $\psi \colon Q\to M$. The morphism $\pi$ is essential since
$M$ is finitely generated. It follows that $\psi$ is an
epimorphism. The morphism $\psi$ is essential since $\pi\psi$ is
essential. Thus $\psi$ is a projective cover.

(2) $\Rightarrow$ (1): Let $X$ be an object in $\C$.  We show that $X$
admits a decomposition into finitely many indecomposable objects. Set
$P=\Hom_\C(-,X)$. We claim that $P/\rad P$ is semisimple. Choose a
quotient $P/U$, where $\rad P\subseteq U\subseteq P$, and let $Q\to
P/U$ be a projective cover. This gives maps $P\to Q$ and $Q\to P$. The
composite $Q\to P\to Q$ is invertible and induces an isomorphism
$P/U\to P/\rad P\to P/U$. Thus the canonical morphism $P/\rad P\to
P/U$ has a right inverse, and we conclude that $P/\rad P$ is
semisimple. Let $P/\rad P=\bigoplus_i S_i$ be a decomposition into
finitely many simple objects and choose a projective cover $P_i\to
S_i$ for each $i$. Then $P\cong \bigoplus_i P_i$, since $P\to P/\rad
P$ and $ \bigoplus_i P_i\to \bigoplus_i S_i$ are both projective
covers. It remains to observe that each $P_i$ is indecomposable with a
local endomorphism ring.
\end{proof}

\begin{rem}
Fix a ring $A$ and let $\C$ be the category of finitely generated
projective $A$-modules.  Then the functor $\Mod\C\to\Mod A$ taking $X$
to $X(A)$ is an equivalence. The category $\C$ is
Krull--Remak--Schmidt if and only if the ring $A$ is semiperfect.
\end{rem}

From now on suppose that $\C$ is a Krull--Remak--Schmidt category.

\subsection*{The radical}
The  \emph{radical} of $\C$ is by definition the collection of subgroups 
\[\Rad_\C(X,Y)\subseteq\Hom_\C(X,Y)\]
for each pair $X,Y$ of objects in $\C$, where
\begin{align*}
\Rad_\C(X,Y)
&=\{\p\in\Hom_\C(X,Y)\mid\id_X-\p'\p\text{ is invertible for all
}\p'\colon Y\to X\}\\
&=\{\p\in\Hom_\C(X,Y)\mid\id_Y-\p\p'\text{ is invertible for all
}\p'\colon Y\to X\}.
\end{align*} 
The radical is a \emph{two-sided ideal} of $\C$, that is, a subfunctor
of $\Hom_\C(-,-)\colon\C^\op\times\C\to\Ab$. It is actually the unique
two-sided ideal $\mathfrak J$ of $\C$ such that $\mathfrak J(X,X)$
equals the Jacobson radical of $\End_\C(X)$ for each object $X$; see
\cite{Ke1964}.

Given two decompositions $X=\coprod_i X_i$ and $Y=\coprod_j Y_j$ in $\C$, we
have \[\Rad_\C(X,Y)=\bigoplus_{i,j}\Rad_\C(X_i,Y_j).\] A morphism
between indecomposable objects belongs to the radical if and only if
it is not invertible, since indecomposable objects have local
endomorphism rings.

\subsection*{Simple functors}
Given an object $X$ in $\C$, the functor $\Rad_\C(-,X)$ equals the
intersection of all maximal subfunctors of
$\Hom_\C(-,X)\colon\C^\op\to\Ab$. This observation has the following
consequence, where
\[S_X=\Hom_\C(-,X)/\Rad_\C(-,X).\]

\begin{lem}[{\cite[Proposition~2.3]{Au1974}}]\label{le:simples}
The map sending an object $X$ of $\C$ to the functor $S_X$
induces (up to isomorphism) a bijection between the indecomposable
objects of $\C$ and the simple objects of $\Mod\C$.
\end{lem}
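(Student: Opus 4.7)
The plan is to verify three facts: (a) $S_X$ is simple whenever $X$ is indecomposable, (b) every simple $\C$-module arises as $S_X$ for some indecomposable $X$, and (c) $S_X\cong S_Y$ forces $X\cong Y$.

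For (a), I take a non-zero subfunctor $T\subseteq S_X$ and pick a non-zero element of $T(Y)$ represented by $\tilde s\colon Y\to X$, so that $\tilde s\notin\Rad_\C(Y,X)$. By the definition of the radical, there exists $\p'\colon X\to Y$ such that $\id_X-\tilde s\p'$ is not invertible; locality of $\End_\C(X)$ then forces $u=\tilde s\p'$ to be an automorphism of $X$. For any object $Z$ and any $\tilde t\colon Z\to X$, the identity $\tilde t=\tilde s(\p' u^{-1}\tilde t)$ expresses the class of $\tilde t$ in $S_X(Z)$ as the precomposition of the class of $\tilde s\in T(Y)$ with the morphism $\p' u^{-1}\tilde t\colon Z\to Y$. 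Since $T$ is a subfunctor, this class lies in $T(Z)$; hence $T=S_X$.

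For (b), given a simple $\C$-module $S$, I pick any object $W$ with $S(W)\neq 0$ and decompose it into indecomposables to obtain a summand $X$ with $S(X)\neq 0$. A non-zero element of $S(X)$ corresponds by the Yoneda lemma to a non-zero, hence surjective, morphism $\Hom_\C(-,X)\to S$. Since $\End_\C(X)$ is local, Lemma~\ref{le:covsimple} identifies this surjection with the projective cover of $S$, so its kernel is the unique maximal subobject of $\Hom_\C(-,X)$; this must equal $\Rad_\C(-,X)$, because the latter is by definition the intersection of all maximal subfunctors and here there is only one. Thus $S\cong S_X$. For (c), an isomorphism $S_X\cong S_Y$ yields an isomorphism of projective covers (unique by Proposition~\ref{pr:KRS}), so $\Hom_\C(-,X)\cong\Hom_\C(-,Y)$, and the Yoneda lemma gives $X\cong Y$.

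The only delicate step is (a); (b) and (c) are essentially bookkeeping via the results already proved. The crux in (a) is to extract an automorphism of $X$ from the assumption $\tilde s\notin\Rad_\C(Y,X)$, and this is exactly what the defining condition of the radical yields in combination with the locality of $\End_\C(X)$.
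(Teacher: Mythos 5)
Your proof is correct and follows the same route as the paper's (very terse) argument: part (a) is a direct verification of the paper's key claim that, for $\End_\C(X)$ local, $\Rad_\C(-,X)$ is the unique maximal subfunctor of $\Hom_\C(-,X)$, and parts (b)--(c) unpack the paper's statement that the inverse map sends a simple $S$ to the unique indecomposable $X$ with $S(X)\neq 0$, using Lemma~\ref{le:covsimple} exactly as intended. The only cosmetic quibble is that the uniqueness of projective covers invoked in (c) is a standard fact about essential epimorphisms rather than the literal content of Proposition~\ref{pr:KRS}, but the argument is sound.
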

\begin{proof}
If an object $X$ has a local endomorphism ring, then $\Rad_\C(-,X)$ is
the unique maximal subobject of $\Hom_\C(-,X)$. Thus $S_X$ is simple
in that case, and the inverse map sends a simple object $S$ in
$\Mod\C$ to the unique indecomposable object $X$ in $\C$ such that
$S(X)\neq 0$.
\end{proof}

\subsection*{Irreducible morphisms}
The definition of the radical is extended recursively as follows. For
each $n> 1$ and each pair of objects $X,Y$ let $\Rad^n_\C(X,Y)$ be the
set of morphisms $\p\in\Hom_\C(X,Y)$ that admit a factorisation
$\p=\p''\p'$ with $\p'\in\Rad_\C(X,Z)$ and $\p''\in\Rad^{n-1}_\C(Z,Y)$
for some object $Z$. Then we set
\[\Irr_\C(X,Y)=\Rad_\C(X,Y)/\Rad^2_\C(X,Y).\] 
This is a bimodule over the rings $\De(X)$
and $\De(Y)$, where
\[\De(X)=\End_\C(X)/\Rad_\C(X,X).\]

Note that a morphism $X\to Y$ between indecomposable objects belongs
to $\Rad_\C(X,Y)\smallsetminus\Rad^2_\C(X,Y)$ if and only it is
irreducible. A morphism $\p$ is called \emph{irreducible} if $\p$ is
neither a section nor a retraction, and if $\p=\p''\p'$ is a
factorisation then $\p'$ is a section or $\p''$ is a retraction.

\begin{lem}\label{le:irr}
For each pair of indecomposable objects $X,Y$ in $\C$, we have
\[\Ext^1_\C(S_Y,S_X)\cong\Hom_{\De(X)}(\Irr_\C(X,Y),\De(X))\]
as bimodules over $\De(X)$ and $\De(Y)$.
\end{lem}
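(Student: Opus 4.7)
The plan is to compute $\Ext^1_\C(S_Y,S_X)$ via a minimal projective presentation of $S_Y$, identify $\Irr_\C(X,Y)$ as the value at $X$ of the semisimple top of the first syzygy, and then match the two through a natural composition pairing valued in $\De(X)$.

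\emph{First, compute $\Ext^1$.} Invoking Proposition~\ref{pr:KRS} and Lemma~\ref{le:procov}, fix a morphism $\pi\colon Y'\to Y$ making
\[\Hom_\C(-,Y')\xrightarrow{\pi_*}\Hom_\C(-,Y)\to S_Y\to 0\]
a minimal projective presentation, so that $\pi_*$ is a projective cover of $\Rad_\C(-,Y)$. Apply $\Hom_{\Mod\C}(-,S_X)$ and use the Yoneda identification $\Hom_{\Mod\C}(\Hom_\C(-,Z),S_X)=S_X(Z)$. The differential $S_X(Y)\to S_X(Y')$ is zero because every natural transformation $\Hom_\C(-,Y)\to S_X$ factors through $S_Y$ and hence vanishes on $\Im(\pi_*)\subseteq\Rad_\C(-,Y)$; similarly the next differential (landing in $\Rad_\C(-,Y')$ by minimality) vanishes on $S_X(Y')$. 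Hence $\Ext^1_\C(S_Y,S_X)\cong S_X(Y')$.

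\emph{Second, identify $\Irr_\C(X,Y)$ with $S_{Y'}(X)$.} Since $\pi\in\Rad_\C(Y',Y)$, post-composition with $\pi$ sends $\Rad_\C(-,Y')$ into $\Rad^2_\C(-,Y)$, so $\pi_*$ descends to
\[S_{Y'}\longrightarrow\Rad_\C(-,Y)/\Rad^2_\C(-,Y).\]
Surjectivity follows from $\Im(\pi_*)=\Rad_\C(-,Y)$ together with the identity $\pi_*(\Rad_\C(-,Y'))=\Rad^2_\C(-,Y)$ (itself a consequence of surjectivity of $\pi_*$ onto $\Rad_\C(-,Y)$ and the ideal property of $\Rad_\C$). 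For injectivity at $X$, if $f\colon X\to Y'$ satisfies $\pi\circ f\in\Rad^2_\C(X,Y)$, write $\pi\circ f=\pi\circ\beta$ with $\beta\in\Rad_\C(X,Y')$; then $f-\beta\in\Ker(\pi_*)\subseteq\Rad_\C(-,Y')$ because $\pi_*$ is essential (Lemma~\ref{le:procov}), so $f\in\Rad_\C(X,Y')$. Evaluating at $X$ gives $S_{Y'}(X)\xrightarrow{\sim}\Irr_\C(X,Y)$.

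\emph{Third, pair the two.} Composition defines a $\De(X)$-bilinear pairing
\[S_X(Y')\times S_{Y'}(X)\longrightarrow\De(X),\qquad (\bar g,\bar f)\longmapsto\overline{g\circ f},\]
well-defined because $\Rad_\C$ is a two-sided ideal. Writing $Y'=\coprod_iZ_i^{n_i}$ with pairwise non-isomorphic indecomposable $Z_i$ and letting $n_X$ denote the multiplicity of $X$ among the $Z_i$, both $S_X(Y')$ and $S_{Y'}(X)$ decompose as $\De(X)^{n_X}$, and the pairing becomes block-diagonal with each block the multiplication pairing of $\De(X)$, which is perfect. This yields
\[\Ext^1_\C(S_Y,S_X)\cong S_X(Y')\cong\Hom_{\De(X)}(S_{Y'}(X),\De(X))\cong\Hom_{\De(X)}(\Irr_\C(X,Y),\De(X)),\]
and the $(\De(X),\De(Y))$-bimodule structure is preserved because the whole construction is natural in $Y$ through the action of $\End_{\Mod\C}(S_Y)=\De(Y)$.

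\emph{The main obstacle} is the injectivity half of the second step: showing that post-composition with $\pi$ does not collapse extra elements of $S_{Y'}(X)$ into $\Rad^2_\C(X,Y)$. This rests on the essential-epimorphism property of the projective cover $\pi_*$ together with the identity $\pi_*(\Rad_\C(-,Y'))=\Rad^2_\C(-,Y)$. Everything else is routine once $S_{Y'}\cong\Rad_\C(-,Y)/\Rad^2_\C(-,Y)$ is established.
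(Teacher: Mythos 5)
Your argument from the second step onwards is sound, and it takes a genuinely different route from the paper's proof, which never chooses a projective presentation of $S_Y$ but instead applies $\Hom(-,S_X)$ to the two canonical exact sequences $0\to\Rad_\C(-,Y)\to\Hom_\C(-,Y)\to S_Y\to 0$ and $0\to\Rad^2_\C(-,Y)\to\Rad_\C(-,Y)\to\Irr_\C(-,Y)\to 0$. However, your very first step contains a genuine gap: the existence of a \emph{minimal projective presentation} of $S_Y$ with both terms representable. Proposition~\ref{pr:KRS} only provides projective covers of \emph{finitely generated} $\C$-modules; it gives the projective cover $\Hom_\C(-,Y)\to S_Y$, but to continue the presentation minimally you need a projective cover of the first syzygy $\Rad_\C(-,Y)$, and this module need not be finitely generated in a general Krull--Remak--Schmidt category. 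Equivalently, you are assuming that a minimal right almost split morphism $Y'\to Y$ exists (Lemma~\ref{le:cover}), an extra hypothesis absent from the statement. For a concrete failure, let $\C$ be the additive hull of a $k$-category with indecomposables $Y,X_1,X_2,\dots$, where $\Hom(X_i,Y)=k$, all other morphism spaces between non-isomorphic indecomposables vanish, and all endomorphism rings are $k$: then $\Rad_\C(-,Y)$ is not finitely generated, no object $Y'$ as in your first step exists, yet the lemma holds, with $\Ext^1_\C(S_Y,S_{X_i})\cong k\cong\Hom_k(\Irr_\C(X_i,Y),k)$.

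In the settings where the lemma is actually applied (locally noetherian or locally finite triangulated categories, where minimal right almost split morphisms ending at every indecomposable exist), your hypothesis is satisfied and the rest of your proof goes through: the computation $\Ext^1_\C(S_Y,S_X)\cong S_X(Y')$ from the vanishing of both differentials, the identification $S_{Y'}(X)\cong\Irr_\C(X,Y)$ via essentiality of $\pi_*$ together with $\pi_*(\Rad_\C(-,Y'))=\Rad^2_\C(-,Y)$, and the perfect composition pairing are all correct. Your method has the added benefit of exhibiting $\Irr_\C(X,Y)$ as the multiplicity space of $X$ in $Y'$, which is essentially the content of Lemma~\ref{le:AR}. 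To prove the lemma in its stated generality, replace your first two steps by the paper's observation that any morphism $\Rad_\C(-,Y)\to S_X$ automatically annihilates $\Rad^2_\C(-,Y)$, since precomposition with a radical morphism acts as zero on $S_X$; then no choice of $Y'$ is needed.
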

\begin{proof}
Applying $\Hom_\C(-,S_X)$ to the exact sequence
\[0\lto\Rad_\C(-,Y)\lto\Hom_\C(-,Y)\lto S_Y\lto 0\]
gives 
\begin{equation*}\label{eq:ext}
\Ext^1_\C(S_Y,S_X)\cong\Hom_\C(\Rad_\C(-,Y),S_X).
\end{equation*}
Then applying  $\Hom_\C(-,S_X)$ to the exact sequence
\[0\lto\Rad^2_\C(-,Y)\lto\Rad_\C(-,Y)\lto \Irr_\C(-,Y)\lto 0\]
gives \[\Ext^1_\C(S_Y,S_X)\cong\Hom_\C(\Irr_\C(-,Y),S_X)
\cong\Hom_{\De(X)}(\Irr_\C(X,Y),\De(X))\] since $S_X(X)=\De(X)$.
\end{proof}

\subsection*{Almost split morphisms}
A morphism $\p\colon X\to Y$ is called \emph{right almost split} if
$\p$ is not a retraction and if every morphism $X'\to Y$ that is not a
retraction factors through $\p$. The morphism $\p$ is \emph{right
  minimal} if every endomorphism $\a\colon X\to X$ with $\p\a=\p$ is
invertible. Note that $Y$ is indecomposable if $\p$ is right almost
split.  \emph{Left almost split} morphisms and \emph{left minimal}
morphisms are defined dually. The term \emph{minimal right almost
  split} means \emph{right minimal} and \emph{right almost split}.

Recall that a projective presentation
\[P_n\stackrel{\d_n}\lto P_{n-1}\stackrel{\d_{n-1}}\lto\cdots \stackrel{\d_{2}}\lto P_1
\stackrel{\d_{1}}\lto P_0\stackrel{\d_0}\lto M\lto 0\]
is \emph{minimal} if each morphism $P_i\to\Im\d_i$ is a projective cover.

\begin{lem}[{\cite[Chap.~II, Proposition~2.7]{Au1978}}]\label{le:cover}
A  morphism $X\to Y$ in $\C$ is minimal right almost split
if and only if it induces in $\Mod\C$ a minimal projective
presentation
\[\Hom_\C(-,X)\lto\Hom_\C(-,Y)\lto S\lto 0\]
of a simple object.
\end{lem}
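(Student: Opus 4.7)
The plan is to translate the two defining conditions — right almost split and right minimal — into module-theoretic conditions on the induced sequence in $\Mod\C$ via Yoneda, using Lemmas~\ref{le:procov} and \ref{le:covsimple}.

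First I would set $I = \Im(\Hom_\C(-,\p)) \subseteq \Hom_\C(-,Y)$, so that the induced sequence factors as
\[
\Hom_\C(-,X) \twoheadrightarrow I \hookrightarrow \Hom_\C(-,Y) \twoheadrightarrow S,
\]
with $S = \Hom_\C(-,Y)/I$. The common elementary ingredient for both directions is the following \emph{retraction dichotomy}: if $Y$ is indecomposable, then for every object $X'$ and every $\p' \colon X' \to Y$, $\p'$ is a retraction if and only if $\p' \notin \Rad_\C(X',Y)$. To verify this, decompose $X' = \coprod X'_i$; using that $\End_\C(Y)$ is local, a section of $\p'$ exists if and only if some component $\p'_i \colon X'_i \to Y$ is a split epimorphism, which (since $X'_i$ and $Y$ are both indecomposable) is equivalent to $\p'_i$ being invertible, i.e.\ to $\p'_i \notin \Rad_\C(X'_i,Y)$.

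For the forward direction, assume $\p$ is minimal right almost split. Then $Y$ is indecomposable and $\p$ is not a retraction, so by Lemmas~\ref{le:simples} and \ref{le:covsimple} the canonical map $\Hom_\C(-,Y) \to S_Y = \Hom_\C(-,Y)/\Rad_\C(-,Y)$ is a projective cover of the simple module $S_Y$. The dichotomy combined with right-almost-splitness says that every morphism in $\Rad_\C(-,Y)$ factors through $\p$, i.e.\ $\Rad_\C(-,Y) \subseteq I$; the reverse inclusion is automatic, since $\p \in \Rad_\C(X,Y)$. Hence $I = \Rad_\C(-,Y)$ and $S \cong S_Y$. It remains to show that $\Hom_\C(-,X) \twoheadrightarrow I$ is a projective cover, which by Lemma~\ref{le:procov} amounts to the statement that every endomorphism $\a$ of $X$ with $\p\a = \p$ is invertible — exactly the right minimality of $\p$.

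Conversely, given a minimal projective presentation $\Hom_\C(-,X) \to \Hom_\C(-,Y) \to S \to 0$ with $S$ simple, the map $\Hom_\C(-,Y) \to S$ is a projective cover of a simple, so Lemma~\ref{le:covsimple} forces $\End_\C(Y)$ to be local and thus $Y$ to be indecomposable. Then $S \cong S_Y$, so $I = \Rad_\C(-,Y)$, and the dichotomy yields that every non-retraction $X' \to Y$ factors through $\p$, while $\p \in \Rad_\C(X,Y)$ is itself not a retraction; hence $\p$ is right almost split. The second half of the minimality condition — that $\Hom_\C(-,X) \twoheadrightarrow \Rad_\C(-,Y)$ is a projective cover — translates via Lemma~\ref{le:procov} back to the right minimality of $\p$. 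The only step requiring any real attention is the retraction dichotomy; once that is in place, both implications are a bookkeeping exercise with Lemmas~\ref{le:procov}, \ref{le:covsimple} and \ref{le:simples}.
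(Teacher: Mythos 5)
Your proposal is correct and follows essentially the same route as the paper's proof: both reduce the statement to the three equivalences ``$\Hom_\C(-,Y)\to S$ is a projective cover of a simple iff $\End_\C(Y)$ is local'' (Lemma~\ref{le:covsimple}), ``the image equals $\Rad_\C(-,Y)$ iff $\p$ is right almost split,'' and ``$\Hom_\C(-,X)\to\Im$ is a projective cover iff $\p$ is right minimal'' (Lemma~\ref{le:procov}). The only difference is that you spell out the retraction dichotomy that the paper leaves implicit in the middle step, which is a reasonable thing to make explicit.
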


\begin{proof}
A non-zero morphism $\Hom_\C(-,Y)\to S$ to a simple object is a
projective cover if and only if $\End_\C(Y)$ is local.  In that case
the exactness of the sequence means that the image of
$\Hom_\C(-,X)\to\Hom_\C(-,Y)$ equals $\Rad_\C(-,Y)$; it is therefore
equivalent to the fact that $X\to Y$ is right almost split. The
canonical morphism from $\Hom_\C(-,X)$ to the image of
$\Hom_\C(-,X)\to\Hom_\C(-,Y)$ is a projective cover if and only if
$X\to Y$ is right minimal.
\end{proof}

Almost split morphisms and irreducible
morphisms are related as follows.

\begin{lem}[{\cite[Corollary~3.4]{Ba}}]\label{le:AR}
Let $X\to Y$ be a minimal right almost split morphism in $\C$ and let
$X=X_1^{n_1}\amalg\ldots\amalg X_r^{n_r}$ be a decomposition into
indecomposable objects such that the $X_i$ are pairwise
non-isomorphic.  Given an indecomposable object $X'$, one has
$\Irr_\C(X',Y)\neq 0$ if and only if $X'\cong X_i$ for some $i$. In
that case $n_i$ equals the length of $\Irr_\C(X',Y)$ over $\De(X')$.
\end{lem}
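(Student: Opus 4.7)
The plan is to compute $\Ext^1_\C(S_Y,S_{X'})$ in two different ways and compare. First, by Lemma~\ref{le:cover} the given morphism induces a minimal projective presentation
\[\Hom_\C(-,X)\lto\Hom_\C(-,Y)\lto S_Y\lto 0\]
of the simple $\C$-module $S_Y$. Applying $\Hom_{\Mod\C}(-,S_{X'})$ to this presentation, using the Yoneda identification $\Hom_{\Mod\C}(\Hom_\C(-,Z),S_{X'})\cong S_{X'}(Z)$ together with the projectivity of representable functors, yields the four-term exact sequence
\[0\lto\Hom_{\Mod\C}(S_Y,S_{X'})\lto S_{X'}(Y)\lto S_{X'}(X)\lto\Ext^1_\C(S_Y,S_{X'})\lto 0.\]

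Next I would analyse each term. Since $X'$ is indecomposable and the $X_i$ are pairwise non-isomorphic,
\[S_{X'}(X)=\bigoplus_i S_{X'}(X_i)^{n_i},\]
and this equals $\De(X')^{n_{i_0}}$ when $X'\cong X_{i_0}$ for a (unique) $i_0$, and vanishes otherwise. Both $\Hom_{\Mod\C}(S_Y,S_{X'})$ and $S_{X'}(Y)$ are zero unless $X'\cong Y$, in which case they both equal $\De(Y)$; the connecting morphism between them is induced by the projection $\Hom_\C(-,Y)\to S_Y$ and is easily checked to be the identity of $\De(Y)$. In either situation one extracts
\[\Ext^1_\C(S_Y,S_{X'})\cong S_{X'}(X).\]

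Finally I would invoke Lemma~\ref{le:irr}, which provides
\[\Ext^1_\C(S_Y,S_{X'})\cong\Hom_{\De(X')}(\Irr_\C(X',Y),\De(X')).\]
Since $\De(X')$ is a division ring, the $\De(X')$-dimension of the right hand side equals the length of $\Irr_\C(X',Y)$ over $\De(X')$. Combining with the previous computation yields both statements: $\Irr_\C(X',Y)\neq 0$ if and only if $X'\cong X_i$ for some $i$, and in that case the length of $\Irr_\C(X',Y)$ over $\De(X')$ equals $n_i$. The main subtlety to watch is the case $X'\cong Y$, in which $Y$ may carry a loop and the four-term sequence is not simply $0\to 0\to S_{X'}(X)\to\Ext^1\to 0$; one must verify that the extra terms cancel as claimed so that the conclusion is uniform in $X'$.
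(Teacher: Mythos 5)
Your argument is correct, but it takes a genuinely different route from the paper. The paper also starts from the minimal projective presentation $\Hom_\C(-,X)\to\Hom_\C(-,Y)\to S_Y\to 0$ of Lemma~\ref{le:cover}, but then works directly with $\Irr_\C(-,Y)$ rather than with its dual: the induced projective cover $\Hom_\C(-,X)\to\Rad_\C(-,Y)$ has kernel contained in $\Rad_\C(-,X)$, hence induces an isomorphism of functors $\Hom_\C(-,X)/\Rad_\C(-,X)\xto{\sim}\Rad_\C(-,Y)/\Rad^2_\C(-,Y)=\Irr_\C(-,Y)$, and the left-hand side is $S_{X_1}^{n_1}\amalg\ldots\amalg S_{X_r}^{n_r}$; evaluating at $X'$ finishes the proof. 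Your route instead computes the dual object $\Ext^1_\C(S_Y,S_{X'})$ from the presentation and then passes back to $\Irr_\C(X',Y)$ through Lemma~\ref{le:irr} and a dimension count over the division ring $\De(X')$; the paper's proof of this lemma does not invoke Lemma~\ref{le:irr} at all and yields the slightly stronger functorial statement $\Irr_\C(-,Y)\cong\coprod_i S_{X_i}^{n_i}$, while yours is a clean illustration of how the $\Ext$-quiver of $\bfA(\T)$ matches the Auslander--Reiten quiver. Two small points to tighten: the exactness of your four-term sequence at the rightmost spot is not a consequence of projectivity alone --- it uses minimality of the presentation (the kernel of $\Hom_\C(-,X)\to\Rad_\C(-,Y)$ lies in $\Rad_\C(-,X)$, and every morphism to the simple $S_{X'}$ kills the radical); and the same minimality shows that the middle map $S_{X'}(Y)\to S_{X'}(X)$ is zero for every $X'$, since the differential lands in $\Rad_\C(-,Y)$. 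The latter observation makes your case distinction for $X'\cong Y$ unnecessary: one always gets $\Hom(S_Y,S_{X'})\cong S_{X'}(Y)$ and $\Ext^1_\C(S_Y,S_{X'})\cong S_{X'}(X)$ directly, so no cancellation argument is needed.
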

\begin{proof}
The morphism $X\to Y$ induces a minimal projective presentation
\[\Hom_\C(-,X)\lto\Hom_\C(-,Y)\lto S_Y\lto 0\]
by Lemma~\ref{le:cover}, and therefore a projective
cover \[\pi\colon\Hom_\C(-,X)\lto\Rad_\C(-,Y).\] This morphism induces
an isomorphism
\[\Hom_\C(-,X)/\Rad_\C(-,X)\stackrel{\sim}\lto \Rad_\C(-,Y)/\Rad^2_\C(-,Y)\]
since $\Ker\pi\subseteq\Rad_\C(-,X)$.  On the other hand, the
decomposition of $X$ implies
\[\Hom_\C(-,X)/\Rad_\C(-,X)\cong S_{X_1}^{n_1}\amalg\ldots\amalg S_{X_r}^{n_r}.\] 
It remains to observe that $S_{X_i}(X')\neq 0$ if and only if $X'\cong
X_i$.
\end{proof}

\subsection*{Almost split sequences}
The following definition of an almost split sequence is taken from Liu
\cite{Li}; it covers the original definition of Auslander and Reiten
for abelian categories \cite{AR1975}, but also Happel's definition of
an Auslander--Reiten triangle in a triangulated category
\cite{Ha1987}.

A sequence of morphisms $X\xto{\a} Y\xto{\b} Z$ in $\C$ is called
\emph{almost split} if
\begin{enumerate}
\item $\a$ is minimal left almost split and a weak kernel of $\b$, 
\item $\b$ is minimal right almost split and a weak cokernel of $\a$, and
\item $Y\neq 0$.
\end{enumerate}

The end terms $X$ and $Z$ determine each other up to isomophism, and
we write $X=\t Z$ and $Z=\t^{-1}X$. One calls $\t Z$ the
\emph{Auslander--Reiten translate} of $Z$.

\begin{lem}\label{le:ass}
A sequence of morphisms $X\to Y\to Z$ in $\C$ is almost split if and
only if it induces two minimal projective presentations
\begin{gather*}\Hom_\C(-,X)\to\Hom_\C(-,Y)\to\Hom_\C(-,Z)\to S_Z\to 0\\
\Hom_\C(Z,-)\to\Hom_\C(Y,-)\to\Hom_\C(X,-)\to S^X\to 0
\end{gather*}
where we use the notation
\[S^X=\Hom_\C(X,-)/\Rad_\C(X,-).\]
\end{lem}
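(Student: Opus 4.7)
The plan is to deduce the lemma by applying Lemma~\ref{le:cover} twice---once to $\b\colon Y\to Z$ in $\C$ (yielding the first projective presentation) and once in the opposite category to $\a\colon X\to Y$ (yielding the second)---and then matching the remaining data on each side via the Yoneda lemma.

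In the converse direction, Lemma~\ref{le:cover} applied to the three-term tail $\Hom_\C(-,Y)\to\Hom_\C(-,Z)\to S_Z\to 0$ immediately gives that $\b$ is minimal right almost split; dually, the second presentation yields that $\a$ is minimal left almost split. Exactness of the first sequence at $\Hom_\C(-,Y)$ is, via Yoneda applied termwise, the statement that every morphism $W\to Y$ killed by $\b$ factors through $\a$, i.e., that $\a$ is a weak kernel of $\b$; symmetrically the second presentation yields the weak cokernel condition on $\b$. The condition $Y\neq 0$ is automatic because $\Hom_\C(-,Y)$ must surject onto the nonzero submodule $\Rad_\C(-,Z)\subseteq\Hom_\C(-,Z)$ (nonzero since $S_Z$ is a proper simple quotient when $Z$ is indecomposable and $\b$ is not a retraction).

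In the forward direction, the same Yoneda translations handle exactness and the rightmost minimality conditions. The only subtle point is the minimality at the leftmost term $\Hom_\C(-,X)$ of the first presentation: by Lemma~\ref{le:procov}, this amounts to showing that every $\gamma\colon X\to X$ with $\a\gamma=\a$ is invertible. Since $X$ is indecomposable (it is the source of a left almost split morphism), a non-invertible $\gamma$ would lie in $\Rad_\C(X,X)$, so $\id_X-\gamma$ would be invertible and $\a(\id_X-\gamma)=0$ would force $\a=0$. But $\a=0$ is incompatible with the almost split hypothesis: the weak cokernel property of $\b$ then forces every morphism out of $Y$ to factor through $\b$, so $\id_Y$ does, whence $\b$ is a section; combined with $Z$ indecomposable and $Y\neq 0$ this would make $\b$ an isomorphism, hence a retraction, contradicting minimal right almost split. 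The leftmost minimality of the second presentation is obtained by the dual argument in $\C^\op$.

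The main obstacle is this last step: the definition of almost split enforces \emph{left} minimality of $\a$, whereas the projective presentation in $\Mod\C$ demands \emph{right} minimality of $\a$. The bridge is the indecomposability of $X$ together with the nonvanishing of $\a$, which requires the short radical-plus-section contradiction sketched above; once this is in place, everything else is a direct translation under the Yoneda embedding.
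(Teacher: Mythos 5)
Your proof is correct and follows the paper's route: the paper's entire proof is ``Apply Lemma~\ref{le:cover}'', and you supply precisely the details that one line suppresses, including the genuinely non-obvious step of converting the \emph{left} minimality of $\a$ into the \emph{right} minimality required at $\Hom_\C(-,X)$, via indecomposability of $X$ and $\a\neq 0$. One quibble: your justification that $\Rad_\C(-,Z)\neq 0$ is circular ($S_Z$ being a \emph{proper} quotient is exactly that claim); it is cleaner to observe that $Y=0$ would force $X=0$ by minimality at $\Hom_\C(-,X)$, contradicting the indecomposability of $X$ implicit in $S^X$ being simple.
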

\begin{proof}
Apply Lemma~\ref{le:cover}.
\end{proof}

The Auslander--Reiten translate is functorial in the following sense.

\begin{lem}\label{le:tau}
Let $X,Y$ be indecomposable objects in $\C$ and suppose their
Auslander--Reiten translates are defined. Then $\De(X)\cong\De(\t X)$
and $\De(Y)\cong\De(\t Y)$ as division rings. Using these
isomorphisms, we have
\[\Hom_{\De(X)}(\Irr_\C(X,Y),\De(X))\cong\Hom_{\De(\t Y)}(\Irr_\C(\t X,\t Y),\De(\t Y))\]
as bimodules over $\De(X)$ and $\De(Y)$.
\end{lem}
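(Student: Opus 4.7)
The strategy is to realise both sides of the isomorphism as Ext-groups of simple modules (in $\Mod\C$ and $\Mod\C^\op$ respectively), and then link them via an Auslander--Reiten duality computation. By Lemma~\ref{le:irr},
\[\Hom_{\De(X)}(\Irr_\C(X, Y), \De(X)) \cong \Ext^1_{\Mod\C}(S_Y, S_X).\]
Applying the same lemma to the opposite category $\C^\op$ with the pair $(\t Y, \t X)$, and using the identifications $\Irr_{\C^\op}(\t Y, \t X) = \Irr_\C(\t X, \t Y)$ and $\De_{\C^\op}(\t Y) = \De(\t Y)^\op$, the right-hand side is identified with $\Ext^1_{\Mod\C^\op}(S^{\t X}, S^{\t Y})$, with the bimodule structure transported through the opposite-ring conventions.

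The proof then reduces to establishing the duality
\[\Ext^1_{\Mod\C}(S_Y, S_X) \cong \Ext^1_{\Mod\C^\op}(S^{\t X}, S^{\t Y})\]
together with canonical ring isomorphisms $\De(X) \cong \De(\t X)$ and $\De(Y) \cong \De(\t Y)$. By Lemma~\ref{le:ass}, the single almost split sequence $\t Y \to E_Y \to Y$ yields a minimal projective presentation of $S_Y$ in $\Mod\C$; a Yoneda computation from this presentation identifies $\Ext^1_{\Mod\C}(S_Y, S_X)$ with (a quotient of) $S_X(E_Y)$, which by Lemma~\ref{le:AR} is isomorphic to $\De(X)^n$ for $n$ equal to the multiplicity of $X$ as a direct summand of $E_Y$. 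An analogous computation in $\Mod\C^\op$, using the almost split sequence $\t X \to E_X \to X$, shows that $\Ext^1_{\Mod\C^\op}(S^{\t X}, S^{\t Y}) \cong S^{\t Y}(E_X) \cong \De(\t Y)^{n'}$ where $n'$ is the multiplicity of $\t Y$ in $E_X$. Applying Lemma~\ref{le:AR} (on the right almost split side) to the sequence through $Y$, and its dual form (on the left almost split side) to the sequence through $X$, identifies both multiplicities with the length of the same irreducible morphism space. The ring isomorphisms $\De(X) \cong \De(\t X)$ and $\De(Y) \cong \De(\t Y)$ then arise from comparing the endomorphism actions of the simples $S_Y,\, S^{\t Y}$ (and likewise $S_X,\, S^{\t X}$) on the common Ext-group under this correspondence.

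The principal obstacle is the careful tracking of bimodule structures throughout these identifications. In particular, the apparent asymmetry in the statement (duality taken over $\De(X)$ on one side and over $\De(\t Y)$ on the other) is precisely what is needed to compensate for the left/right flip that occurs in passing from $\Mod\C$ to $\Mod\C^\op$. Verifying that the bimodule structure on the right-hand side, obtained via opposite-ring conventions and the canonical identifications $\De(X) \cong \De(\t X)$ and $\De(Y) \cong \De(\t Y)$, genuinely matches that on the left-hand side is the most delicate point, and requires explicit manipulation of the Yoneda isomorphisms at every stage. Handling the degenerate cases $X \cong Y$ or $X \cong \t Y$, where additional image terms enter the Yoneda computation of $\Ext^1$, will also need separate verification.
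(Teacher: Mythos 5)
Your opening reduction is the same as the paper's: by Lemma~\ref{le:irr} applied to $\C$ and to $\C^\op$, the statement amounts to an isomorphism $\Ext^1_{\Mod\C}(S_Y,S_X)\cong\Ext^1_{\Mod\C^\op}(S^{\t X},S^{\t Y})$ compatible with ring isomorphisms $\De(X)\cong\De(\t X)$ and $\De(Y)\cong\De(\t Y)$. But the way you propose to establish this duality has a genuine gap, in fact two. First, the rank comparison fails: your Yoneda computation gives $\Ext^1_{\Mod\C}(S_Y,S_X)\cong\De(X)^n$ with $n$ the multiplicity of $X$ in the middle term of the sequence ending at $Y$, and $\Ext^1_{\Mod\C^\op}(S^{\t X},S^{\t Y})\cong\De(\t Y)^{n'}$ with $n'$ the multiplicity of $\t Y$ in the middle term of the sequence ending at $X$. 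By Lemma~\ref{le:AR} and its dual these are $n=\d_{X,Y}=\d'_{\t Y,X}=\ell_{\De(X)}(\Irr_\C(\t Y,X))$ and $n'=\d_{\t Y,X}=\ell_{\De(\t Y)}(\Irr_\C(\t Y,X))$. So both multiplicities are indeed lengths of the same bimodule $\Irr_\C(\t Y,X)$, but over the \emph{two different} division rings $\De(X)$ and $\De(\t Y)$; these are the two components of the valuation of the arrow $\t Y\to X$ and need not coincide (think of a valued arrow $(1,2)$ in type $B_2$). So $n=n'$ is false in general, and the two Ext-groups are not identified this way. Second, even where the ranks happen to agree, knowing that two modules are free of the same rank over two a priori unrelated division rings produces neither the asserted bimodule isomorphism nor the ring isomorphisms $\De(X)\cong\De(\t X)$ and $\De(Y)\cong\De(\t Y)$; your final sentence invokes ``the common Ext-group,'' but no common group has been constructed, so the argument is circular at exactly the point where the content of the lemma lies.

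The missing idea is the mechanism that makes the duality canonical: the paper works in the stable module category $\umod\C$ and uses the syzygy $\Om$ and the transpose $\Tr\colon(\umod\C)^\op\xto{\sim}\umod\C^\op$. The two minimal presentations of Lemma~\ref{le:ass} (for the sequences ending at $X$ and at $Y$) give $\Om S_X\cong\Tr S^{\t X}$ and $\Om S_Y\cong\Tr S^{\t Y}$; then
\[\Ext^1_\C(S_Y,S_X)\cong\uHom_\C(\Om S_Y,S_X)\cong\uHom_{\C^\op}(\Tr S_X,S^{\t Y})\cong\uHom_{\C^\op}(\Om S^{\t X},S^{\t Y})\cong\Ext^1_{\C^\op}(S^{\t X},S^{\t Y}),\]
and the same identifications $\Om S_X\cong\Tr S^{\t X}$, applied to endomorphisms, produce the division ring isomorphisms $\De(X)\cong\uEnd_\C(S_X)\cong\uEnd_{\C^\op}(S^{\t X})^\op\cong\De(\t X)$ together with the compatibility of the bimodule structures. (One also needs the observation that $\Om S_Y=\Rad_\C(-,Y)$ has no projective summand, so that $\Hom$ and $\uHom$ agree here.) If you want to repair your argument, replace the rank count by this functorial step; the Yoneda computations you describe are fine for computing lengths but cannot carry the bimodule information.
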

\begin{proof}
We use the \emph{stable category} of finitely presented $\C$-modules
which we denote by $\umod\C$. The objects are the $\C$-modules $M$
that admit a presentation
\[\Hom_\C(-,X) \stackrel{(-,\p)}\lto\Hom_\C(-,Y)\lto M\lto 0\tag{$*$}\]
and the morphisms are all $\C$-linear morphisms modulo the subgroup of
morphisms that factor through a projective $\C$-module. There are two functors
\[\Om\colon\umod\C\lto\umod\C\quad\text{and}\quad\Tr\colon\umod\C\lto\umod\C^\op\]
taking a module to its \emph{syzygy} and its \emph{transpose},
respectively. Both functors are defined for a module $M$ with
presentation $(*)$ via  exact sequences as follows:
\begin{gather*}0\lto\Om M\lto\Hom_\C(-,Y)\lto M\lto 0\\
\Hom_\C(Y,-)\stackrel{(\p,-)}\lto\Hom_\C(X,-)\lto \Tr M\lto 0
\end{gather*}
Note that the transpose yields an equivalence $(\umod\C)^\op\xto{\sim}\umod\C^\op$.

Using the minimal projective presentations of $S_X$ and $S^{\t X}$
from Lemma~\ref{le:ass}, one gets isomorphisms
\[\Om S_X\cong\Tr S^{\t X}\qquad\text{and}\qquad\Om S^{\t X}\cong\Tr S_X.\]
These yield mutually inverse maps
\begin{multline*}
\De(X)\xto{\sim}\uEnd_\C(S_X)\to \uEnd_\C(\Om
S_X)\xto{\sim}\\ \uEnd_\C(\Tr S^{\t X})\xto{\sim}
\uEnd_{\C^\op}(S^{\t X})^\op \xto{\sim}\De(\t X)
\end{multline*}
and 
\begin{multline*}
\De(\t X)\xto{\sim}\uEnd_{\C^\op}(S^{\t X})^\op
\to\uEnd_{\C^\op}(\Om S^{\t X})^\op
\xto{\sim}\\ \uEnd_{\C^\op}(\Tr S_{ X})^\op
\xto{\sim} \uEnd_{\C}(S_{ X})\xto{\sim}\De(X).
\end{multline*}

Next we apply Lemma~\ref{le:irr} and obtain
\begin{align*}
\Hom_{\De(X)}(\Irr_\C(X,Y),\De(X))
&\cong\Ext^1_\C(S_Y,S_X)\\
&\cong\uHom_\C(\Om S_Y,S_X)\\
&\cong\uHom_\C(\Tr S^{\t Y},S_X)\\
&\cong\uHom_{\C^\op}(\Tr S_X,S^{\t Y})\\
&\cong\uHom_{\C^\op}(\Om S^{\t X},S^{\t Y})\\
&\cong\Ext^1_{\C^\op}(S^{\t X},S^{\t Y})\\
&\cong \Hom_{\De(\t X)}(\Irr_{\C^\op}(\t Y,\t X),\De(\t X))\\
&\cong \Hom_{\De(\t X)}(\Irr_{\C}(\t X,\t Y),\De(\t X)).
\end{align*}
The second isomorphism requires an extra argument, and the same is
used for the sixth. Let $\Om S_Y=\Rad_\C(-,Y)$, and observe that this
has no non-zero projective direct summand by the minimality of the
presentations in Lemma~\ref{le:ass}.  Then we have
\[\Ext^1_\C(S_Y,S_X)\cong\Hom_\C(\Om S_Y,S_X)\cong\uHom_\C(\Om S_Y,S_X).\]
The first isomorphism is from the proof of Lemma~\ref{le:irr}.  The
second follows from the fact that any non-zero morphism $\Om S_Y\to
S_X$ factoring through a projective factors through the projective
cover $\Hom_\C(-,X)\to S_X$. This means that $\Hom_\C(-,X)$ is a
direct summand of $\Om S_Y$, which has been excluded before.
\end{proof}

\subsection*{The Auslander--Reiten quiver}
The \emph{Auslander--Reiten quiver} of $\C$ is defined as follows. The
set of isomorphism classes of indecomposable objects in $\C$ form the
\emph{vertices}, and there is an \emph{arrow} $X\to Y$ if
$\Irr_\C(X,Y)\neq 0$. It is often convenient to identify an
indecomposable object with its isomorphism class.

The Auslander--Reiten quiver carries a \emph{valuation} which assigns
to each arrow $X\to Y$ the pair $(\d_{X,Y},\d'_{X,Y})$, where
\[\d_{X,Y}=\ell_{\De(X)}(\Irr_\C(X,Y))\quad\text{and}\quad
\d'_{X,Y}=\ell_{\De(Y)}(\Irr_\C(X,Y)).\]
Here, $\ell_A(M)$ denotes the length of an $A$-module $M$.

\begin{lem}\label{le:val}
Let $X,Y$ be indecomposable objects in $\C$ and suppose that $\t Y$ is
defined.  Then $\d'_{\t Y,X}=\d_{X,Y}$.
\end{lem}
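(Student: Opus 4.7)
The plan is to compute the common multiplicity of each indecomposable summand in the middle term of the almost split sequence ending at $Y$ in two different ways, using Lemma~\ref{le:AR} and its opposite-category version. Since $\t Y$ is defined, there is an almost split sequence $\t Y\xto{\a} E\xto{\b} Y$, in which $\b$ is minimal right almost split and $\a$ is minimal left almost split.

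First I would decompose $E=\coprod_{i=1}^{r} X_i^{n_i}$ into pairwise non-isomorphic indecomposable summands. Applying Lemma~\ref{le:AR} to the minimal right almost split morphism $\b\colon E\to Y$ in $\C$, an indecomposable $X$ satisfies $\Irr_\C(X,Y)\neq 0$ exactly when $X\cong X_i$ for some $i$, and in that case
\[n_i=\ell_{\De(X_i)}(\Irr_\C(X_i,Y))=\d_{X_i,Y}.\]

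Next I would apply Lemma~\ref{le:AR} in the opposite category $\C^{\op}$. Since $\a\colon\t Y\to E$ is minimal left almost split in $\C$, it becomes a minimal right almost split morphism $E\to\t Y$ in $\C^{\op}$, and the decomposition of $E$ into indecomposables is the same. Hence for an indecomposable $X$ one has $\Irr_{\C^{\op}}(X,\t Y)\neq 0$ iff $X\cong X_i$ for some $i$, and then
\[n_i=\ell_{\De_{\C^{\op}}(X_i)}(\Irr_{\C^{\op}}(X_i,\t Y)).\]
Now $\De_{\C^{\op}}(X_i)=\De(X_i)^{\op}$ and $\Irr_{\C^{\op}}(X_i,\t Y)=\Irr_\C(\t Y,X_i)$ as sets, so that the length of $\Irr_{\C^{\op}}(X_i,\t Y)$ as a right $\De(X_i)^{\op}$-module agrees with the length of $\Irr_\C(\t Y,X_i)$ as a left $\De(X_i)$-module. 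By the definition of $\d'$, this is precisely $\d'_{\t Y,X_i}$.

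Comparing the two computations gives $\d'_{\t Y,X_i}=n_i=\d_{X_i,Y}$ for every indecomposable summand $X_i$ of $E$. For an indecomposable $X$ that is not isomorphic to any $X_i$, the "if and only if" parts of Lemma~\ref{le:AR} in $\C$ and $\C^{\op}$ give $\Irr_\C(X,Y)=0$ and $\Irr_\C(\t Y,X)=0$, so both $\d_{X,Y}$ and $\d'_{\t Y,X}$ vanish. This handles all cases and establishes the lemma. No serious obstacle is expected; the only point requiring care is the correct translation of left/right module structures when passing to $\C^{\op}$, which is what forces the $\d'$ (rather than $\d$) to appear on the $\t Y$ side.
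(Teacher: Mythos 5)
Your proof is correct and is essentially the paper's own argument: both compute the multiplicity $n_i$ of $X$ in the middle term of the almost split sequence $\t Y\to E\to Y$ once via Lemma~\ref{le:AR} (giving $\d_{X,Y}$) and once via its dual applied to the minimal left almost split morphism (giving $\d'_{\t Y,X}$). Your version merely spells out the passage to $\C^{\op}$ and the degenerate case more explicitly than the paper does.
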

\begin{proof}
We have an almost split sequence $\t Y\to \bar X\to Y$ and $\d_{X,Y}$
counts the multiplicity of $X$ in a decomposition of $\bar X$ by Lemma~\ref{le:AR},
which equals $\d'_{\t Y,X}$ by the dual of Lemma~\ref{le:AR}.
\end{proof}

\begin{lem}\label{le:val'}
Let $k$ be a commutative ring and suppose that $\C$ is $k$-linear with
all morphism spaces of finite length over $k$.  Let $X,Y$ be
indecomposable objects in $\C$ and suppose that $\t Y$ is defined. Then
$\d_{\t Y,X}=\d'_{X,Y}$.
\end{lem}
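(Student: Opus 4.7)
The plan is to exploit the $k$-linear finiteness hypothesis to introduce $\ell_k$ as a common yardstick linking the different division rings $\De(X)$, $\De(Y)$, and $\De(\t Y)$. First I would observe that each $\De(Z)$ is a quotient of the finite-$k$-length ring $\End_\C(Z)$, hence is itself of finite positive $k$-length; similarly $\Irr_\C(X,Y)$, being a subquotient of $\Hom_\C(X,Y)$, has finite $k$-length. For any division ring $D$ of finite $k$-length and any finitely generated $D$-module $M$, a choice of $D$-basis yields
\[
\ell_k(M)=\ell_D(M)\cdot\ell_k(D),
\]
and this identity holds equally for left and right modules.

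Applying this to $\Irr_\C(X,Y)$ viewed as a $\De(X)$-$\De(Y)$-bimodule gives
\[
\d_{X,Y}\cdot\ell_k(\De(X))=\ell_k(\Irr_\C(X,Y))=\d'_{X,Y}\cdot\ell_k(\De(Y)),
\]
and the same applied to $\Irr_\C(\t Y,X)$ as a $\De(\t Y)$-$\De(X)$-bimodule gives
\[
\d_{\t Y,X}\cdot\ell_k(\De(\t Y))=\ell_k(\Irr_\C(\t Y,X))=\d'_{\t Y,X}\cdot\ell_k(\De(X)).
\]

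Next I would invoke Lemma~\ref{le:val}, which supplies $\d'_{\t Y,X}=\d_{X,Y}$; combining this with the two chains above forces $\ell_k(\Irr_\C(\t Y,X))=\ell_k(\Irr_\C(X,Y))$. The second ingredient is Lemma~\ref{le:tau}, which provides an isomorphism $\De(Y)\cong\De(\t Y)$ of division rings, so in particular $\ell_k(\De(Y))=\ell_k(\De(\t Y))$. Substituting these into the displayed identities I obtain
\[
\d_{\t Y,X}\cdot\ell_k(\De(Y))=\d'_{X,Y}\cdot\ell_k(\De(Y)),
\]
and since $\De(Y)$ is non-zero we have $\ell_k(\De(Y))\geq 1$, so cancellation yields $\d_{\t Y,X}=\d'_{X,Y}$.

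There is no genuine obstacle: the only care required is to verify that every length in sight is finite and that $\ell_k(\De(Y))$ is non-zero, both of which are immediate. The conceptual point is that the $k$-linearity hypothesis — precisely what was absent in the companion Lemma~\ref{le:val} — is exactly what converts the symmetry of left-length over $\De(\t Y)$ and right-length over $\De(Y)$ into a cancellation identity.
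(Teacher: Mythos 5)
Your proof is correct and follows essentially the same route as the paper: both arguments combine the multiplicativity $\ell_k(M)=\ell_D(M)\cdot\ell_k(D)$ for modules over the division rings $\De(-)$ with the identity $\d'_{\t Y,X}=\d_{X,Y}$ from Lemma~\ref{le:val} and the isomorphism $\De(Y)\cong\De(\t Y)$ from Lemma~\ref{le:tau}. The paper merely organises the same cancellation as a single chain of equalities computing $\ell_{\De(Y)}(\Irr_\C(X,Y))=\ell_{\De(\t Y)}(\Irr_\C(\t Y,X))$.
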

\begin{proof}
Using the identity $\d'_{\t Y,X}=\d_{X,Y}$ from Lemma~\ref{le:val} and
the isomorphism $\De(Y)\cong\De(\t Y)$ from Lemma~\ref{le:tau}, one
computes
\begin{align*}
\ell_{\De(Y)}(\Irr_\C(X,Y))&=\ell_k(\Irr_\C(X,Y))\cdot\ell_k(\De(Y))^{-1}\\
&=\ell_{\De(X)}(\Irr_\C(X,Y))\cdot\ell_k(\De(X))^{}\cdot\ell_k(\De(Y))^{-1}\\
&=\ell_{\De(X)}(\Irr_\C(\t Y,X))\cdot\ell_k(\De(X))^{}\cdot\ell_k(\De(Y))^{-1}\\
&=\ell_k(\Irr_\C(\t Y,X))\cdot\ell_k(\De(\t Y))^{-1}\\
&=\ell_{\De(\t Y)}(\Irr_\C(\t Y,X)).\qedhere
\end{align*}
\end{proof}

\subsection*{The repetition}
Let $\Ga$ be a quiver without loops. Then a new quiver $\bbZ\Ga$ is
defined as follows. The set of vertices is $\bbZ\times\Ga_0$. For each
arrow $x\to y$ in $\Ga$ and each $n\in\bbZ$ there is an arrow
$(n,x)\to (n,y)$ and an arrow $(n-1,y)\to (n,x)$ in $\bbZ\Ga$.  The
quiver $\bbZ\Ga$ is a translation quiver \cite{Ri1980} with translation
$\t$ defined by $\t (n,x)=(n-1,x)$ for each vertex $(n,x)$.

\end{appendix}

\end{document}